\documentclass{amsart}
\pagestyle{plain}
\setlength{\parskip}{0in}
\setlength{\textwidth}{6.8in}
\setlength{\topmargin}{-.5in}
\setlength{\textheight}{9.3in}
\setlength{\parindent}{.25in}
\setlength{\oddsidemargin}{-.7cm}
\setlength{\evensidemargin}{-.7cm}

\usepackage{color}
\usepackage{amsmath}
\usepackage{amsthm}
\usepackage{amstext}
\usepackage{amssymb}
\usepackage{amsfonts}
\usepackage{graphicx}
\usepackage{young}
\usepackage{mathdots}
\usepackage{multicol}
\usepackage{mathrsfs}
\usepackage{stmaryrd}
\usepackage[all]{xy}
\usepackage{mathabx}
\usepackage{tikz}
\usepackage{soul}
\usepackage{mathtools}

\newcommand{\verteq}{\rotatebox{90}{$\,=$}}
\newcommand{\equalto}[2]{\underset{\scriptstyle\overset{\mkern4mu\verteq}{#2}}{#1}}

\theoremstyle{plain}

\theoremstyle{definition}
\newtheorem{theorem}{Theorem}[section]
\newtheorem{remark}[theorem]{Remark}

\newtheorem{lemma}[theorem]{Lemma}
\newtheorem{definition}[theorem]{Definition}

\newtheorem{problem}[theorem]{Problem}

\newtheorem{example}[theorem]{Example}

\newtheorem{corollary}[theorem]{Corollary}

\DeclareMathAlphabet{\mathpzc}{OT1}{pzc}{m}{it}

\begin{document}
\author{Alexander Garver and Gregg Musiker}
\title{On Maximal Green Sequences for Type $\mathbb{A}$ Quivers}
\begin{abstract}
Given a framed quiver, i.e. one with a frozen vertex associated to each mutable vertex, there is a concept of \emph{green mutation}, as introduced by Keller.  Maximal sequences of such mutations, known as \emph{maximal green sequences}, are important in representation theory and physics as they have numerous applications, including the computations of spectrums of BPS states, Donaldson-Thomas invariants, tilting of hearts in derived categories, and quantum dilogarithm identities.  In this paper, we study such sequences and construct a maximal green sequence for every quiver mutation-equivalent to an orientation of a type $\mathbb{A}$ Dynkin diagram.   
\end{abstract}
\maketitle

\tableofcontents

\section{Introduction}

A very important problem in cluster algebra theory, with connections to polyhedral combinatorics and the enumeration of BPS states in string theory, is to determine when a given quiver has a maximal green sequence.  In particular, it is open to decide which quivers arising from triangulations of surfaces admit a maximal green sequence, although progress for surfaces has been made in \cite{L}, \cite{Bucher}, \cite{BucherMills} and in the physics literature in \cite{ACCERV}.  In \cite{ACCERV}, they give heuristics for exhibiting maximal green  sequences for quivers arising from triangulations of surfaces with boundary and present examples of this for spheres with at least 4 punctures and tori with at least 2 punctures. They write down a particular triangulation of such a surface and show that the quiver defined by this triangulation has a maximal green sequence. In \cite{Bucher, BucherMills}, this same approach is used on surfaces of any genus with at least 2 punctures. In \cite{L}, it is shown that there do not exist maximal green sequences for a quiver arising from any triangulation of a closed once-punctured genus $g$ surface. It is still unknown the exact set of surfaces with the property that each of its triangulations defines a quiver admitting a maximal green sequence. %, 

%It is still unknown the exact set of surfaces each of whose triangulations define a quiver admitting a maximal green sequence.

Outside the class of quivers defined by triangulated surfaces there has also been progress in proving that certain quivers do not have maximal green sequences. In \cite{BDP}, it is shown that if a quiver has non-degenerate Jacobi-infinite potential, then the quiver has no maximal green sequences. This is used in \cite{BDP} to show that a certain McKay quiver has no maximal green sequences, and in \cite{Seven1} it is shown that the $\mathbb{X}_7$ quiver has no maximal green sequences.  Other work \cite{Muller} illustrates that it is possible to have two mutation-infinite quivers that are mutation equivalent to one another where only one of the two admits a maximal green sequence.

Even for cases where the existence of maximal green sequences is known (e.g. for \emph{quivers of type} $\mathbb{A}$), the problem of exhibiting, classifying or counting maximal green sequences  has been challenging and serves as our motivation.  By a quiver of type $\mathbb{A}$, we mean any quiver that is \emph{mutation-equivalent} to an orientation of a type $\mathbb{A}$ Dynkin diagram.  In the case where $Q$ is acyclic, one can find a maximal green  sequence whose length is the number of vertices of $Q$,  by mutating at sources and iterating until all vertices have been mutated exactly once. In general, maximal green sequences must have length at least the number of vertices of $Q$. However, even for the smallest non-acyclic quiver, i.e. the oriented 3-cycle (of type $\mathbb{A}_3$), a shortest maximal green  sequence is of length 4. (While we were in the process of revising this paper, it was shown in \cite{CDRSW} that the shortest possible length of a maximal green sequence for a quiver $Q$ of type $\mathbb{A}_n$ is $n+t$ where $t = \#\{\text{3-cycles of $Q$}\}$.  See Remark~\ref{cormierresult1} and Sections~\ref{Sec:OtherSurf} and \ref{Sec:Enum} for more details.) With a goal of gaining a better understanding of such sequences, in this paper we \emph{explicitly} construct a maximal green sequence for every quiver of type $\mathbb{A}$. As any triangulation of the disk with $n+3$ marked points on the boundary defines a quiver of type $\mathbb{A}_n$, our construction shows that the disk belongs to the set of surfaces each of whose triangulations define a quiver admitting a maximal green sequence. We remark that the latter result has also been proved in \cite{CDRSW} by constructing maximal green sequences of type $\mathbb{A}_n$ quivers of shortest possible length. Additionally, the maximal green sequences constructed in \cite{CDRSW} are almost never the same as the maximal green sequences constructed in this paper.

% and describing the collection of maximal green sequences is mysterious.

\vspace{0.5em}

In Section \ref{Sec:Prelim}, we begin with background on quivers and their mutations.  This section includes the definition of \emph{maximal green sequences}, which is our principal object of study in this paper.
Section \ref{Sec:DirectSum} describes how to decompose quivers into \emph{direct sums} of strongly connected components, 
which we call irreducible quivers. We remark that this definition of direct sum of quivers, which is based on a quiver gluing rule from \cite{ACCERV}, coincides with the definition of a triangular extension of quivers appearing in \cite{A}. Using this notion of direct sums in Theorem~\ref{tcolordirsum}, we show that for certain direct sums of quivers, to 
construct a maximal green sequence, it suffices to construct a maximal green  sequence for each of their 
irreducible components. We refer to the class of such quivers for which Theorem~\ref{tcolordirsum} holds as $t$\emph{-colored direct sums} of quivers (see Definition~\ref{dirsum}). In Section~\ref{Sec:SurfTriang}, we show that almost all quivers arising from triangulated surfaces (with $1$ connected component) which are a direct sum of at least 2 irreducible components are in fact a $t$-colored directed sum.

For type $\mathbb{A}$ quivers, irreducible quivers have an especially nice form as trees of $3$-cycles, as described 
by Corollary \ref{Cor:Irr}.  This allows us to restrict our attention to \emph{signed irreducible} quivers of \emph{type $\mathbb{A}$}, which are defined in and studied in Section \ref{Sec:Signed}. We then construct a special mutation sequence for every signed irreducible quiver of type $\mathbb{A}$ in Section \ref{Sec:MutSeq}, which we call an \emph{associated mutation sequence}.  This brings us to the main theorem of the paper, Theorem \ref{main2}, which states that this associated mutation sequence is a maximal green sequence. Section \ref{Sec:MutSeq} also highlights how the results of Section \ref{Sec:DirectSum} can be combined with Theorem \ref{main2} to get maximal green sequences for \emph{any} quiver of type $\mathbb{A}$ (see Corollary~\ref{anytypeA}).

The proof of Theorem~\ref{main2} is somewhat involved. The proof of Theorem \ref{main2} essentially follows from two important lemmas (see Lemma~\ref{newmainlemma} and Lemma~\ref{Lemma:mukR}). Our proof begins by attaching \emph{frozen vertices} to a signed irreducible type $\mathbb{A}$ quiver $Q$ to get a \emph{framed quiver} $\widehat{Q}$ (see Section \ref{Sec:Prelim} for more details).  We then apply the associated mutation sequence $\underline{\mu}$ alluded to above, which is constructed in Section \ref{Sec:MutSeq}, but decompose it into certain subsequences as $\underline{\mu} =  \underline{\mu}_n \circ \cdots \circ \underline{\mu}_1\circ \underline{\mu}_0$ and apply each mutation subsequence $\underline{\mu}_k$ one after the other.  In Lemma \ref{newmainlemma} we explicitly describe, for the resulting intermediate quivers, the full subquiver that will be affected by the next iteration of mutations $\underline{\mu}_k$.  We will refer to this full subquiver of $\underline{\mu}_{k-1}\circ \cdots \circ \underline{\mu}_1\circ \underline{\mu}_0(\widehat{Q})$ affected by $\underline{\mu}_k$ as $\overline{R}_{k}$.  Lemma~\ref{Lemma:mukR} then explicitly describes how each of these full subquivers, $\overline{R}_k$, is affected by the mutation sequence $\underline{\mu}_k$.  Together these lemmas lead us to conclude that the associated mutation sequence $\underline{\mu} = \underline{\mu}_{n}\circ \cdots \circ \underline{\mu}_1\circ \underline{\mu}_0$ is a maximal green sequence.

Furthermore, these two lemmas imply that the final quiver $\underline{\mu}_{n}\circ \cdots \circ \underline{\mu}_1\circ \underline{\mu}_0(\widehat{Q})$ is isomorphic (as a directed graph) to $\widecheck{Q}$, the \emph{co-framed} quiver where the directions of arrows between vertices of $Q$ and frozen vertices have all been reversed.
In particular, such an isomorphism is known as a \emph{frozen isomorphism} since it permutes the vertices of $Q$ while leaving the frozen vertices fixed. We refer to this permutation, of vertices of $Q$, as the \emph{permutation induced} by a maximal green sequence (we refer the reader to Section~\ref{Sec:Prelim} for precise definitions of these notions). One of the benefits of proving Theorem~\ref{main2} using the two lemmas mentioned in the previous paragraph is that we exactly describe the permutation that is induced by an associated mutation sequence of a signed irreducible quiver of type $\mathbb{A}$. (See the last paragraph in Section~\ref{Sec:Prelim} and Definition \ref{def:associatedperm}.)  This is a result that may be of independent interest.

Finally, Section \ref{Sec:Conc} ends with further remarks and ideas for future directions, including extensions to quivers arising from triangulations of surfaces other than the disk with marked points on the boundary.

\vspace{1em}

\noindent {\bf Acknowledgements.~} The authors would like to thank T. Br\"{u}stle, M. Del Zotto, B. Keller, S. Ladkani, R. Patrias, V. Reiner, and H. Thomas for useful discussions.  We also thank the referees for their careful reading and numerous suggestions.  The authors  were supported by NSF Grants DMS-1067183, DMS-1148634, and DMS-1362980.

\section{Preliminaries and Notation} \label{Sec:Prelim}

The reader may find excellent surveys on the theory of cluster algebras and maximal green sequences in \cite{BDP,K1}.  For our purposes, we recall a few of the relevant definitions.

A \textbf{quiver} $Q$ is a directed graph without loops or 2-cycles. In other words, $Q$ is a 4-tuple $((Q)_0,(Q)_1,s,t)$, where $(Q)_0 = [M] := \{1,2, \ldots, M\}$ is a set of \textbf{vertices}, $(Q)_1$ is a set of \textbf{arrows}, and two functions $s, t:(Q)_1 \to (Q)_0$ are defined so that for every $\alpha \in (Q)_1$, we have $s(\alpha) \xrightarrow{\alpha} t(\alpha)$. An \textbf{ice quiver} is a pair $(Q,F)$ with $Q$ a quiver and $F \subset (Q)_0$ \textbf{frozen vertices} with the additional restriction that any $i,j \in F$ have no arrows of $Q$ connecting them. We refer to the elements of $(Q)_0\backslash F$ as \textbf{mutable vertices}. By convention, we assume $(Q)_0\backslash F = [N]$ and $F = [N+1,M] := \{N+1, N+2, \ldots, M\}.$ Any quiver $Q$ can be regarded as an ice quiver by setting $Q = (Q, \emptyset)$.

The {\bf mutation} of an ice quiver $(Q,F)$ at a mutable vertex $k$, denoted $\mu_k$, produces a new ice quiver $(\mu_kQ,F)$ by the three step process:

(1) For every $2$-path $i \to k \to j$ in $Q$, adjoin a new arrow $i \to j$.

(2) Reverse the direction of all arrows incident to $k$ in $Q$.

(3) Delete any $2$-cycles created during the first two steps.

\noindent We show an example of mutation in Figure~\ref{example ofmutation} depicting the mutable (resp. frozen) vertices in black (resp. blue).

%\vspace{-.1in}
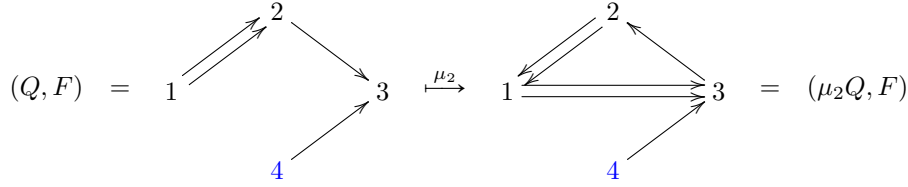
\begin{figure}
$$\begin{array}{c c c c c c c c c}
\raisebox{-.4in}{$(Q,F)$} & \raisebox{-.4in}{=} & {\begin{xy} 0;<1pt,0pt>:<0pt,-1pt>:: 
(0,30) *+{1} ="0",
(40,0) *+{2} ="1",
(80,30) *+{3} ="2",
(40,60) *+{\textcolor{blue}{4}} ="3",
"0", {\ar@<-.5ex>"1"},
"0", {\ar@<.5ex>"1"},
%"0", {\ar"1"},
"1", {\ar"2"},
"3", {\ar"2"},
\end{xy}} & \raisebox{-.4in}{$\stackrel{\mu_2}{\longmapsto}$} & {\begin{xy} 0;<1pt,0pt>:<0pt,-1pt>:: 
(0,30) *+{1} ="0",
(40,0) *+{2} ="1",
(80,30) *+{3} ="2",
(40,60) *+{\textcolor{blue}{4}} ="3",
%"0", {\ar"1"},
"2", {\ar"1"},
"0", {\ar@<-.5ex>"2"},
"0", {\ar@<.5ex>"2"},
"1", {\ar@<-.5ex>"0"},
"1", {\ar@<.5ex>"0"},
"3", {\ar"2"},
\end{xy}} & \raisebox{-.4in}{=} & \raisebox{-.4in}{$(\mu_2Q,F)$}
\end{array}
$$\caption{An example of quiver mutation.}\label{example ofmutation}\end{figure}

\noindent Since we will focus on quiver mutation in this paper, it will be useful to define a notation for arrows obtained by reversing their direction. Given $\alpha \in (Q)_1$ where $s(\alpha) \stackrel{\alpha}{\rightarrow} t(\alpha)$, formally define $\alpha^{\text{op}}$ to be the arrow where $s(\alpha^\text{op}) = t(\alpha)$ and $t(\alpha^\text{op}) = s(\alpha).$ With this notation, step (2) in the definition of mutation can be rephrased as: if $\alpha \in (Q)_1$ and $s(\alpha) = k$ or $t(\alpha) = k$, replace $\alpha$ with $\alpha^\text{op}$.

The information of an ice quiver can be equivalently described by its (skew-symmetric) \textbf{exchange matrix}. Given $(Q,F),$ we define $B = B_{(Q,F)} = (b_{ij}) \in \mathbb{Z}^{N\times M} := \{N \times M \text{ integer matrices}\}$ by $b_{ij} := \#\{(i \stackrel{\alpha}{\to} j) \in (Q)_1\} - \#\{(j \stackrel{\alpha}{\to} i) \in (Q)_1\}.$ Furthermore, ice quiver mutation can equivalently be defined  as \textbf{matrix mutation} of the corresponding exchange matrix. Given an exchange matrix $B \in \mathbb{Z}^{N\times M}$, the \textbf{mutation} of $B$ at $k \in [N]$, also denoted $\mu_k$, produces a new exchange matrix $\mu_k(B) = (b^\prime_{ij})$ with entries
\[
b^\prime_{ij} := \left\{\begin{array}{ccl}
-b_{ij} & : & \text{if $i=k$ or $j=k$} \\
b_{ij} + \text{sgn}(b_{ik})[b_{ik}b_{kj}]_+ & : & \text{otherwise}
\end{array}\right.
\]
\begin{flushleft}where $[x]_+ = \max(x,0)$. For example, the mutation of the ice quiver above (here $M=4$ and $N=3$) translates into the following matrix mutation. Note that mutation of matrices {(or of ice quivers)} is an involution (i.e. $(\mu_k\circ\mu_k)(B) = B$).\end{flushleft}
\[
\begin{array}{c c c c c c c c c c}
B_{(Q,F)} & = & \left[\begin{array}{c c c | r}
0 & 2 & 0 & 0 \\
-2 & 0 & 1 & 0\\
0 & -1 & 0 & -1\\
\end{array}\right]
& \stackrel{\mu_2}{\longmapsto} &
\left[\begin{array}{c c c | r}
0 & -2 & 2 & 0 \\
2 & 0 & -1 & 0\\
-2 & 1 & 0 & -1\\
\end{array}\right] 
& = & B_{(\mu_2Q,F)}.
\end{array}
\]

In this paper, we focus on successively applying mutations to a fixed ice quiver. As such, if $(Q,F)$ is a given ice quiver we define an \textbf{admissible sequence} of $(Q,F)$, denoted $\textbf{i} = (i_1,\ldots, i_d),$ to be a sequence of mutable vertices of $(Q,F)$ such that $i_j \neq i_{j+1}$ for all $j \in [d-1]$. An admissible sequence $\textbf{i}=(i_1,\ldots, i_d)$ also gives rise to a \textbf{mutation sequence}, which we define to be an expression $\underline{\mu} = \mu_{i_d} \circ \cdots \circ \mu_{i_1}$ with $i_j \neq i_{j+1}$ for all $j \in [d-1]$ that maps an ice quiver $(Q,F)$ to a mutation-equivalent one\footnote{In the sequel, we will identify an admissible sequence with the mutation sequence it defines.}. Let Mut($(Q,F)$) denote the collection of ice quivers obtainable from $(Q,F)$ by a mutation sequence of \textbf{finite length} where the \textbf{length} of a mutation sequence is defined to be $d$, the number of vertices appearing in the associated admissible sequence  $\textbf{i} = (i_1, \ldots, i_d)$.  Given a mutation sequence $\underline{\nu}$ of $\overline{Q} \in \text{Mut}(\widehat{Q})$ we define the $\textbf{support}$ of $\underline{\nu}$, denoted $\text{supp}(\underline{\nu})$, to be the set of mutable vertices of $\overline{Q}$ 
appearing in the admissible sequence which gives rise to $\underline{\nu}$.

Given a quiver $Q$, we focus on successively mutating the framed quiver of Q, which we now define. Following references such as \cite[Section 2.3]{BDP}, given a quiver $Q$, we define its \textbf{framed} (resp. \textbf{coframed}) quiver to be the ice quiver $\widehat{Q}$ (resp. $\widecheck{Q}$) where $(\widehat{Q})_0\ (= (\widecheck{Q})_0) := (Q)_0 \sqcup [N+1, 2N]$, $F = [N+1, 2N]$, and $(\widehat{Q})_1 := (Q)_1 \sqcup \{i \to N+i: i \in [N]\}$ (resp. $(\widecheck{Q})_1 := (Q)_1 \sqcup \{N+i \to i: i \in [N]\}$). We will denote elements of Mut($\widehat{Q}$) by $\overline{Q}$.  In the sequel, we will often write frozen vertices $N+ i$  of $\overline{Q} \in \text{Mut(}\widehat{Q}\text{)}$ as $i^\prime.$ Thus $F = [N]^\prime$ where $S^\prime := \{x^\prime : x \in S\}$ for any $S \subset (Q)_0$. In this paper, we consider ice quivers $\overline{Q} \in \text{Mut}(\widehat{Q})$ up to \textbf{frozen isomorphism} (i.e. an isomorphism of quivers that fixes frozen vertices). Such an isomorphism is equivalent to a simultaneous permutation of the rows and of the first $N$ columns of the exchange matrix $B_{\overline{Q}}$.

A mutable vertex $i$ of an ice quiver $\overline{Q} \in \text{Mut}(\widehat{Q})$ is said to be {\bf green} (resp. {\bf red}) if all arrows of $\overline{Q}$ connecting an element of $[N]^\prime$ and $i$ point away from (resp. towards) $i$. Note that all mutable vertices of $\widehat{Q}$ are green and all vertices of $\widecheck{Q}$ are red.  By the Sign-Coherence of {\bf c}- and {\bf g}-vectors for cluster algebras \cite[Theorem 1.7]{DWZ2}, it follows that given any $\overline{Q} \in \text{Mut}(\widehat{Q})$ each mutable vertex of $\overline{Q}$ is either red or green.

Let $\underline{\mu} = \mu_{i_d}\circ\cdots\circ \mu_{i_1}$ be a mutation sequence of $\widehat{Q}$.
Define $\{\overline{Q}{(k)}\}_{0\le k \le d}$ to be the sequence of ice quivers where $\overline{Q}{(0)} := \widehat{Q}$ and $\overline{Q}{(j)} := (\mu_{i_j} \circ \cdots \circ \mu_{i_1})(\widehat{Q})$.  (In particular, throughout this paper, we apply any sequence of mutations in order from right-to-left.)  A {\bf green sequence} of $Q$ is an {admissible sequence $\textbf{i} = (i_1,i_2,\ldots i_d)$} of $\widehat{Q}$ such that ${i_j}$ is a green vertex of 
$\overline{{Q}}{(j-1)}$ for each $1 \leq j \leq d$. The admissible sequence $\textbf{i}$ is a {\bf maximal green sequence} of $Q$ if it is a green sequence of $Q$ such that in the final quiver $\overline{Q}{(d)}$, the vertices $1,2,\dots, N$ are all red\footnote{Note that since we identify an admissible sequence with the mutation sequence defined by it, we have that maximal green sequences are identified with maximal green mutation sequences, as they are referred to in \cite{Qiu}.}.  In other words, $\overline{Q}{(d)}$ contains no green vertices.  Following \cite{BDP}, we let $\text{green}(Q)$ denote the set of maximal green sequences of ${Q}$\footnote{By identifying maximal green sequences with maximal green mutation sequences, we abuse notation and write an element of $\text{green}(Q)$ either as an admissible sequence $\textbf{i} = (i_1,\ldots, i_d)$ or as its corresponding mutation sequence $\underline{\mu} = \mu_{i_d} \circ \cdots \circ \mu_{i_1}$.}.

Proposition 2.10 of \cite{BDP} shows that given any maximal green sequence $\underline{\mu}$ of $Q$, one has a frozen isomorphism $\overline{Q}{(d)} \cong \widecheck{Q}$. Such an isomorphism amounts to a permutation of the {mutable} vertices of $\widecheck{Q}$, (i.e. $\overline{Q}{(d)} = \widecheck{Q \sigma}$ for some permutation $\sigma \in \mathfrak{S}_{{N}}$ where $\widecheck{Q\sigma}$ is defined by the exchange matrix $B\sigma = B_{\widecheck{Q}}\sigma$ that has entries $(B\sigma)_{i,j} {=} B_{i\cdot\sigma, j\cdot\sigma}$).  We call this the {\bf permutation induced} by $\underline{\mu}$. Note that we can regard $\sigma$ as an element $\mathfrak{S}_{2N}$ where $i\cdot\sigma = i$ for any $i \in [N+1,2N].$

\section{Direct Sums of Quivers} \label{Sec:DirectSum}
In this section, we define a direct sum of quivers based on notation appearing in \cite[Section 4.2]{ACCERV}. We also show that, under certain restrictions, if a quiver $Q$ can be written as a direct sum of quivers where each summand has a maximal green sequence, then the maximal green sequences of the summands can be concatenated in some way to give a maximal green sequence for $Q$. Throughout this section, we let $(Q_1,F_1)$ and $(Q_2,F_2)$ be finite ice quivers with $N_1$ and $N_2$ vertices, respectively. Furthermore, we assume $(Q_1)_0\backslash F_1 = [N_1]$ and $(Q_2)_0\backslash F_2 = [N_1 + 1, N_1 + N_2].$

\begin{definition}\label{dirsum}
Let $(a_1,\ldots,a_k)$ denote a $k$-tuple of elements from $(Q_1)_0\backslash F_1$ and $(b_1,\ldots, b_k)$ a $k$-tuple of elements from $(Q_2)_0\backslash F_2$. (By convention, we assume that the  $k$-tuple $(a_1,\ldots, a_k)$ is ordered so that $a_i \le a_j$ if $i < j$ unless stated otherwise.)  Additionally, let $({R}_1,F_1) \in \text{Mut}(({Q_1},F_1))$ and $(R_2, F_2) \in \text{Mut}(({Q_2},F_2))$. We define the \textbf{direct sum} of $({R}_1,F_1)$ and $(R_2,F_2)$, denoted $(R_1,F_1)\oplus_{(a_1,\ldots, a_k)}^{(b_1,\ldots,b_k)}(R_2,F_2)$, to be the ice quiver with vertices$$\left((R_1,F_1)\oplus_{(a_1,\ldots, a_k)}^{(b_1,\ldots,b_k)}(R_2,F_2)\right)_0 := ({R}_1)_0 \sqcup ({R}_2)_0 = ({Q}_1)_0 \sqcup ({Q}_2)_0 = [N_1+N_2] \sqcup F_1 \sqcup F_2$$ and arrows $$\left((R_1,F_1)\oplus_{(a_1,\ldots, a_k)}^{(b_1,\ldots,b_k)}(R_2,F_2)\right)_1 := (R_1,F_1)_1 \sqcup (R_2,F_2)_1 \sqcup \left\{a_i \stackrel{\alpha_i}{\to} b_i: i \in [k]\right\}.$$ Observe that we have the identification of ice quivers $$\widehat{Q_1\oplus_{(a_1,\ldots, a_k)}^{(b_1,\ldots,b_k)}Q_2} \cong \widehat{Q_1}\oplus_{(a_1,\ldots, a_k)}^{(b_1,\ldots,b_k)}\widehat{Q_2},$$ 
where the total number of vertices $M=2(N_1+N_2)$ in both cases. 

We say that $(R_1,F_1)\oplus_{(a_1,\ldots, a_k)}^{(b_1,\ldots,b_k)}(R_2,F_2)$ is a \textbf{$t$-colored direct sum} if $t = \#\{\text{distinct elements of $\{a_1\ldots, a_k\}$}\}$ and there does not exist $i$ and $j$ such that $$\#\{a_i \stackrel{\alpha}{\to}b_j\} \ge 2.$$ 
\end{definition}

\begin{remark}
Our definition of the direct sum of two quivers coincides with the definition of a \textbf{triangular extension} of two quivers introduced by C. Amiot in \cite{A}, except that we consider quivers as opposed to quivers with potential.  We thank S. Ladkani for bringing this to our attention.  He uses this terminology to study the representation theory of a related class of quivers with potential, called class $\mathcal{P}$ by M. Kontsevich and Y. Soibelman \cite[Section 8.4]{KS}.
\end{remark}%################

\begin{remark}
The direct sum of two ice quivers is a non-associative operation as is shown in Example~\ref{directsum1}.
\end{remark}

\begin{definition}
We say that a quiver $Q$ is \textbf{irreducible} if $$Q = Q_1\oplus_{(a_1,\ldots, a_k)}^{(b_1,\ldots,b_k)}Q_2$$ for some $k$-tuple $(a_1\ldots,a_k)$ on $(Q_1)_0$ and some $k$-tuple $(b_1,\ldots, b_k)$ on $(Q_2)_0$ implies that $Q_1$ or $Q_2$ is the empty quiver. Note that we define irreducibility only for quivers rather than for ice quivers because we later only study reducibility when $F=\emptyset$. 
\end{definition}

\begin{example}\label{directsum1}
Let $Q$ denote the quiver shown in Figure~\ref{Fig_dirsumexample}. Define $Q_1$ to be the full subquiver of $Q$ on the vertices $1,\ldots, 4$, $Q_2$ to be the full subquiver of $Q$ on the vertices $6,\ldots, 11$, and $Q_3$ to be the full subquiver of $Q$ on the vertex 5. Note that $Q_1, Q_2,$ and $Q_3$ are each irreducible. Then $$Q = Q_1\oplus_{(1,1,1,3,4,4)}^{(5,8,11,8,9,11)}Q_{23}$$ where $Q_{23} = Q_2\oplus_{(6)}^{(5)}Q_3$ so $Q$ is a 3-colored direct sum. On the other hand, we could write $$Q = Q_{12} \oplus_{(1,6)}^{(5,5)}Q_3$$ where $Q_{12} = Q_1\oplus_{(1,1,3,4,4)}^{(8,11,8,9,11)}Q_2$ so $Q$ is a 2-colored direct sum. Additionally, note that $$Q_1\oplus_{(1,1,1,3,4,4)}^{(5,8,11,8,9,11)}Q_{23} = Q_1\oplus_{(1,1,1,3,4,4)}^{(5,8,11,8,9,11)}\left(Q_2\oplus_{(6)}^{(5)}Q_3\right) \neq \left(Q_1\oplus_{(1,1,1,3,4,4)}^{(5,8,11,8,9,11)}Q_2\right)\oplus_{(6)}^{(5)}Q_3$$
where the last equality does not hold because $Q_1\oplus_{(1,1,1,3,4,4)}^{(5,8,11,8,9,11)}Q_2$ is not defined as $5$ is not a vertex of $Q_2$. This shows that the direct sum of two quivers, in the sense of this paper, is not associative.
\end{example}

\begin{figure}
$$\begin{xy} 0;<1pt,0pt>:<0pt,-1pt>:: 
(25,25) *+{1} ="0",
(0,75) *+{2} ="1",
(50,75) *+{3} ="2",
(25,125) *+{4} ="3",
(150,0) *+{5} ="4",
(200,25) *+{6} ="5",
(150,50) *+{7} ="6",
(200,75) *+{8} ="7",
(150,100) *+{9} ="8",
(200,125) *+{10} ="9",
(150,150) *+{11} ="10",
"1", {\ar"0"},
"0", {\ar"2"},
"0", {\ar"4"},
"0", {\ar"7"},
"0", {\ar"10"},
"3", {\ar"1"},
"2", {\ar"3"},
"2", {\ar"7"},
"3", {\ar"8"},
"3", {\ar"10"},
"5", {\ar"4"},
"6", {\ar"5"},
"5", {\ar"7"},
"7", {\ar"6"},
"7", {\ar"8"},
%"8", {\ar"6"},
"8", {\ar"9"},
"10", {\ar"8"},
"9", {\ar"10"},
\end{xy}$$
\caption{The quiver $Q$ used in Example \ref{directsum1}.}
\label{Fig_dirsumexample}
\end{figure}
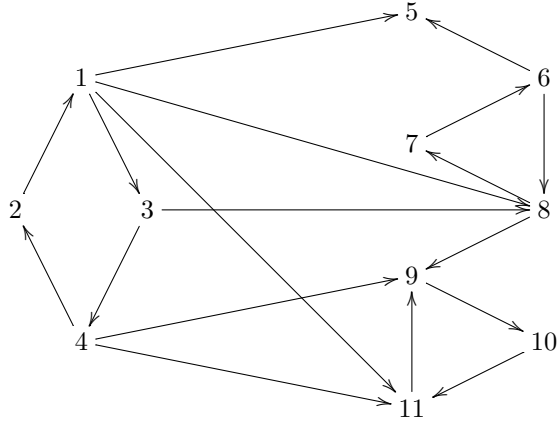

Our next goal is to prove that $Q$ has a maximal green sequence if $Q$ is a $t$-colored direct sum and each of its summands has a maximal green sequence (see Proposition~\ref{tcolordirsum}). Before proving this, we introduce a standard form of $t$-colored direct sums of ice quivers with which we will work:

\begin{eqnarray}
(R,F) & = & \widehat{Q_1}\oplus_{(a_1,\ldots, a_1,\ldots, a_t, \ldots, a_t)}^{(b_{1}^{(1)},\ldots,b_{r_1}^{(1)},\ldots, b_{1}^{(t)}, \ldots, b_{r_t}^{(t)})}\overline{Q}_2 \label{Qeqn}
\end{eqnarray}

\noindent where $\overline{Q}_2 \in \text{Mut}(\widehat{Q_2})$,  $a_1,\ldots, a_t \in (Q_1)_0\backslash [N_1]^\prime$, $b_1^{(1)},\ldots, b_{r_1}^{(1)},\ldots, b_1^{(t)}, \ldots, b_{r_t}^{(t)} \in (Q_2)_0\backslash [N_2]^\prime$, and $\underline{\mu}$ is a fixed mutation sequence $\mu_{i_d}\circ\cdots\circ \mu_{i_1}$ where $\text{supp}(\underline{\mu}) \subset (Q_1)_0$.

We consider the sequence of mutated quivers $(R^{(k)},F)$, for each $k \in [0,d]$, where $R^{(k)} := \mu_{i_k} \circ \cdots \circ \mu_{i_1}R$. By convention, $k = 0$ implies that the empty mutation sequence has been applied to $(R,F)$ so $R^{(0)} = R$.  
For every $k \in [0,d]$, we define $\overline{Q}_1(k) := (\mu_{i_k}\circ \cdots \circ \mu_{i_1})(\widehat{Q}_1)$ and the following set of arrows $$A(k) := \left\{\alpha \in (R^{(k)},F)_1 : \begin{array}{c}
s(\alpha) \mathrm{~or~} t(\alpha) \in (\overline{Q_1}(k))_0\setminus [N_1]^\prime \text{~and the other end of} \\ \alpha \text{~is in } \{a_1^\prime,\ldots,a_t^\prime\} \cup \{b_1^{(1)},\ldots, b_{r_1}^{(1)},\ldots, b_1^{(t)}, \ldots, b_{r_t}^{(t)}\}  \end{array} \right\}.$$

Observe that the sets $A(k)$ only contain arrows
in the partially mutated quivers which have exactly one of their two ends incident to a vertex in $(Q_1)_0$.  The next lemma illustrates how the set of arrows $A(k-1)$ transforms into the set $A(k)$.

\begin{lemma} \label{lem:inductive_color}
If $(i \stackrel{\alpha}{\rightarrow} j) \in A(k)$, but $\alpha, \alpha^\text{op} \not \in A(k-1)$, then there is a 2-path $i \stackrel{\alpha_1}{\rightarrow} i_k \stackrel{\alpha_2}{\rightarrow} j$ in $(R^{(k-1)},F)$ and exactly one of the arrows $\alpha_1, \alpha_2 \in (R^{(k-1)},F)_1$ belongs to $A(k-1).$
\end{lemma}
\begin{proof}
By the definition of quiver mutation, the arrow $(i \stackrel{\alpha}{\rightarrow} j) \in A(k) \subset (R^{(k)},F)_1 = (\mu_{i_k}R^{(k-1)},F)_1$ was originally in $A(k-1)$, was the reversal of an arrow originally in $A(k-1)$, or resulted from a $2$-path.

By hypothesis, we must be in the last case.  By the definition of $A(k)$, either the source or target of $\alpha$ is in 
$(\overline{Q_1}(k))_0\setminus[N_1^\prime]$ but not both.  Hence the $2$-path 
$i \stackrel{\alpha_1}{\rightarrow} i_k \stackrel{\alpha_2}{\rightarrow} j$ must contain one arrow from $(\overline{Q_1}(k))_0\setminus[N_1^\prime]$ to itself and one arrow in $A(k-1)$.
\end{proof}

In the context of this lemma, we refer to this unique arrow in $A(k-1)$ as $\overline{\alpha}$.  We use Lemma \ref{lem:inductive_color} to define a \textbf{coloring function} to stratify the set of arrows $A(k)$. This will allow us to keep track of their orientations as will be needed to prove a crucial lemma (see Lemma~\ref{tcolorind}).

\begin{definition}\label{colorfunction}
Let $(R,F)$ be a $t$-colored direct sum with a direct sum decomposition of the form shown in (\ref{Qeqn}) and let $\underline{\mu}$ be a mutation sequence where $\text{supp}(\underline{\mu}) \subset (Q_1)_0$. Define a \textbf{coloring function} %\textcolor{red}{with respect to $Q_1$} 
by $$\begin{array}{rcl}
f^0: A(0) & \longrightarrow & \{a_1,\ldots, a_t\} \\ %\subset \mathbb{N}\\
\alpha & \longmapsto & s(\alpha).
\end{array}$$ We say that $\alpha \in A(0)$ has \textbf{color} $f^0(\alpha)$ in $(R,F)$. Now, inductively we define a coloring function on each ice quiver $(R^{(k)},F)$ where $k \in [0,d].$   Define $f^k : A(k) \to \{a_1,\ldots, a_t\}$ by 

$$\begin{array}{rcl}
f^k(\alpha) & = &  \left\{\begin{array}{rcl} f^{k-1}(\overline{\alpha}) & : & \text{if } \alpha, \alpha^{\text{op}} \not \in A(k-1)\\ 
f^{k-1}(\alpha^\text{op}) & : & \text{if } \alpha \not \in A(k-1), \ \alpha^{\text{op}} \in A(k-1),\\
f^{k-1}(\alpha) & : & \text{if } \alpha \in A(k-1).  \end{array}\right.
\end{array}$$

\noindent We say that $\alpha \in A(k)$ has \textbf{color} $f^k(\alpha)$ in $(R^{(k)},F).$ 
\end{definition}

\begin{example}
Using the notation from Example~\ref{directsum1} and writing $$\widehat{Q} = \widehat{Q}_1\oplus_{(1,1,1,3,4,4)}^{(5,8,11,8,9,11)}\left(\widehat{Q}_2\oplus_{(6)}^{(5)}\widehat{Q}_3\right),$$ we have $a_1 = 1$ and $b^{(1)}_{1} = 5, b^{(1)}_{2} = 8, b^{(1)}_{3}=11$, $a_2 = 3$ and $b^{(2)}_{1} = 8$, and $a_3 = 4$ and $b^{(3)}_{1} = 9, b^{(3)}_{2} = 11$. In Figure~\ref{colorevolve}, we show $\widehat{Q}$ and $\mu_3\widehat{Q}$. The label written on an arrow $\alpha$ of $\widehat{Q}$ or $\mu_3\widehat{Q}$ indicates its color with respect to $Q_1$.  
\end{example}

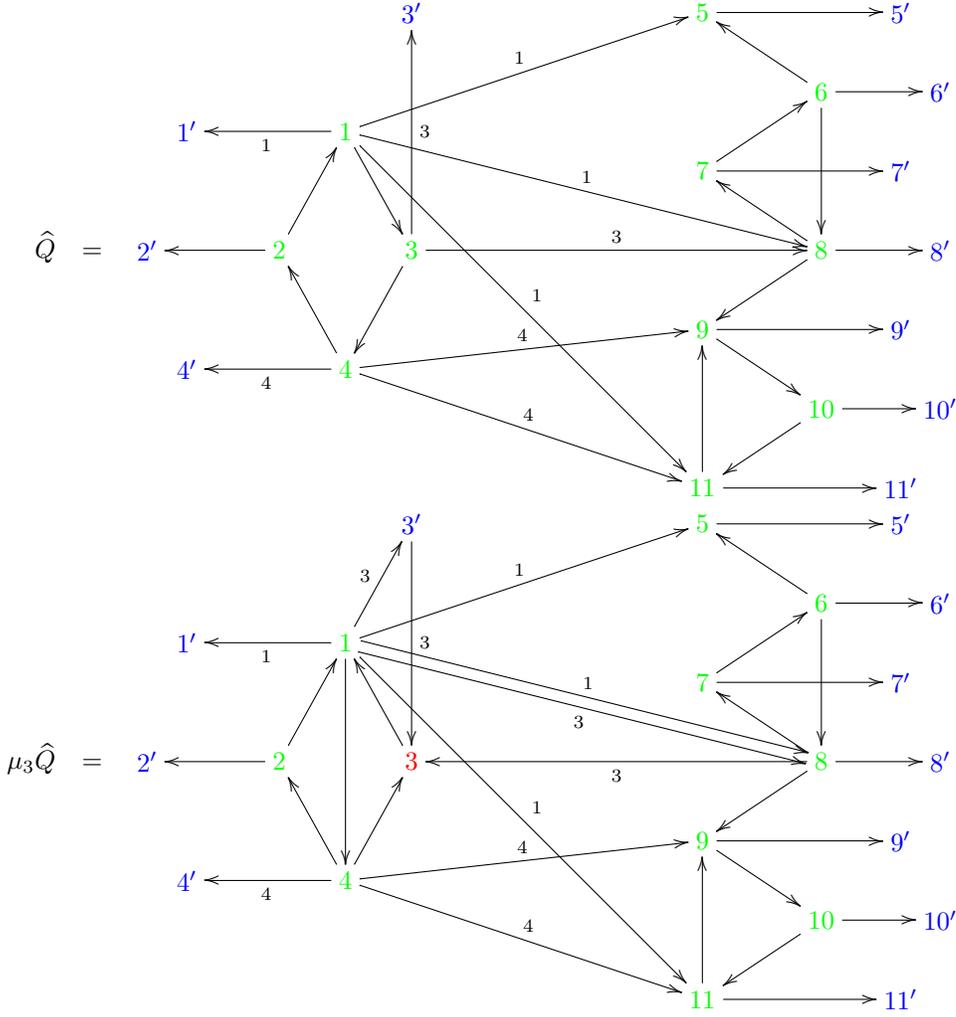
\begin{figure}[h]
$\begin{array}{rcl}
\raisebox{-1.25in}{$\widehat{Q}$} & \raisebox{-1.25in}{=} & \begin{xy} 0;<1pt,0pt>:<0pt,-1pt>:: 
(75,45) *+{\textcolor{green}{1}} ="0",
(50,90) *+{\textcolor{green}{2}} ="1",
(100,90) *+{\textcolor{green}{3}} ="2",
(75,135) *+{\textcolor{green}{4}} ="3",
(210,0) *+{\textcolor{green}{5}} ="4",
(255,30) *+{\textcolor{green}{6}} ="5",
(210,60) *+{\textcolor{green}{7}} ="6",
(255,90) *+{\textcolor{green}{8}} ="7",
(210,120) *+{\textcolor{green}{9}} ="8",
(255,150) *+{\textcolor{green}{10}} ="9",
(210,180) *+{\textcolor{green}{11}} ="10",
(15,45) *+{\textcolor{blue}{1^\prime}} ="11",
(0,90) *+{\textcolor{blue}{2^\prime}} ="12",
(100,0) *+{\textcolor{blue}{3^\prime}} ="13",
(15,135) *+{\textcolor{blue}{4^\prime}} ="14",
(285,60) *+{\textcolor{blue}{7^\prime}} ="15",
(300,30) *+{\textcolor{blue}{6^\prime}} ="16",
(285,0) *+{\textcolor{blue}{5^\prime}} ="17",
(300,90) *+{\textcolor{blue}{8^\prime}} ="18",
(285,120) *+{\textcolor{blue}{9^\prime}} ="19",
(300,150) *+{\textcolor{blue}{10^\prime}} ="20",
(285,180) *+{\textcolor{blue}{11^\prime}} ="21",
"1", {\ar"0"},
"0", {\ar"2"},
"0", {\ar^1"4"},
"0", {\ar^1"7"},
"0", {\ar^1"10"},
"0", {\ar^1"11"},
"3", {\ar"1"},
"1", {\ar"12"},
"2", {\ar"3"},
"2", {\ar^3"7"},
"2", {\ar_3"13"},
"3", {\ar^4"8"},
"3", {\ar^4"10"},
"3", {\ar^4"14"},
"5", {\ar"4"},
"4", {\ar"17"},
"6", {\ar"5"},
"5", {\ar"7"},
"5", {\ar"16"},
"7", {\ar"6"},
"6", {\ar"15"},
"7", {\ar"8"},
"7", {\ar"18"},
"8", {\ar"9"},
"10", {\ar"8"},
"8", {\ar"19"},
"9", {\ar"10"},
"9", {\ar"20"},
"10", {\ar"21"},
\end{xy}\\
\raisebox{-1.25in}{$\mu_3\widehat{Q}$} & \raisebox{-1.25in}{=} &\begin{xy} 0;<1pt,0pt>:<0pt,-1pt>:: 
(75,45) *+{\textcolor{green}{1}} ="0",
(50,90) *+{\textcolor{green}{2}} ="1",
(100,90) *+{\textcolor{red}{3}} ="2",
(75,135) *+{\textcolor{green}{4}} ="3",
(210,0) *+{\textcolor{green}{5}} ="4",
(255,30) *+{\textcolor{green}{6}} ="5",
(210,60) *+{\textcolor{green}{7}} ="6",
(255,90) *+{\textcolor{green}{8}} ="7",
(210,120) *+{\textcolor{green}{9}} ="8",
(255,150) *+{\textcolor{green}{10}} ="9",
(210,180) *+{\textcolor{green}{11}} ="10",
(15,45) *+{\textcolor{blue}{1^\prime}} ="11",
(0,90) *+{\textcolor{blue}{2^\prime}} ="12",
(100,0) *+{\textcolor{blue}{3^\prime}} ="13",
(15,135) *+{\textcolor{blue}{4^\prime}} ="14",
(285,60) *+{\textcolor{blue}{7^\prime}} ="15",
(300,30) *+{\textcolor{blue}{6^\prime}} ="16",
(285,0) *+{\textcolor{blue}{5^\prime}} ="17",
(300,90) *+{\textcolor{blue}{8^\prime}} ="18",
(285,120) *+{\textcolor{blue}{9^\prime}} ="19",
(300,150) *+{\textcolor{blue}{10^\prime}} ="20",
(285,180) *+{\textcolor{blue}{11^\prime}} ="21",
"1", {\ar"0"},
"2", {\ar"0"},
"0", {\ar"3"},
"0", {\ar^1"4"},
"0", {\ar_3@<-.5ex>"7"},
"0", {\ar^1@<.5ex>"7"},
"0", {\ar^1"10"},
"0", {\ar^1"11"},
"0", {\ar^3"13"},
"3", {\ar"1"},
"1", {\ar"12"},
"3", {\ar"2"},
"7", {\ar^3"2"},
"13", {\ar^3"2"},
"3", {\ar^4"8"},
"3", {\ar^4"10"},
"3", {\ar^4"14"},
"5", {\ar"4"},
"4", {\ar"17"},
"6", {\ar"5"},
"5", {\ar"7"},
"5", {\ar"16"},
"7", {\ar"6"},
"6", {\ar"15"},
"7", {\ar"8"},
"7", {\ar"18"},
"8", {\ar"9"},
"10", {\ar"8"},
"8", {\ar"19"},
"9", {\ar"10"},
"9", {\ar"20"},
"10", {\ar"21"},
\end{xy}
\end{array}$
\caption{The quivers $\widehat{Q}$ and $\mu_3\widehat{Q}$ with the coloring functions $f^1$ and $f^2$, respectively.}
\label{colorevolve}
\end{figure}

Our next result shows how the coloring functions $\{f^k\}_{0\le k \le d}$ defined by an ice quiver $(R,F)$ of the form in (\ref{Qeqn}) and a mutation sequence $\underline{\mu} = \mu_{i_d}\circ \cdots \circ \mu_{i_1}$ partition the arrows connecting a mutable vertex $x \in (Q_1)_0$ and a vertex in $\{b^{(j)}_i: \ {i \in [r_j]}\} \sqcup \{a^\prime_j\}.$

\begin{lemma}\label{colorpartit}
Let $(R,F)$ be a $t$-colored direct sum with a direct sum decomposition of the form shown in (\ref{Qeqn}) and let $\underline{\mu}$ be a mutation sequence where $\text{supp}(\underline{\mu}) \subset (Q_1)_0$.  For any $k \in [0,d]$, we have that the coloring function $f^k$ is defined on each $\alpha \in A(k)$. %$f^k(\alpha)$ takes on a value for every $\alpha \in A(k)$.

\end{lemma}
\begin{proof}
We proceed by induction on $k$. If $k = 0$, no mutations have been applied so the desired result holds. Suppose the result holds for $(R^{(k-1)},F)$ and we will show that the result also holds for $(R^{(k)},F)$. We can write $(R^{(k)},F) = (\mu_yR^{(k-1)},F)$ for some $y \in (Q_1)_0.$ Let $\alpha \in A(k)$ such that $s(\alpha) = x\in (Q_1)_0$ and $t(\alpha) = z \in \{b^{(j)}_i: i \in [r_j]\} \sqcup \{a^\prime_j\}$ or vice-versa. %\textcolor{red}{CUT? Let $x \in (Q_1)_0$ and $z \in \{b^{(j)}_i: i \in [r_j]\} \sqcup \{a^\prime_j\}$ be given.} 
There are three cases to consider:

$\begin{array}{rll}
a)& x = y,\\
b) & \text{$x$ is connected to $y$ and there is a $2$-path $x\to y\to z$ or $x \leftarrow y \leftarrow  z$ in $(R^{(k-1)},F)$,} \\
%\textcolor{red}{CUT?~~} b) & \text{$x$ is connected to $y$ and $\text{sgn}(b^{k-1}(x,y)) = \text{sgn}(b^{k-1}(y,z)) \neq 0,$}\\
c) & \text{$x$ does not satisfy $a)$ or $b)$.}
\end{array}$

In \textit{Case a)}, we have that all arrows $\alpha \in A(k-1)$ connecting $x$ and $z$ are replaced by $\alpha^{\text{op}} \in A(k)$. By the definition of the coloring functions, these reversed arrows obtain color $f^k(\alpha) = f^{k-1}(\alpha^\text{op})$. %\textcolor{red}{CUT? By the definition of the coloring functions, we have that $f^k(\alpha^{\text{op}}) = f^{k-1}(\alpha).$ Combining this with the inductive hypothesis, we have that $$b^k(x, z) = \sum_{\ell \in \{a_i:i \in [t]\}} -b^{k-1}(x,z,\ell) = \sum_{\ell \in \{a_i:i \in [t]\}} b^k(x,z,\ell).$$}

In \textit{Case b)}, it follows by Lemma \ref{lem:inductive_color} that an arrow $\alpha \in A(k)$ resulting from mutation of the middle of a $2$-path has a well-defined color given by $f^{k-1}(\overline{\alpha})$.  Further, mutation at $y$ would reverse both arrows of such a $2$-path hence vertex $y$ is in the middle of a $2$-path in $A(k)$ if and only if it is in the middle of a $2$-path in $A(k-1)$.

Finally, in \textit{Case c)}, the mutation at $y$ does not affect the arrows $\alpha$ connecting $x$ and $z$ and therefore the colors of such an arrow is inherited from its color as an arrow in $A(k-1)$.  Note that an arrow between $x$ and $y$ would connect vertices of $(Q_1)_0$ and thus has no color.
\end{proof}

For the proofs in the remainder of this section, we denote the exchange matrix of $(R^{(k)},F)$, as $B_{(R^{(k)},F)} = (b^{k}(x,y))_{x \in [N_1+N_2], y \in [2(N_1+N_2)]}$. Here $b^k(x,y):= \#\{(x \stackrel{\alpha}{\to} y) \in (R^{(k)},F)_1\} - \#\{(y \stackrel{\alpha}{\to} x) \in (R^{(k)},F)_1\}.$  (This differs from the notation of Section \ref{Sec:Prelim} to differentiate it from our notation for the set of vertices $\{b_i^{(j)}: i \in [r_j]\}$.)  Furthermore, we refine this enumeration according to color using the following terminology.
$$b^k(x,y,\ell):= \#\{(x \stackrel{\alpha}{\rightarrow} y) \in (R^{(k)}, F)_1:  \ \alpha \text{ has color $\ell$}\} - \#\{(y \stackrel{\alpha}{\rightarrow} x) \in (R^{(k)}, F)_1:  \ \alpha \text{ has color $\ell$}\}.$$
We proceed with the following two technical lemmas.

\begin{lemma}\label{tcolorind}
Let $(R,F)$ be a $t$-colored direct sum with a direct sum decomposition of the form shown in (\ref{Qeqn}) and let $\underline{\mu}$ be a mutation sequence of $(R,F)$ where $\text{supp}(\underline{\mu}) \subset (Q_1)_0$. For any $k \in [0,d]$, $\ell \in [t]$, and $x \in (Q_1)_0$, all of the arrows of $A(k)$ with color $a_\ell$ and incident to vertex $x$ either all point towards vertex $x$ or all point away from vertex $x$.  Moreover they do so with the same multiplicity.

\end{lemma}

\begin{proof}
We need to show that for any $x \in (Q_1)_0$, $k \in [0,d]$, $j \in [t]$, and $\ell \in \{a_1,\ldots, a_t\}$ we have that $b^k(x,b_i^{(j)},\ell) = b^k(x,a_j^\prime,\ell)$ for all $i \in [r_j]$. We proceed by induction on $k$. If $k = 0$, no mutations have been applied so the desired results holds. Suppose the result holds for $(R^{(k-1)},F)$ and we will show that the result also holds for $(R^{(k)},F)$. We can write $(R^{(k)},F) = (\mu_yR^{(k-1)},F)$ for some $y \in (Q_1)_0.$ Let $x \in (Q_1)_0$ and $z \in \{b^{(j)}_i: i \in [r_j]\} \sqcup \{a^\prime_j\}$ be given. There are three cases to consider:

$\begin{array}{rll}
a)& x = y,\\
b) & \text{$x$ is connected to $y$ and $\text{sgn}(b^{k-1}(x,y)) = \text{sgn}(b^{k-1}(y,z)) \neq 0,$}\\
c) & \text{$x$ does not satisfy $a)$ or $b)$.}
\end{array}$

\noindent By Lemma~\ref{colorpartit}, we know that $b^k(x, z) = \sum_{\ell \in \{a_i:i \in [t]\}} b^k(x,z,\ell)$ and $b^{k-1}(y, z) = \sum_{\ell \in \{a_i:i \in [t]\}} b^{k-1}(y,z,\ell)$. Thus, from the definition of $\mu_y$ and the proof of Lemma~\ref{colorpartit}, we have that $$\begin{array}{rcl}
   b^k(x,z,\ell) & = & \left\{
     \begin{array}{lrl}
       -b^{k-1}(x,z,\ell) & : &  Case \ a) \\
       b^{k-1}(x,y)b^{k-1}(y,z,\ell) + b^{k-1}(x,z,\ell) & : & Case \ b)\\
       b^{k-1}(x,z,\ell) & : & Case \ c).\\
     \end{array}
   \right. \nonumber 
\end{array}$$

By induction, each expression on the right hand side of the equality is independent of the choice of $z \in \{b_i^{(j)}: i \in [r_j]\} \sqcup \{a_j^\prime\}.$ Thus $b^k(x,z,\ell)$ is independent of of the choice of $z \in \{b_i^{(j)}: i \in [r_j]\} \sqcup \{a_j^\prime\}.$ 
\end{proof}

\begin{lemma}\label{Lem:aiprimecolor} Let $(R,F)$ be a $t$-colored direct sum with a direct sum decomposition of the form shown in (\ref{Qeqn}), let $\underline{\mu}$ be a mutation sequence of $(R,F)$ where $\text{supp}(\underline{\mu}) \subset (Q_1)_0$, and let $k \in [0,d]$. In any $(R^{(k)},F)$, the arrows incident to the frozen vertex $a_i^\prime$ (for all $i\in[t]$) have color $a_i$.
\end{lemma}
\begin{proof}
Let $a_i^\prime \in \{a_1^\prime, \ldots, a_t^\prime\}$ be given. We proceed by induction on $k$. If $k = 0$, no mutations have been applied so the desired result holds. Suppose the result holds $(R^{(k-1)}, F)$ and we will show that the result holds for $(R^{(k)}, F).$ We can write $(R^{(k)},F) = (\mu_yR^{(k-1)},F)$ for some $y \in (Q_1)_0.$ As $y \neq a_i^\prime$, there are only two cases to consider: 

$\begin{array}{rll}
%a)& x = y,\\
b) & \text{$a_i^\prime$ is connected to $y$ and there is a $2$-path $a_i^\prime\to y\to z$ or $a_i^\prime \leftarrow y \leftarrow  z$ in $(R^{(k-1)},F)$,} \\
%\textcolor{red}{CUT?~~} b) & \text{$x$ is connected to $y$ and $\text{sgn}(b^{k-1}(x,y)) = \text{sgn}(b^{k-1}(y,z)) \neq 0,$}\\
c) & \text{$a_i^\prime$ does not satisfy $b)$.}
\end{array}$

First, in \textit{Case b)}, if there is a 2-path $a_i^\prime\to y\to z$ in $(R^{(k-1)},F)$ (resp. $a_i^\prime\leftarrow y\leftarrow z$ in $(R^{(k-1)},F)$), then by induction the arrow $(a_i^\prime \to y) \in (R^{(k-1)},F)_1$ (resp. $(a_i^\prime \leftarrow y) \in (R^{(k-1)},F)_1$) has color $a_i$. Thus if there is a 2-path $a_i^\prime\to y\to z$ in $(R^{(k-1)},F)$ (resp. $a_i^\prime\leftarrow y\leftarrow z$ in $(R^{(k-1)},F)$), then there is an arrow $a_i^\prime \to z \in (R^{(k)}, F)_1$ (resp. $a_i^\prime \leftarrow z \in (R^{(k)}, F)_1$) of color $a_i.$

In \textit{Case c)}, the mutation at $y$ does not affect the arrows $\alpha$ connecting $a_i^\prime$ and any vertex $z \in (R^{(k)},F)_0$. Therefore the color of such an arrow is inherited from its color as an arrow in $A(k-1)$. By induction, such arrows have color $a_i$.
\end{proof}

We now arrive at the main result of this section. It shows that if $\widehat{Q}$ is a $t$-colored direct sum each of whose summands has a maximal green sequence, then one can build a maximal green sequence for $Q$ using the maximal green sequences for each of its summands.

\begin{theorem}\label{tcolordirsum}
Let $Q = {Q_1\oplus_{(a_1,\ldots, a_1,\ldots, a_t, \ldots, a_t)}^{(b_{1}^{(1)},\ldots,b_{{r_1}}^{(1)},\ldots, b_{1}^{(t)}, \ldots, b_{{r_t}}^{(t)})}Q_2}$ be a $t$-colored direct sum of quivers. If $\underline{\mu}_1 \in \text{green}\left(Q_1\right)$ and $\underline{\mu}_2 \in \text{green}\left(Q_2\right)$, then $\underline{\mu}_2\circ\underline{\mu}_1 \in \text{green}\left(Q\right).$
\end{theorem}
\begin{proof}[Proof of Theorem~\ref{tcolordirsum}]
 Let $\sigma_i$ denote the permutation of the vertices of $Q_i$ induced by $\underline{\mu}_i$. Observe that under the identification in Definition~\ref{dirsum}, we let

$$\begin{array}{rclll}
\widehat{Q} & = & \widehat{Q_1}\oplus_{(a_1,\ldots, a_1,\ldots, a_t, \ldots, a_t)}^{(b_{1}^{(1)},\ldots,b_{{r_1}}^{(1)},\ldots, b_{1}^{(t)}, \ldots, b_{{r_t}}^{(t)})}\widehat{Q_2}.  \\
\end{array}$$

\noindent We also have that $\widehat{Q_1}\oplus_{(a_1,\ldots, a_1,\ldots, a_t, \ldots, a_t)}^{(b_{1}^{(1)},\ldots,b_{{r_1}}^{(1)},\ldots, b_{1}^{(t)}, \ldots, b_{{r_t}}^{(t)})}\widehat{Q_2}$ is a $t$-colored direct sum of the form shown in (\ref{Qeqn}).

We first show that $\underline{\mu}_1\widehat{Q}$ is a $s$-colored direct sum (for some $s$). Let $\underline{\mu}_1 = \mu_{i_d}\circ \cdots \circ \mu_{i_1}.$  Since $\underline{\mu}_1 \in \text{green}(Q_1),$ we have that $\underline{\mu}_1\widehat{Q_1} = \widecheck{Q_1\sigma_1}$ and so for each frozen vertex $a_j^\prime$ with $j \in [t]$, we obtain that $x_j := a_j\cdot \sigma_1 \in (Q_1)_0$ is the unique mutable vertex of $\widehat{Q}$ that is connected to  $a_j^\prime$ by an arrow. Furthemore, $(x_j \stackrel{\alpha}{\leftarrow} a_j^\prime) \in (\underline{\mu}_1\widehat{Q})_1$ is the unique arrow of $\underline{\mu}_1\widehat{Q}$ connecting these two vertices.

By Lemma~\ref{colorpartit}, for any $a_j^\prime$ we have that $b^d(x_j, a_j^\prime) = \sum_{\ell \in \{a_i: \ i \in [t]\}} b^d(x_j,a_j^\prime,\ell).$ Since $\widecheck{Q_1\sigma_1}$ has no 2-cycles, $\text{sgn}(b^d(x_j,a_j^\prime,\ell)) \le 0$ for any $\ell \in \{a_i: \ i \in [t]\}.$ By Lemma~\ref{Lem:aiprimecolor}, $\alpha_j$ has color $a_j$ so $b^d(x_j,a_j^\prime) = b^d(x_j,a_j^\prime,a_j).$ By Lemma~\ref{tcolorind}, given any $x_j := a_j\cdot \sigma_1\in (Q_1)_0$ we have that $b^d(x_j,z) = b^d(x_j,z,a_j) = -1$ for any $z \in \{b^{(j)}_i : \ i \in [r_j]\}\sqcup \{a_j^\prime\}.$ Thus we have that $\underline{\mu}_1\widehat{Q} = \widehat{Q_2} \oplus_{(b^{(1)}_1, \ldots, b^{(1)}_{r_1},\ldots, b^{(t)}_1,\ldots, b^{(t)}_{r_t})}^{(x_1,\ldots,x_1,\ldots, x_t,\ldots, x_{t})}\widecheck{Q_1\sigma_1}$ is a $s$-colored direct sum where $\{b_1^{(1)},\ldots, b_{r_1}^{(1)}, \ldots, b_1^{(t)}, \ldots, b_{r_t}^{(t)}\}$ is a multiset on $(Q_2)_0\backslash F_2$ (with $s$ distinct elements) and $\{x_1,\ldots, x_1,\ldots, x_t,\ldots, x_t\}$ is a multiset on $(Q_1)_0\backslash F_1$.  Note that in this $s$-colored direct sum, the $b_i^{(j)}$'s are not necessarily given in increasing order.

Next, we show that $\underline{\mu}_2( \underline{\mu}_1(\widehat{Q}))$ is a $t$-colored direct sum. Since $\underline{\mu}_1\widehat{Q}$ is a $s$-colored direct sum %Remark~\ref{putinform1} allows us to write it in the form shown in (\ref{Qeqn}). As 
and $\underline{\mu}_2 = \mu_{j_{d^\prime}}\circ \cdots \circ \mu_{j_1}$ is a mutation sequence with $\text{supp}(\underline{\mu}_2) \subset (Q_2)_0,$ one defines coloring functions $\{g^k\}_{0 \le k \le d^\prime}$ on $\underline{\mu}_1\widehat{Q}$ with respect to $Q_2$ in the sense of Definition~\ref{colorfunction}. Now an analogous argument to that of the previous two paragraphs shows that $$\underline{\mu}_2(\underline{\mu}_1(\widehat{Q})) = \widecheck{Q_1\sigma_1} \oplus^{(y^{(1)}_1, \ldots, y^{(1)}_{r_1},\ldots, y^{(t)}_1,\ldots, y^{(t)}_{r_t})}_{(x_1,\ldots,x_1,\ldots, x_t,\ldots, x_{t})}\widecheck{Q_2\sigma_2}$$
where $y^{(i)}_{j}:= b^{(i)}_{j}\cdot \sigma_ 2$ with $i \in [t],$ $j \in [r_i].$ One now observes that $$(\underline{\mu}_2\circ\underline{\mu}_1)(\widehat{Q}) = \widecheck{Q_1\sigma_1} \oplus^{(y^{(1)}_1, \ldots, y^{(1)}_{r_1},\ldots, y^{(t)}_1,\ldots, y^{(t)}_{r_t})}_{(x_1,\ldots,x_1,\ldots, x_t,\ldots, x_{t})}\widecheck{Q_2\sigma_2} \cong \widecheck{Q}$$ and thus all mutable vertices of $(\underline{\mu}_2\circ\underline{\mu}_1)(\widehat{Q})$ are red.

Finally, since $\underline{\mu}_i \in \text{green}(Q_i)$ for $i = 1,2$, each mutation of $\widehat{Q}$ along $\underline{\mu}_2\circ\underline{\mu}_1$ takes place at a green vertex. Thus $\underline{\mu}_2\circ\underline{\mu}_1 \in \text{green}(Q).$\end{proof}

\begin{remark}We believe that Theorem~\ref{tcolordirsum} holds for any quiver that can be realized as the direct sum of two non-empty quivers, but we do not have a proof. 
\end{remark}

\section{Quivers Arising from Triangulated Surfaces}\label{Sec:SurfTriang} In this section, we show that Theorem~\ref{tcolordirsum} can be applied to quivers that arise from triangulated surfaces. Our main result of this section is that quivers $Q$ arising from triangulated surfaces can be realized as $t$-colored direct sums (see Corollary~\ref{cor:tsurf}). Before presenting this result and its proof, we recall for the reader how a triangulated surface defines a quiver. For more details on this construction, we refer the reader to \cite{FST}.

Let $\textbf{S}$ denote an oriented Riemann surface that may or may not have a boundary and let $\textbf{M} \subset \textbf{S}$ be a finite subset of $\textbf{S}$ where we require that for each component $\textbf{B}$ of $\partial \textbf{S}$ we have $\textbf{B}\cap \textbf{M} \neq \emptyset.$ We call the elements of $\textbf{M}$ \textbf{marked points}, we call the elements of $\textbf{M}\backslash(\textbf{M}\cap \partial \textbf{S})$ \textbf{punctures}, and we call the pair $(\textbf{S},\textbf{M})$ a \textbf{marked surface}. We require that $(\textbf{S},\textbf{M})$ is not one of the following \textbf{degenerate marked surfaces}: a sphere with one, two, or three punctures; a disc with one, two, or three marked points on the boundary; or a punctured disc with one marked point on the boundary.

Given a marked surface $(\textbf{S},\textbf{M}),$ we consider curves on $\textbf{S}$ up to isotopy. We define an $\textbf{arc}$ on \textbf{S} to be a simple curve $\gamma$ in \textbf{S} whose endpoints are marked points and which is not isotopic to a boundary component of \textbf{S}. We say two arcs $\gamma_1$ and $\gamma_2$ on \textbf{S} are \textbf{compatible} if they are isotopic relative to their endpoints to curves that are nonintersecting except possibly at their endpoints. A \textbf{triangulation} of \textbf{S} is defined to be a maximal collection of pairwise compatible arcs, denoted $\textbf{T}$. Each triangulation $\textbf{T}$ of \textbf{S} defines a quiver $Q_\textbf{T}$ by associating vertices to arcs and arrows based on oriented adjacencies (see Figure~\ref{msquiv}).

\begin{figure}[h]
$\begin{array}{rcl}\raisebox{-1in}{\includegraphics[scale=2]{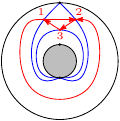}} & \raisebox{-.25in}{$\rightsquigarrow$} & \begin{xy} 0;<1pt,0pt>:<0pt,-1pt>:: 
(0,40) *+{1} ="0",
(60,40) *+{2} ="1",
(30,0) *+{3} ="2",
"2", {\ar"0"},
"1", {\ar"2"},
"0", {\ar@<-.5ex>"1"},
"0", {\ar@<.5ex>"1"},
\end{xy}\end{array}$
\caption{The quiver $Q_\textbf{T}$ defined by a triangulation \textbf{T}.}
\label{msquiv}
\end{figure}

One can also move between different triangulations of a given marked surface $(\textbf{S},\textbf{M}).$ Define the $\textbf{flip}$ of an arc $\gamma \in \textbf{T}$ to be the unique arc $\gamma^\prime \neq \gamma$ that produces a triangulation of $(\textbf{S},\textbf{M})$ given by $\textbf{T}^\prime = (\textbf{T}\backslash\{\gamma\}) \sqcup \{\gamma^\prime\}$ (see Figure~\ref{msflip}). If $(\textbf{S},\textbf{M})$ is a marked surface where $\textbf{M}$ contains punctures, there will be triangulations of \textbf{S} that contain \textbf{self-folded triangles} (the region of \textbf{S} bounded by $\gamma_3$ and $\gamma_4$ in Figure~\ref{taggedarcs} is an example of a self-folded triangle). We refer to the arc $\gamma_3$ (resp. $\gamma_4$) shown in the triangulation in Figure~\ref{taggedarcs} as a \textbf{loop} (a \textbf{radius}). As the flip of a radius of a self-folded triangle is not defined, Fomin, Shapiro, and Thurston introduced \textbf{tagged arcs}, a generalization of arcs, in order to develop such a notion. 

\begin{figure}[h]
$\begin{array}{rcl}\raisebox{-1in}{\includegraphics[scale=2]{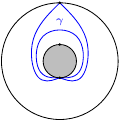}} & \raisebox{-.25in}{$\longleftrightarrow$} & \raisebox{-1in}{\includegraphics[scale=2]{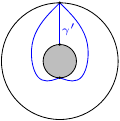}} \end{array}$
\caption{A flip connecting two triangulations of an annulus.}
\label{msflip}
\end{figure}

\begin{figure}[h]
{\includegraphics[scale=1]{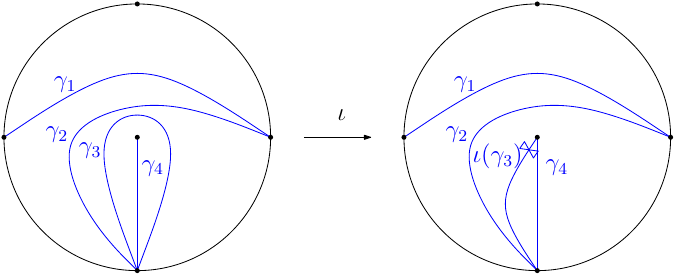}}
\caption{The map identifying a triangulation of a punctured disk as a tagged triangulation of a punctured disk.}
\label{taggedarcs}
\end{figure}

We will not review the details of tagged arcs in this paper, but we remark that any triangulation can be regarded as a \textbf{tagged triangulation} of $(\textbf{S},\textbf{M})$ (i.e. a maximal collection of pairwise compatible tagged arcs). In Figure~\ref{taggedarcs}, we show how one regards a triangulation of $(\textbf{S},\textbf{M})$ as a tagged triangulation of $(\textbf{S},\textbf{M})$. We also note that any tagged triangulation \textbf{T} of $(\textbf{S},\textbf{M})$ gives rise to a quiver $Q_\textbf{T}$ (see Example~\ref{apptosurf} for a quiver defined by a tagged triangulation or see \cite{FST} for more examples and details).

We now review the notion of blocks, which was introduced in \cite{FST} and used to classify quivers defined by a triangulation of some surface.

\begin{definition}\cite[Def. 13.1]{FST}\label{blockdecomp} A \textbf{block} is a directed graph isomorphic to one of the graphs shown in Figure~\ref{blocks}. Depending on which graph it is, we call it a block of type I, II, III, IV, or V. The vertices marked by unfilled circles in Figure~\ref{blocks} are called outlets. A directed graph $\Gamma$ is called \textbf{block-decomposable} if it can be obtained from a collection of disjoint blocks by the following procedure. Take a partial matching of the combined set of outlets; matching an outlet to itself or to another outlet from the same block is not allowed. Identify (or ÒglueÓ) the vertices within each pair of the matching. We require that the resulting graph $\Gamma^\prime$ be connected. If $\Gamma^\prime$ contains a pair of edges connecting the same pair of vertices but going in opposite directions, then remove each such a pair of edges. The result is a block-decomposable graph $\Gamma$.
\begin{figure}[h]
$$\includegraphics[scale=1]{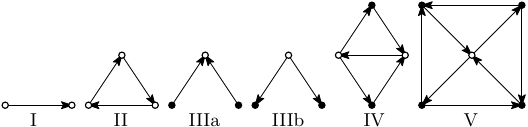}$$
\caption{The Fomin-Shapiro-Thurston blocks.}
\label{blocks}
\end{figure}
\end{definition}

As quivers are examples of directed graphs, one can ask if there is a description of the class of block-decomposable quivers. The following theorem answers this question completely.

\begin{theorem}\cite[Thm. 13.3]{FST}\label{blockdecompdes} Block-decomposable quivers are exactly those quivers defined by a triangulation of some surface.
\end{theorem}
\begin{remark}\label{numberofarrows}
Let $Q_\textbf{T}$ be a quiver defined by a triangulated surface with no frozen vertices. In other words, we are assuming that every $v \in (Q_\textbf{T})_0$ is a mutable vertex.  Then $$\begin{array}{rcl} \#\{\alpha \in (Q_\textbf{T})_1: \ x\stackrel{\alpha}{\longrightarrow} y \text{ for some } y \in (Q_\textbf{T})_0\} & \le & 2\\ \#\{\alpha \in (Q_\textbf{T})_1: \ y\stackrel{\alpha}{\longrightarrow} x \text{ for some } y \in (Q_\textbf{T})_0\} & \le & 2. \end{array}$$
\end{remark}

We now consider the quivers that are defined by triangulations, but are not irreducible. We show that any such quiver is a $t$-colored direct sum. The following lemma is a crucial step in showing that a quiver defined by a triangulation that is not irreducible will not have a double arrow connecting two summands of $Q.$ %\textcolor{red}{and because a vertex in such a quiver has at most two outgoing arrows.}

\begin{lemma}\label{2path}
Assume that $Q$ is defined by a triangulated surface (with 1 connected component) and that
$\begin{array}{c}
\begin{xy}0;<1pt,0pt>:<0pt,-1pt>::
(0,0) *+{a} ="1",
(50,0) *+{b} ="2",
"1", {\ar@<1.ex>^{\alpha_1} "2"},
"1", {\ar@<-1.ex>_{\alpha_2} "2"}
\end{xy}\\
\end{array}$
is a proper subquiver of $Q$.  Then there exists a path of length 2 from $b$ to $a$.
\end{lemma}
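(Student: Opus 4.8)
The plan is to run directly on the combinatorial rule that produces the adjacency quiver of an ideal triangulation (Fomin--Shapiro--Thurston): each ideal triangle $\Delta$ contributes, for every ordered pair of arcs $x,y$ that are sides of $\Delta$ with $y$ immediately following $x$ in the clockwise order around $\Delta$, an arrow $x \to y$, after which one deletes all $2$-cycles. First I would record the standard consequence that, since each arc is a side of at most two triangles, every pair of vertices is joined by at most two arrows, and that a double arrow from $a$ to $b$ can arise only when the two triangles incident to $a$ are distinct triangles $T_1, T_2$ that are \emph{also} both incident to $b$, each carrying the clockwise order $a, b, c_i$ (so that each one contributes the arrow $a \to b$). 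In other words, a double arrow $a \to b$ forces $a$ and $b$ to be the two shared sides of two triangles $T_1$ and $T_2$, whose third sides I call $c_1$ and $c_2$.

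Next I would extract the return path. Within $T_i$ the same clockwise order $a,b,c_i$ also produces the arrows $b \to c_i$ and $c_i \to a$, so whenever $c_i$ is an interior arc (hence a vertex of $Q$) the triangle $T_i$ gives a $2$-path $b \to c_i \to a$. The point requiring care is that these arrows survive the deletion of $2$-cycles, and here I would use that $a$ and $b$ each lie in only the two triangles $T_1, T_2$. Consequently every arrow of $Q$ incident to $a$ is one of $c_1 \to a$, $c_2 \to a$, or $a \to b$; in particular no arrow $a \to c_i$ is ever created, so $c_i \to a$ is not cancelled, and the symmetric argument at $b$ shows $b \to c_i$ is not cancelled either. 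Hence $b \to c_i \to a$ is a genuine directed path in $Q$, which settles the claim as soon as at least one of $c_1, c_2$ is an arc.

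Finally I would treat the remaining possibility that neither $c_1$ nor $c_2$ is an arc, i.e. both are boundary segments. Then $T_1$ and $T_2$ contribute only the two arrows $a \to b$; since $a$ and $b$ are already each used by two triangles and $c_1, c_2$ lie on the boundary, no further triangle can be glued on, so invoking the hypothesis that the surface has a single connected component forces the whole surface to be exactly $T_1 \cup T_2$ glued along $a$ and $b$, namely the annulus with one marked point on each boundary component, whose quiver is the Kronecker quiver --- precisely the excluded case. I expect the main obstacle to be self-folded triangles, where an arc can be a side of a triangle ``twice'' and the naive contribution rule must be replaced by its tagged version; I would dispose of this by first passing to a tagged triangulation (which removes self-folded triangles), and then checking that the identification of $T_1, T_2$ and the non-cancellation step above carry over verbatim.
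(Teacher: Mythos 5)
Your argument is correct in its main line but takes a genuinely different route from the paper. The paper does not work with the triangles of the triangulation directly: it invokes the block decomposition of Fomin--Shapiro--Thurston (Theorem 13.3 of \cite{FST}), observes that the two arrows $\alpha_1,\alpha_2$ must come from two distinct blocks in which both $a$ and $b$ are outlets, deduces that each such block is of type I, II, or IV, rules out the case where both are of type I (that is exactly the Kronecker quiver), and then notes that any block of type II or IV places the arrow $a\to b$ on an oriented $3$-cycle, yielding the path $b\to c\to a$; finally, since $a$ and $b$ cease to be outlets after the gluing, no later attachment can cancel these arrows. Your proof replays the same skeleton one level down, at the level of ideal triangles: your two triangles $T_1,T_2$ with third sides $c_1,c_2$ are the two blocks, your ``both $c_i$ on the boundary'' case is the ``both blocks of type I'' case, and your non-cancellation argument (no arrow $a\to c_i$ or $c_i\to b$ is ever created because $a$ and $b$ lie only in $T_1,T_2$) is the analogue of ``$a$ and $b$ are no longer outlets.'' What the triangle-level version buys is concreteness and independence from the block machinery; what the block version buys is a uniform treatment of punctured configurations, since self-folded triangles are already absorbed into block types III, IV, and V and never have to be discussed separately.

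The one soft spot is precisely the self-folded case, which you flag but do not actually close. ``Passing to a tagged triangulation which removes self-folded triangles'' is not quite a fix: the quiver under discussion is attached to a \emph{given} triangulation, and the correct mechanism is that the signed adjacency matrix treats the radius $r$ of a self-folded triangle by substituting the enclosing loop $\ell$, so the rows of $r$ and $\ell$ agree away from each other. You would need to check that a double arrow out of (or into) such an $r$ forces the same configuration for $\ell$ and that the return path produced for $\ell$ also serves for $r$. This is routine but it is a genuine step, and it is the piece of the argument that the paper's appeal to blocks of types III--V is doing for free.
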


\begin{proof}
Since $Q$ is defined by a triangulated surface, there exists a block decomposition $\{R_j\}_{j \in [m]}$ of $Q$ by Theorem~\ref{blockdecompdes}. By definition of the blocks, $\alpha_1$ and $\alpha_2$ come from distinct blocks. Without loss of generality, $\alpha_1$ is an arrow of $R_1$ and $\alpha_2$ is an arrow of $R_2$. Furthermore, in $R_i$ with $i = 1,2$ we must have that $s(\alpha_i)$ and $t(\alpha_i)$ are outlets. Thus $R_i$ with $i= 1,2$ is of type I, II, or IV, but by assumption $R_1$ and $R_2$ are not both of type I. When we glue the $R_1$ to $R_2$ to using the identifications associated with $Q$, a case by case analysis shows that there exists a path of length 2 from $b$ to $a$. Furthermore, the vertices corresponding to $a$ and $b$ are no longer outlets. Thus attaching the remaining $R_j$'s will not delete any arrows from this path.
\end{proof}

\begin{corollary} \label{cor:tsurf}
Let $Q$ be a quiver defined by a triangulated surface (with 1 connected component) that is not irreducible.  If $Q \neq\begin{array}{c}
\begin{xy}0;<1pt,0pt>:<0pt,-1pt>::
(0,0) *+{a} ="1",
(50,0) *+{b} ="2",
"1", {\ar@<1.ex>^{\alpha_1} "2"},
"1", {\ar@<-1.ex>_{\alpha_2} "2"}
\end{xy}\\
\end{array}$, then $Q$ is a $t$-colored direct sum for some $t \in \mathbb{N}.$
\end{corollary}
\begin{proof}
Since we are assuming that $Q$ is not irreducible, there exists subquivers $Q_1$ and $Q_2$ of $Q$ such that we can write $Q$ as the direct sum $Q = Q_1 \oplus_{ (a_1,\ldots,a_k)}^{(b_1,\ldots, b_k)} Q_2$ where $\{a_1,\ldots, a_k\}$ is a multiset on $(Q_1)_0$ and $\{b_1,\ldots, b_k\}$ is a multiset on $(Q_2)_0.$ Let $a_i \in \{a_1,\ldots, a_k\}$ and $b_j \in \{b_1,\ldots, b_k\}$ be given. We claim that $\#\{\alpha \in (Q)_1: \ a_i \stackrel{\alpha}{\longrightarrow} b_j\} \le 1.$ Suppose this were not the case, then $Q$ would have a proper subquiver of the form $\begin{array}{c}
\begin{xy}0;<1pt,0pt>:<0pt,-1pt>::
(0,0) *+{a_i} ="1",
(50,0) *+{b_j} ="2",
"1", {\ar@<1.ex>^{\alpha_1} "2"},
"1", {\ar@<-1.ex>_{\alpha_2} "2"}
\end{xy}\\
\end{array}$.  By Lemma~\ref{2path}, there must be a path of length 2 from $b_j$ to $a_i$. This contradicts the fact that all arrows between $\{a_1,\ldots, a_k\}$ and $\{b_1,\ldots, b_k\}$ point towards the latter. Hence, $Q$ is not only a direct sum but is a $t$-colored direct sum.
\end{proof}

\section{Signed Irreducible Type $\mathbb{A}$ Quivers}\label{Sec:Signed}
In this section, we focus our attention on \textbf{type $\mathbb{A}_n$ quivers}, which are defined to be quivers $R \in \text{Mut}(1 \leftarrow 2 \leftarrow \cdots \leftarrow n)$ where $n \ge 1$ is a positive integer.  We begin by classifying irreducible type $\mathbb{A}_n$ quivers. After that, we explain how almost any irreducible type $\mathbb{A}_n$ quiver carries the structure of a binary tree of 3-cycles. In section~\ref{Sec:MutSeq}, we will show how regarding irreducible type $\mathbb{A}_n$ quivers as trees of 3-cycles allows us to construct maximal green sequences for such quivers.

Our first step in classifying irreducible type $\mathbb{A}_n$ quivers is to present the following theorem of Buan and Vatne, which classifies quivers in $\text{Mut}(1\leftarrow 2 \leftarrow \cdots \leftarrow n)$ where $n \ge 1$ is a positive integer. We will say that a quiver is of \textbf{type} $\mathbb{A}$ if it is of type $\mathbb{A}_n$ for some positive integer $n\ge 1.$ 

\begin{lemma}\cite[Prop. 2.4]{BV}\label{BV}
A quiver $Q$ is of type $\mathbb{A}$ if and only if $Q$ satisfies the following:
\begin{itemize}
\item[\textit{i)}] all non-trivial cycles in the underlying graph of $Q$ are of length 3 and are oriented in $Q$,
\item[\textit{ii)}] any vertex has at most four neighbors,
\item[\textit{iii)}] if a vertex has four neighbors, then two of its adjacent arrows belong to one 3-cycle, and the other two belong to another 3-cycle,
\item[\textit{iv)}] if a vertex has exactly three neighbors, then two of its adjacent arrows belong to one 3-cycle, and the third arrow does not belong to any 3-cycle. 
\end{itemize}
\end{lemma}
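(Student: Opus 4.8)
The plan is to show that the class $\mathcal{C}$ of quivers satisfying conditions (i)--(iv) is exactly the mutation class of orientations of type $\mathbb{A}$ Dynkin diagrams. Since ``type $\mathbb{A}$'' means mutation-equivalent to such an orientation, this amounts to two inclusions: first that $\mathcal{C}$ contains every orientation of a path and is closed under mutation (giving that every type $\mathbb{A}$ quiver lies in $\mathcal{C}$), and second that every connected quiver in $\mathcal{C}$ can be mutated to an orientation of a path (giving the reverse inclusion). I would reduce to the connected case throughout, since both ``type $\mathbb{A}$'' and conditions (i)--(iv) are checked component by component.

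For the forward inclusion, the base observation is immediate: an orientation of a path $A_n$ has no cycles and every vertex has at most two neighbors, so (i)--(iv) hold vacuously or trivially. The real content is mutation-invariance of $\mathcal{C}$. Because $\mu_k$ only alters arrows incident to $k$ and arrows produced by directed $2$-paths through $k$, this is a finite local check organized by the number of neighbors of $k$ (which is at most four by (ii)). The instructive cases are the degree-three and degree-four vertices. For a degree-four vertex $k$ lying in two $3$-cycles $\{k,a,b\}$ and $\{k,c,d\}$, mutation cancels the two ``within-cycle'' $2$-paths (opening both triangles) and creates two ``cross'' arrows; tracking orientations shows these recombine with the reversed arrows at $k$ into two fresh $3$-cycles $\{k,b,c\}$ and $\{k,d,a\}$, so (iii) is restored and the degrees of $a,b,c,d$ are unchanged. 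The degree-three case is analogous: the bridge at $k$ and the opened triangle reassemble into one new $3$-cycle plus one bridge, preserving (iv). One must also verify that no new arrow joins two vertices already linked by a path avoiding $k$; this is guaranteed by (i), since such a path together with the arrows at $k$ would close up a non-$3$ cycle in the original quiver.

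For the reverse inclusion I would first record the structural consequence of (i)--(iv) that a connected $Q \in \mathcal{C}$ is a ``tree of $3$-cycles'': the $3$-cycles pairwise share at most a single vertex (sharing an edge forces a degree-three vertex all of whose arrows lie in $3$-cycles, forbidden by (iv)), and contracting each $3$-cycle yields a tree, because any additional cycle in the quotient would lift to a cycle of length greater than three, contradicting (i). I would then induct on the number of $3$-cycles. If $Q$ has none, then by (iv) no vertex can have three neighbors and by (ii)--(iii) none can have four, so every vertex has degree at most two and $Q$ is an orientation of a path. If $Q$ has at least one $3$-cycle, the tree structure provides a peripheral one, namely a $3$-cycle meeting the rest of $Q$ in a single vertex $v$; mutating at one of the two other vertices of this triangle opens it into a path while leaving the remainder of $Q$ untouched, producing a quiver in $\mathcal{C}$ with one fewer $3$-cycle. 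By induction this mutated quiver is type $\mathbb{A}$, and since mutation is an involution, so is $Q$.

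I expect the main obstacle to be the bookkeeping in the two places where new arrows appear: confirming, in the mutation-invariance step, that the recombined configurations at degree-three and degree-four vertices genuinely satisfy (iii)--(iv) for every relative orientation of the incident cycles and bridges, and confirming, in the peeling step, that the chosen vertex of the peripheral triangle is not the attaching vertex $v$ (so that the rest of $Q$ is undisturbed) and that the opened triangle does not create a forbidden configuration at $v$. Both reduce to the same principle---mutation is local and (i) forbids long cycles---but each requires a short exhaustive check of the possible arrow directions.
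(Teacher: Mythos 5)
The paper does not actually prove this lemma; it imports it verbatim from \cite{BV} (Prop.\ 2.4), so there is no internal proof to compare against. Your overall strategy --- show that the class $\mathcal{C}$ of quivers satisfying (i)--(iv) contains all orientations of a path and is closed under mutation, then show every connected member of $\mathcal{C}$ can be mutated to a path --- is the standard route, and your forward direction is a sound sketch: the recombination of the two triangles at a degree-four vertex, the triangle-plus-bridge exchange at a degree-three vertex, and the use of condition (i) to exclude unexpected adjacencies among the neighbours of the mutated vertex are all correct as stated.

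The reverse direction, however, contains a genuine gap. You assert that a connected $Q\in\mathcal{C}$ with at least one $3$-cycle has a $3$-cycle meeting the rest of $Q$ in a single vertex, and that a single mutation then removes it. Neither claim holds in general: take the oriented triangle on $\{1,2,3\}$ with one pendant arrow attached at each of its three vertices. This quiver satisfies (i)--(iv), its unique $3$-cycle meets the rest of $Q$ in all three of its vertices, and every vertex of the triangle has degree three --- so by your own degree-three analysis from the forward direction, mutating at any of them converts ``triangle plus bridge'' into ``new triangle plus bridge'' and leaves the number of $3$-cycles unchanged. What that computation really shows is that such a mutation absorbs the first vertex of a pendant path into the triangle, shortening that path by one; the correct peeling argument therefore needs a secondary descent (choose a leaf $3$-cycle of the tree of $3$-cycles and repeatedly mutate to shrink a dangling path at one of its non-connecting vertices until some vertex of the current triangle has degree two, and only then open the triangle). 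Your induction on the number of $3$-cycles survives, but only after inserting this extra layer, which is absent from the proposal and is not covered by the caveats you list at the end.
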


\begin{corollary} \label{Cor:Irr}
Besides the quiver of type $\mathbb{A}_1$, the \textit{irreducible} quivers of type $\mathbb{A}$ are exactly those quivers $Q$ obtained by gluing together a finite number of Type II blocks $\{S_\alpha\}_{\alpha \in {[n]}}$ in such a way that the cycles in the underlying graph of $Q$ are in bijection with the elements of $\{S_\alpha\}_{\alpha \in [n]}$. Additionally, each $S_\alpha$ shares a vertex with at most three other $S_\beta$'s.
(We say that $S_\alpha$ is \textbf{connected} to $S_\beta$ in such a situation.)
%Additionally, each $S_\alpha$ is \textbf{connected} to at most three other $S_\beta$'s (i.e. each $S_\alpha$ shares a vertex with at most three other $S_\beta$'s).
\end{corollary}

\begin{proof}
Assume that $Q$ is a quiver obtained by gluing together a finite number of Type II blocks $\{S_{\alpha}\}_{\alpha \in [n]}$ in such a way that the cycles in the underlying graph of $Q$ are in bijection with the elements of $\{S_\alpha\}_{\alpha \in [n]}$. Then $Q$ satisfies \textit{i)} in Lemma~\ref{BV}. By the rules for gluing blocks together, each vertex $i \in (Q)_0$ has either two or four neighbors so \textit{ii)} and \textit{iv)} in Lemma~\ref{BV} hold. It also follows from the gluing rules that if $i$ has four neighbors, then two of its adjacent arrows belong to one 3-cycle and the other two belong to another 3-cycle so \textit{iii)} in Lemma~\ref{BV} holds. Additionally, since each arrow of $Q$ is contained in an oriented $3$-cycle, there is no way to partition the vertices into two components so that the arrows connecting them coherently point from one to the other. Thus the quiver $Q$ is irreducible.

Conversely, let $Q$ be an irreducible type $\mathbb{A}$ quiver that is not the quiver of type $\mathbb{A}_1$. We first show that any arrow of $Q$ belongs to a (necessarily) oriented 3-cycle of $Q$. Suppose $(i \stackrel{\alpha}{\longrightarrow} j) \in (Q)_1$ does not belong to an oriented 3-cycle of $Q$. Then there exist nonempty full subquivers $Q_1$ and $Q_2$ of $Q$ such that $Q = Q_1 \oplus_{(i)}^{(j)} Q_2.$ (By property \textit{i)}, there cannot be an (undirected) cycle of length larger than $3$.)  This contradicts the fact that $Q$ is irreducible.

Not only is it true that every arrow of $Q$ belongs to an oriented $3$-cycle of $Q$, property \textit{i)} also ensures that $Q$ is obtained by identifying certain vertices of Type II blocks in a finite set of Type II blocks $\{S_\alpha\}_{\alpha \in [n]}$.
Furthermore, property \textit{ii)} in Lemma~\ref{BV} implies these identifications are such that all vertices have two or four neighbors. By properties \textit{i)} and \textit{iii)}, these identifications do not create any new cycles in the underlying graph of $Q$. Thus $Q$ is obtained by gluing together a finite number of Type II blocks $\{S_\alpha\}_{\alpha \in [n]}$ in such a way that the cycles in the underlying graph of $Q$ are in bijection with the elements of $\{S_{\alpha}\}_{\alpha \in [n]}$.
\end{proof}

\begin{definition}
Let $Q$ be an irreducible type $\mathbb{A}$ quiver with at least one 3-cycle. Define a $\textbf{leaf}$ 3-cycle $S_\alpha$ in $Q$ to be a 3-cycle in $Q$ that is connected to at most one other 3-cycle in $Q$. We define a $\textbf{root}$ 3-cycle to be a chosen leaf 3-cycle. 
\end{definition}

\begin{lemma}
Suppose $Q$ is an irreducible type $\mathbb{A}$ quiver with at least one 3-cycle. Then $Q$ has a leaf 3-cycle.
\end{lemma}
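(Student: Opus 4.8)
The plan is to encode the combinatorics of the 3-cycles in an auxiliary graph and show it is a forest. Define a graph $T$ whose vertices are the 3-cycles $C_1,\dots,C_n$ of $Q$ (there is at least one by hypothesis), with an edge joining $C_i$ and $C_j$ precisely when they are connected, i.e. when they share a vertex of $Q$. A \textbf{leaf} 3-cycle in the sense of the definition is then exactly a vertex of $T$ of degree at most $1$. Since every finite nonempty forest has a vertex of degree at most $1$ (an isolated vertex when $n=1$, a genuine leaf when $n\ge 2$), it suffices to prove that $T$ contains no cycle. I would therefore reduce the lemma to the single assertion that $T$ is acyclic.

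Before that, I would record the structural facts furnished by Corollary~\ref{Cor:Irr} and Lemma~\ref{BV}. Because $Q$ is built by gluing Type~II blocks (oriented 3-cycles) at vertices, each arrow of $Q$ lies in exactly one 3-cycle, and distinct 3-cycles are arrow-disjoint. Moreover, by conditions (ii) and (iii) of Lemma~\ref{BV}, any vertex has at most four neighbors and, when it has four, its adjacent arrows split into two distinct 3-cycles; hence every vertex of $Q$ lies in at most two 3-cycles, and two 3-cycles meet in at most one common vertex. In particular each vertex of $Q$ accounts for at most one edge of $T$, so a cycle in $T$ uses distinct shared vertices.

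The main obstacle is the acyclicity of $T$, and this is where I would spend the effort. Suppose, for contradiction, that $T$ contains a cycle $C_{j_1},C_{j_2},\dots,C_{j_m},C_{j_1}$ with $m\ge 3$ and the $C_{j_t}$ distinct. Let $v_t$ be the vertex shared by $C_{j_t}$ and $C_{j_{t+1}}$ (indices mod $m$); by the preceding paragraph the $v_t$ are pairwise distinct. Within each $C_{j_t}$, the two distinct vertices $v_{t-1}$ and $v_t$ are joined by a path lying entirely inside that 3-cycle. Concatenating these paths over $t=1,\dots,m$ produces a closed walk in the underlying graph of $Q$ visiting the distinct vertices $v_1,\dots,v_m$ in order. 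Since consecutive pieces lie in distinct, arrow-disjoint 3-cycles, this closed walk backtracks on no arrow and traverses arrows from at least two different blocks, so it contains a non-trivial cycle that is \emph{not} one of the block 3-cycles $C_i$. This contradicts Corollary~\ref{Cor:Irr}, which asserts that the only cycles in $Q$ are the oriented 3-cycles coming from the blocks.

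Having ruled out cycles in $T$, I would conclude that $T$ is a forest (indeed a tree, once one notes $Q$ is connected and every arrow lies on some 3-cycle, but acyclicity alone is enough). A finite nonempty forest has a vertex of degree at most $1$, and such a vertex is precisely a 3-cycle connected to at most one other 3-cycle, i.e. a leaf 3-cycle. The delicate point throughout is the extraction of a genuine forbidden cycle from a cycle of $T$; the facts that each arrow belongs to a unique 3-cycle and each vertex to at most two 3-cycles are exactly what guarantee the resulting closed walk is non-degenerate and spans more than one block.
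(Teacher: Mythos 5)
Your proof is correct and rests on the same key fact as the paper's: by Lemma~\ref{BV} (via Corollary~\ref{Cor:Irr}) the only cycles in $Q$ are the block 3-cycles, which forces the adjacency structure of the 3-cycles to be acyclic. The paper phrases this as a greedy walk from 3-cycle to 3-cycle that ``must end,'' while you make the same point by showing the auxiliary graph $T$ is a forest and taking a degree-$\le 1$ vertex; your version simply spells out the termination argument the paper leaves implicit.
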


\begin{proof}
If $Q$ has exactly one 3-cycle $R$, then $Q = R$ is a leaf 3-cycle. If $Q$ is obtained from the Type II blocks $\{S_i\}_{i\in[n]}$, consider the block $S_{i_1}$. If $S_{i_1}$ is connected to only one other 3-cycle, then $S_{i_1}$ is a leaf 3-cycle. If $S_{i_1}$ is connected to more than one 3-cycle, let $S_{i_2}$ denote one of the 3-cycles to which $S_{i_1}$ is connected. If $S_{i_2}$ is only connected to $S_{i_1}$, then $S_{i_2}$ is a leaf 3-cycle. Otherwise, there exists a 3-cycle $S_{i_3} \neq S_{i_1}$ connected to $S_{i_2}$. By Lemma~\ref{BV} there are no non-trivial cycles in the underlying graph of $Q$ besides those determined by the blocks $\{S_i\}_{i \in [n]}$ so this process will end. Thus $Q$ has a leaf 3-cycle.
\end{proof}

Consider a pair, $(Q,S)$ where $Q$ is an irreducible type $\mathbb{A}$ quiver $Q$ with at least one 3-cycle, and $S$ denotes a root 3-cycle in $Q$. We now define a labeling of the arrows of $Q$, an ordering of the 3-cycles, and a sign function on the set of 3-cycles of $Q$. Adding this additional data to $(Q,S)$ yields a binary tree structure on the set of 3-cycles $\{S_\alpha\}_{\alpha\in [n]}$.

We begin by letting $S_1 :=S$ denote the chosen root 3-cycle, $S_2$ denote the unique 3-cycle connected to $S_1$, and $z_1$ denote the vertex shared by $S_1$ and $S_2$.  (In the event that $Q$ is a single 3-cycle, we choose $z_1$ to be a vertex of $S_1$ arbitrarily.)  Next, we let $\alpha_1$, $\beta_1$ and $\gamma_1$ denote the three arrows of $S_1$ in cyclic order such that $s(\gamma_1)=z_1=t(\beta_1)$, $s(\beta_1)=t(\alpha_1)$, and $s(\alpha_1)=t(\gamma_1)$.  We next label the arrows of $S_2$ such that $s(\alpha_2)=z_1 = t(\gamma_2)$, $t(\alpha_2)= s(\beta_2)$, and $t(\beta_2)=s(\gamma_2)$.  See Figure \ref{Slabel} for examples of this labeling.

%\vspace{-1.25in}

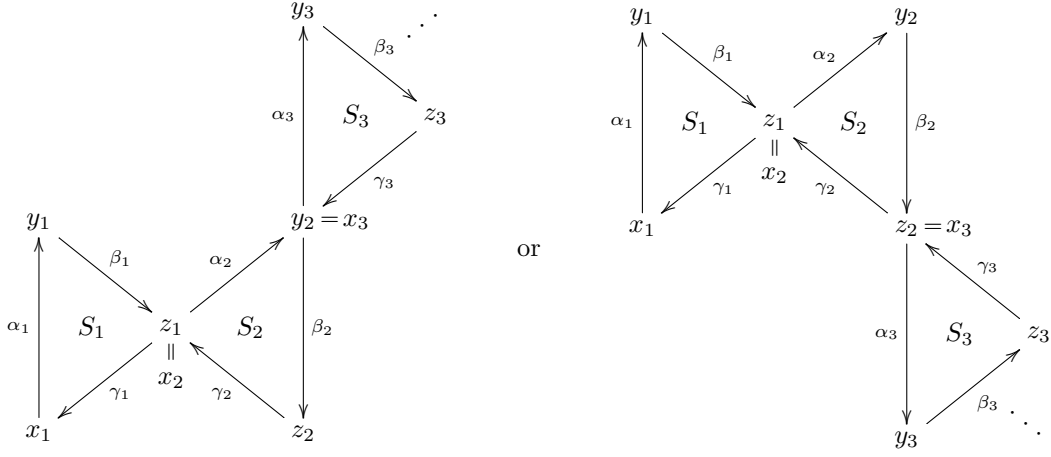
\begin{figure}[h]
$$\begin{array}{ccc}{\begin{xy} 0;<1pt,0pt>:<0pt,-1pt>:: 
(0,160) *+{x_1} ="0",
(0,80) *+{y_1} ="1",
(50,120) *+{z_1} ="2",
(100,80) *+{y_2} ="3",
(100,160) *+{z_2} ="4",
(100,0) *+{y_3} ="5",
(150,40) *+{z_3} ="6",
(140,10) *+{\cdot} = "14",
(145,6) *+{\cdot} = "15",
(150,2) *+{\cdot} = "16",
(50,130) *+{\equalto{}{}} ="7",
(50,140) *+{x_2} ="8",
(110,80) *+{=} ="9",
(120,80) *+{x_3} ="10",
(120,40) *+{S_3} ="11",
(80,120) *+{S_2} ="12",
(20,120) *+{S_1} ="13",
"0", {\ar^{\alpha_1}"1"},
"2", {\ar^{\gamma_1}"0"},
"1", {\ar^{\beta_1}"2"},
"2", {\ar^{\alpha_2}"3"},
"4", {\ar^{\gamma_2}"2"},
"3", {\ar^{\beta_2}"4"},
"3", {\ar^{\alpha_3}"5"},
"6", {\ar^{\gamma_3}"3"},
"5", {\ar^{\beta_3}"6"},
\end{xy}} & \raisebox{-1.25in}{\hspace{1em} \text{or} \hspace{1em}} & \raisebox{.0in}{\begin{xy} 0;<1pt,0pt>:<0pt,-1pt>:: 
(0,80) *+{x_1} ="0",
(0,0) *+{y_1} ="1",%was (0,80)
(50,40) *+{z_1} ="2",
(100,0) *+{y_2} ="3",
(100,80) *+{z_2} ="4",
%(100,0) *+{y_3} ="5",
(100,160) *+{y_3} ="5",
(150,120) *+{z_3} ="6",
(140,150) *+{\cdot} = "14",
(145,154) *+{\cdot} = "15",
(150,158) *+{\cdot} = "16",
%(150,40) *+{z_3} ="6",
(50,50) *+{\equalto{}{}} ="7",
(50,60) *+{x_2} ="8",
(110,80) *+{=} ="9",
(120,80) *+{x_3} ="10",
(120,120) *+{S_3} ="11",
(80,40) *+{S_2} ="12",
(20,40) *+{S_1} ="13",
"0", {\ar^{\alpha_1}"1"},
"2", {\ar^{\gamma_1}"0"},
"1", {\ar^{\beta_1}"2"},
"2", {\ar^{\alpha_2}"3"},
"4", {\ar^{\gamma_2}"2"},
"4", {\ar_{\alpha_3}"5"},
"3", {\ar^{\beta_2}"4"},
%"3", {\ar^{\alpha_3}"5"},
"6", {\ar_{\gamma_3}"4"},
"5", {\ar_{\beta_3}"6"},
\end{xy}}\end{array}$$
\caption{Labeling arrows of an irreducible quiver of type $\mathbb{A}$.}
\label{Slabel}
\end{figure}

For $i\geq 2$, we order the remaining 3-cycles by a depth-first ordering where we 

\begin{itemize}
\item[(1)] inductively define $S_{i+1}$ to be the 3-cycle attached to the vertex $t(\alpha_i)$,
\item[(2)] define $\alpha_{i+1}$ such that $s(\alpha_{i+1})=t(\alpha_i)$ and then $\beta_{i+1}$, $\gamma_{i+1}$ follow $\alpha_{i+1}$ in cyclic order,
\item[(3)] if no 3-cycle is attached to $t(\alpha_i)$, define $S_{i+1}$ to be the 3-cycle attached to $t(\beta_i)$ and $s(\alpha_{i+1}) =t(\beta_i)$ instead, and finally
\item[(4)] minimally backtrack and continue the depth-first ordering until all arrows and 3-cycles have been labeled.
\end{itemize}

Given a 3-cycle $S_i$ in the block decomposition of $Q$, define $x_i := s(\alpha_i),$ $y_i := s(\beta_i)$, and $z_i := s(\gamma_i)$. The vertex $z_1$ of $S_1$ was already defined in the previous paragraph and that definition of $z_1$ clearly agrees with this one. We say that a 3-cycle $S_i$ is \textbf{positive} (resp. \textbf{negative}) if $s(\alpha_i) = t(\alpha_j)$ (resp. $s(\alpha_i)= t(\beta_j)$) for some $j<i$. We define $\text{sgn}(S_i) := +$ (resp. $-$) if $S_i$ is positive (resp. negative). By convention, we set $\text{sgn}(S_1) = +$.  We define $T_i := (S_i, \text{sgn}(S_i))$ to be a 3-cycle in the block decomposition of $Q$ and its sign. We will refer to $T_i$ where $i \in [n]$ as a \textbf{signed} 3\textbf{-cycle} of $Q$. %For  \textcolor{blue}{graphical convenience}, we will consistently draw 3-cycles according to the following \textcolor{blue}{sign convention:} 
For graphical convenience, we will consistently draw 3-cycles as shown in Figure~\ref{+and-} with the convention that $\text{sgn}(S_i) = +$ (resp. $-$) in the former figure (resp. latter figure). We refer to the data $\mathcal{Q} := (Q,S,\{T_i\}_{i \in [n]})$ as a \textbf{signed} irreducible type $\mathbb{A}$ quiver.

\begin{figure}[h]
\[
\xymatrix
{
& y_i \ar@{->}[dr]^{\beta_i} &  & x_i \ar@{->}[rr]_{T_i}^{\alpha_i} & & y_i \ar@{->}[dl]^{\beta_i}\\
x_i \ar@{->}[ur]^{\alpha_i} & & z_i\ar@{->}[ll]_{T_i}^{\gamma_i} & & z_i \ar@{->}[ul]^{\gamma_i} &\\
}
\]
\caption{A positive (resp. negative) 3-cycle is shown on the left (resp. right).}
\label{+and-}
\end{figure}
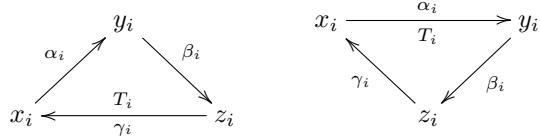

\begin{remark}\label{signdet}
If $Q$ is an irreducible type $\mathbb{A}$ quiver with more than one 3-cycle, then the choice of a root 3-cycle completely determines the sign of each 3-cycle of $Q$. Thus $\mathcal{Q} = (Q, S, \{T_i\}_{i \in [n]})$ depends only on $(Q,S)$ and thus it makes sense to refer to the signed irreducible type $\mathbb{A}$ quiver \textit{defined by} $(Q,S).$
\end{remark}

The next lemma follows immediately from Corollary~\ref{Cor:Irr} and from our definition of the sign of a 3-cycle $S_i$ in $Q$.

\begin{lemma}\label{binarytreelemma}
If $Q$ is an irreducible type $\mathbb{A}$ quiver with at least one 3-cycle, $S$ is a root 3-cycle of $Q$ and $\mathcal{Q} = (Q,S,\{T_i\}_{i \in [n]})$ is a signed irreducible type $\mathbb{A}$ quiver defined by $(Q,S)$, then $\mathcal{Q}$ is equivalent to a labeled binary tree with vertex set $\{S_i\}_{i\in[n]}$ where $S_i$ is connected to $S_j$ by an edge if and only if $S_i$ is connected to $S_j$ (i.e. $S_i$ and $S_j$ share a vertex). Furthermore, a 3-cycle $S_j \in \{S_i\}_{i \in [n]}$ has a right child (resp. left child) if and only if $S_j$ shares the vertex $y_j$ (resp. $z_j$) with another 3-cycle, .
\end{lemma}

For the remainder of this section, we assume that $Q$ is a given irreducible type $\mathbb{A}$ quiver and $S$ a root 3-cycle of $Q$. We also assume $\mathcal{Q}$ is a signed irreducible type $\mathbb{A}$ defined by the data $(Q, S)$. For convenience, we will abuse notation and refer to the \textbf{vertices}, \textbf{arrows}, \textbf{3-cycles}, etc. of $\mathcal{Q}$ with the understanding that we are referring to the vertices, arrows, 3-cycles, etc. of ${Q}$, respectively. Since we will often work with $\widehat{Q}$, the framed quiver of $Q$, it will also be useful to define $\widehat{\mathcal{Q}}$ to be framed quiver of $Q$ with the additional data of $S$, the root 3-cycle of $Q$, and the data of a sign associated with each 3-cycle of $Q$. Now for convenience, we will abuse notation and refer to the \textbf{mutable vertices}, \textbf{frozen vertices}, \textbf{arrows}, and \textbf{3-cycles} of $\widehat{\mathcal{Q}}$ with the understanding that we are referring to the mutable vertices, frozen vertices, arrows, and 3-cycles of $\widehat{Q}$, respectively. We will refer to $\widehat{\mathcal{Q}}$ as a \textbf{signed irreducible type} $\mathbb{A}$ \textbf{framed quiver}. Additionally, we define a \textbf{full subquiver} $\mathcal{R}$ of $\mathcal{Q}$ or $\widehat{\mathcal{Q}}$ to be a full subquiver of $Q$ or $\widehat{Q}$, respectively, with the property that the sign of any 3-cycle $C$ of $\mathcal{R}$ is the same as the sign of $C$ when regarded as a 3-cycle of $\mathcal{Q}$ or $\widehat{\mathcal{Q}}$.

\begin{example}\label{signirred1}
In Figure~\ref{A23}, we show an example of a signed irreducible type $\mathbb{A}_{23}$ quiver, which we denote by $\mathcal{Q}$. The positive 3-cycles of $\mathcal{Q}$ are $T_1, T_3, T_4, T_5, T_7.$ For clarity, we have labeled the arrows of $\mathcal{Q}$ in Figure~\ref{A23}, but we will often suppress these labels in later examples.  We also note that many of the vertices, e.g. $z_1, y_2, y_3, z_3$ could also be labeled as $x_2,x_3,x_4, x_{11}$, respectively, but we suppress the vertex labels $x_i$ (which are shorthand for $s(\alpha_i)$) except for $x_1$.

\begin{figure}[h]
$$\begin{xy} 0;<1pt,0pt>:<0pt,-1pt>:: 
(0,90) *+{x_1} ="0",
(60,90) *+{z_1} ="1",
(30,60) *+{y_1} ="2",
(90,120) *+{z_2} ="3", %*\frm{-o},
(120,90) *+{y_2} ="4",
(150,60) *+{y_3} ="5",
(180,30) *+{y_4} ="6",
(210,60) *+{z_4} ="7",
(180,90) *+{z_3} ="8",
(210,0) *+{y_5} ="9",
(240,30) *+{z_5} ="10",
(300,30) *+{y_6} ="11",
(270,60) *+{z_6} ="12",
(330,0) *+{y_7} ="13",
(360,30) *+{z_7} ="14",
(420,30) *+{y_8} ="15",
(390,60) *+{z_8} ="16",
(450,60) *+{y_9} ="17",
(420,90) *+{z_9} ="18",
(480,90) *+{y_{10}} ="19",
(450,120) *+{z_{10}} ="20",
(210, 120) *+{z_{11}} = "21", %*\frm{-o};,
(240, 90), *+{y_{11}} = "22", %*\frm{-o};,
"1", {\ar_{T_1}^{\gamma_1}"0"},
"0", {\ar^{\alpha_1}"2"},
"2", {\ar^{\beta_1}"1"},
"3", {\ar^{\gamma_2}"1"},
"1", {\ar_{T_2}^{\alpha_2}"4"},
"4", {\ar^{\beta_2}"3"},
"4", {\ar^{\alpha_3}"5"},
"8", {\ar_{T_3}^{\gamma_3}"4"},
"5", {\ar^{\alpha_4}"6"},
"7", {\ar_{T_4}^{\gamma_4}"5"},
"5", {\ar^{\beta_3}"8"},
"6", {\ar^{\beta_4}"7"},
"6", {\ar^{\alpha_5}"9"},
"10", {\ar_{T_5}"6"},
"10", {\ar^{\gamma_5}"6"},
"9", {\ar^{\beta_5}"10"},
"10", {\ar_{T_6}"11"},
"10", {\ar^{\alpha_6}"11"},
"12", {\ar^{\gamma_6}"10"},
"11", {\ar^{\beta_6}"12"},
"11", {\ar^{\alpha_7}"13"},
"14", {\ar_{T_7}^{\gamma_7}"11"},
"13", {\ar^{\beta_7}"14"},
"14", {\ar_{T_8}^{\alpha_8}"15"},
"16", {\ar^{\gamma_8}"14"},
"15", {\ar^{\beta_8}"16"},
"16", {\ar_{T_9}^{\alpha_9}"17"},
"18", {\ar^{\gamma_9}"16"},
"17", {\ar_{\beta_9}"18"},
"18", {\ar_{T_{10}}^{\alpha_{10}}"19"},
"20", {\ar^{\gamma_{10}}"18"},
"19", {\ar^{\beta_{10}}"20"},
"8", {\ar_{T_{11}}^{\alpha_{11}}"22"},
"22", {\ar^{\beta_{11}}"21"},
"21", {\ar^{\gamma_{11}}"8"},
%"8", {\ar@{-->}_{T_{11}}^{\alpha_{11}}"22"},
%"22", {\ar@{-->}"21"},
%"21", {\ar@{-->}"8"},
\end{xy}$$
\caption{A signed irreducible type $\mathbb{A}_{23}$ quiver.}
\label{A23}
\end{figure}
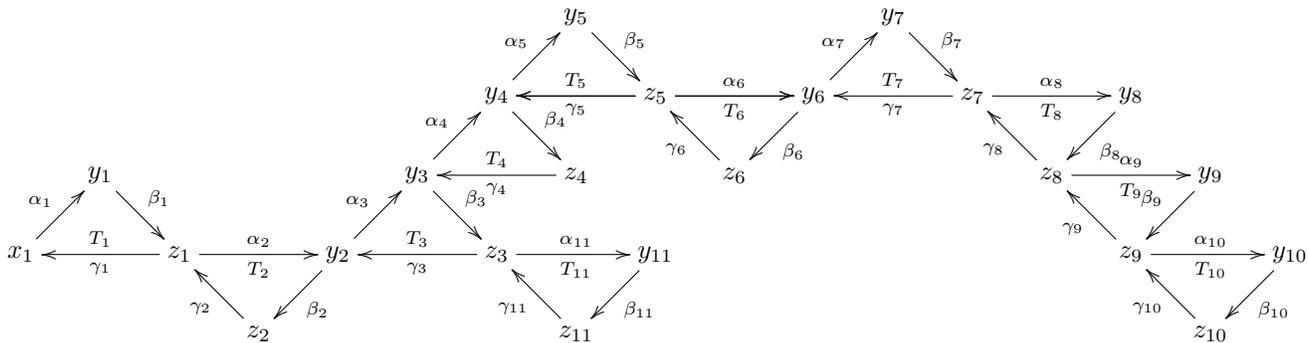

\end{example}

It will be helpful to define an ordering on the vertices of $\widehat{\mathcal{Q}}$.
We label the mutable vertices of $\widehat{\mathcal{Q}}$ according to the linear order $$1=s(\alpha_1) \hspace{1em} <  t(\alpha_1) < t(\beta_1) \hspace{1em} < t(\alpha_2) < t(\beta_2) \hspace{1em} < \dots  \hspace{1em} < t(\alpha_n) < t(\beta_n) = N$$ and the frozen vertices of $\widehat{\mathcal{Q}}$ according to the linear order $$N+1=s(\alpha_1)^\prime \hspace{1em} <  t(\alpha_1)^\prime < t(\beta_1)^\prime \hspace{1em} < t(\alpha_2)^\prime < t(\beta_2)^\prime \hspace{1em} < \dots  \hspace{1em} < t(\alpha_n)^\prime < t(\beta_n)^\prime = 2N.$$ We call this the \textbf{standard ordering} of the vertices of $\widehat{\mathcal{Q}}$.

%\textcolor{blue}{Update remark and examples below}

\begin{example}\label{branchesexample}
Let $\widehat{\mathcal{Q}}$ denote the signed irreducible type $\mathbb{A}_{23}$ framed quiver shown in Figure~\ref{A23prime}. We have labeled the vertices of $\widehat{\mathcal{Q}}$ in Figure~\ref{A23prime} according to the standard ordering. Note that we have suppressed the arrow labels in Figure~\ref{A23prime}.

\begin{figure}[h]
$$\begin{xy} 0;<1pt,0pt>:<0pt,-1pt>:: 
(0,90) *+{\textcolor{green}{1}} ="0",
(0,60) *+{\textcolor{blue}{1^\prime}} = "23",
(30,30) *+{\textcolor{blue}{2^\prime}} = "24",
(60,60) *+{\textcolor{blue}{3^\prime}} = "25",
(60,90) *+{\textcolor{green}{3}} ="1",
(30,60) *+{\textcolor{green}{2}} ="2",
(90,120) *+{\textcolor{green}{5}} ="3", %*\frm{-o},
(90,150) *+{\textcolor{blue}{5^\prime}} = "27",
(120,90) *+{\textcolor{green}{4}} ="4",
(120,60) *+{\textcolor{blue}{4^\prime}} = "26",
(150,60) *+{\textcolor{green}{6}} ="5",
(150,30) *+{\textcolor{blue}{6^\prime}} = "28",
(180,30) *+{\textcolor{green}{8}} ="6",
(180,0) *+{\textcolor{blue}{8^\prime}} = "29",
(210,60) *+{\textcolor{green}{9}} ="7",
(240,60) *+{\textcolor{blue}{9^\prime}} = "43",
(180,90) *+{\textcolor{green}{7}} ="8",
(180,120) *+{\textcolor{blue}{7^\prime}} = "38",
(210,0) *+{\textcolor{green}{10}} ="9",
(210,-30) *+{\textcolor{blue}{10^\prime}} = "30",
(240,30) *+{\textcolor{green}{11}} ="10",
(240,0) *+{\textcolor{blue}{11^\prime}} = "31",
(300,30) *+{\textcolor{green}{12}} ="11",
(300,0) *+{\textcolor{blue}{12^\prime}} = "32",
(270,60) *+{\textcolor{green}{13}} ="12",
(270,90) *+{\textcolor{blue}{13^\prime}} = "39",
(330,0) *+{\textcolor{green}{14}} ="13",
(330,-30) *+{\textcolor{blue}{14^\prime}} = "33",
(360,30) *+{\textcolor{green}{15}} ="14",
(360,0) *+{\textcolor{blue}{15^\prime}} = "34",
(420,30) *+{\textcolor{green}{16}} ="15",
(420,0) *+{\textcolor{blue}{16^\prime}} = "35",
(390,60) *+{\textcolor{green}{17}} ="16",
(390,90) *+{\textcolor{blue}{17^\prime}} = "40",
(450,60) *+{\textcolor{green}{18}} ="17",
(450,30) *+{\textcolor{blue}{18^\prime}} = "36",
(420,90) *+{\textcolor{green}{19}} ="18",
(420,120) *+{\textcolor{blue}{19^\prime}} = "41",
(480,90) *+{\textcolor{green}{20}} ="19",
(480,60) *+{\textcolor{blue}{20^\prime}} = "37",
(450,120) *+{\textcolor{green}{21}} ="20",
(450,150) *+{\textcolor{blue}{21^\prime}} = "42",
(210, 120) *+{\textcolor{green}{23}} = "21",
(210,150) *+{\textcolor{blue}{23^\prime}} = "45",
%*\frm{-o};,
(240, 90), *+{\textcolor{green}{22}} = "22",
(240,120) *+{\textcolor{blue}{22^\prime}} = "44",
%*\frm{-o};,
"1", {\ar_{T_1}"0"},
"0", {\ar"2"},
"2", {\ar"1"},
"3", {\ar"1"},
"1", {\ar_{T_2}"4"},
"4", {\ar"3"},
"4", {\ar"5"},
"8", {\ar_{T_3}"4"},
"5", {\ar"6"},
"7", {\ar_{T_4}"5"},
"5", {\ar"8"},
"6", {\ar"7"},
"6", {\ar"9"},
"10", {\ar_{T_5}"6"},
"10", {\ar"6"},
"9", {\ar"10"},
"10", {\ar_{T_6}"11"},
"10", {\ar"11"},
"12", {\ar"10"},
"11", {\ar"12"},
"11", {\ar"13"},
"14", {\ar_{T_7}"11"},
"13", {\ar"14"},
"14", {\ar_{T_8}"15"},
"16", {\ar"14"},
"15", {\ar"16"},
"16", {\ar_{T_9}"17"},
"18", {\ar"16"},
"17", {\ar"18"},
"18", {\ar_{T_{10}}"19"},
"20", {\ar"18"},
"19", {\ar"20"},
"8", {\ar_{T_{11}}"22"},
"22", {\ar"21"},
"21", {\ar"8"},
%"8", {\ar@{-->}_{T_{11}}^{\alpha_{11}}"22"},
%"22", {\ar@{-->}"21"},
%"21", {\ar@{-->}"8"},
"0", {\ar "23"},
"2", {\ar "24"},
"1", {\ar "25"},
"4", {\ar "26"},
"3", {\ar "27"},
"5", {\ar "28"},
"6", {\ar "29"},
"9", {\ar "30"},
"10", {\ar "31"},
"11", {\ar "32"},
"13", {\ar "33"},
"14", {\ar "34"},
"15", {\ar "35"},
"17", {\ar "36"},
"19", {\ar "37"},
"8", {\ar "38"},
"7", {\ar "43"},
"12", {\ar "39"},
"16", {\ar "40"},
"18", {\ar "41"},
"20", {\ar "42"},
"21", {\ar "45"},
"22", {\ar "44"},
\end{xy}$$
\caption{The framed quiver of a signed irreducible type $\mathbb{A}_{23}$ quiver with vertices labeled using the standard ordering.}
\label{A23prime}
\end{figure}
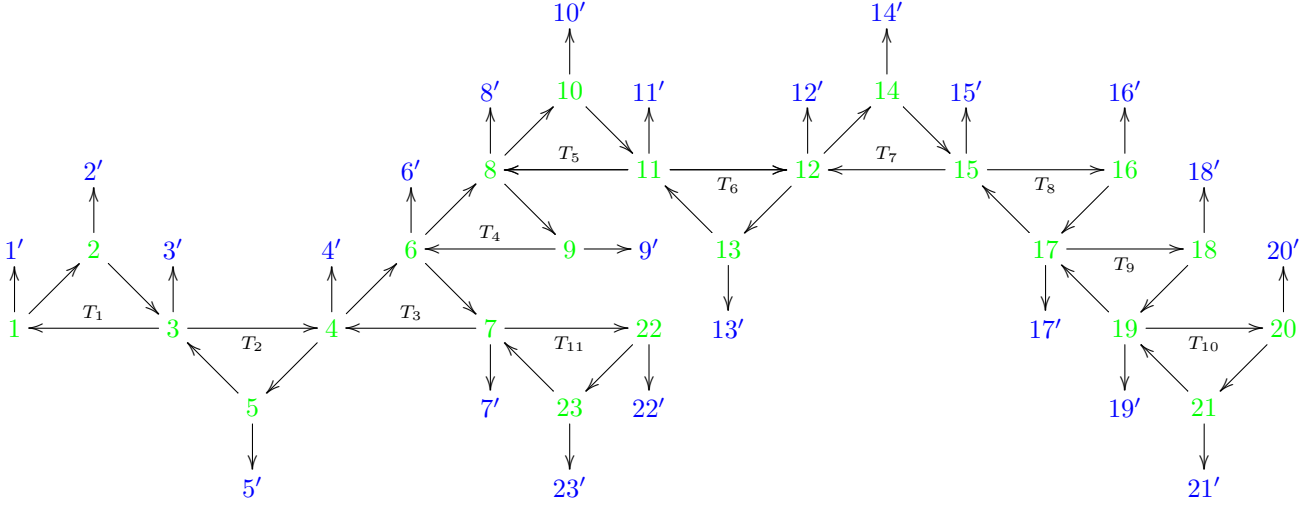

\end{example}

\section{Associated Mutation Sequences} \label{Sec:MutSeq}

Throughout this section we work with a given signed irreducible type $\mathbb{A}$ quiver $\mathcal{Q}$ with respect to a fixed root 3-cycle $S$. Based on the data defining the signed irreducible type $\mathbb{A}$ quiver $\mathcal{Q}$, we construct a mutation sequence of $Q$ that we will call the $\textbf{associated mutation sequence}$ of $\mathcal{Q}$. After that we state our main theorem which says that the associated mutation sequence of $\mathcal{Q}$ is a maximal green sequence (see Theorem~\ref{main2}). We then apply our main theorem to construct a maximal green sequence for any type $\mathbb{A}$ quiver $Q$ (see Corollary~\ref{anytypeA}).

\subsection{Definition of Associated Mutation Sequences} \label{Sec:Irr}

Before defining the associated mutation sequence of $\mathcal{Q}$, we need to develop some terminology. 

\begin{definition}\label{specialT}
Let $T_k$ be a signed 3-cycle of $\mathcal{Q}$. Define the sequence of vertices $(x({0,k}),x({1,k}), \ldots, x({d,k}))$ of $\mathcal{Q}$ by
$$\begin{array}{rcl}
x({j,k}) & := & \left\{\begin{array}{rcl}z_k &:& \text{if $j = 0$,}\\
t(\gamma_{m_j}) &:& \text{$\gamma_{m_j}$ is the unique arrow of $\mathcal{Q}$ satisfying $s(\gamma_{m_j}) = x({{j-1,k}})$}.
\end{array}\right.
\end{array}$$

\noindent Note that such a sequence is necessarily finite, and we choose $d$ to be maximal, or equivalently so that \text{sgn}$(S_{m_d}) = +$. When $k$ is clear from context, we abbreviate $x(s,k)$ as $x(s)$.  It follows from the definition of $x(j)$ that $x(j) = x_{m_j}$ for any $j \in [d]$, and that $x(d) = x_1$ or $y_{m_d-1}$. However, $x(0)$ can be expressed as $x_s$ for some $s \in [n]$ only if $\deg(x(0)) = \deg(z_k) = 4.$ See Figure~\ref{special}. Note that if sgn$(S_k)=+$, then this sequence of vertices is simply $(x(0),x(1))$.

\end{definition}

\begin{figure}
$$\begin{array}{c}
\xymatrix
{
& & y_{m_d} \ar@{->}[dr] & &\\
& \underbracket{x(d)}_{=\text{tr}(y_k)} \ar@{->}[ur] & & x({d-1})\ar@{->}[ll]_{T_{m_d}}^{\gamma_{m_d}}\ar@{->}[rr]_{T_{m_{d-1}}} & & y_{m_{d-1}} \ar@{->}[dl]\\
&  & & & x({d-2}) \ar@{->}^{\gamma_{m_{d-1}}}[ul]\\
& & & & & \ddots\\
& & & & & & x({1}) \ar@{->}[rr]_{T_k=T_{m_1}} & & y_{k} \ar@{->}[dl]\\
& & & & & & & x({0})\ar@{->}^{\gamma_{m_1}}[ul]\\
}
\end{array}$$
\caption{The sequence $(x(0), x(1), \ldots, x(d))$ defined by $T_k$ where $\text{sgn}(S_k) = -.$ The transport of $y_k$ is also illustrated for quivers where there is no sequence of the form in (\ref{alphabetaseq}) of Definition \ref{transportDEF}.}
\label{special}
\end{figure}
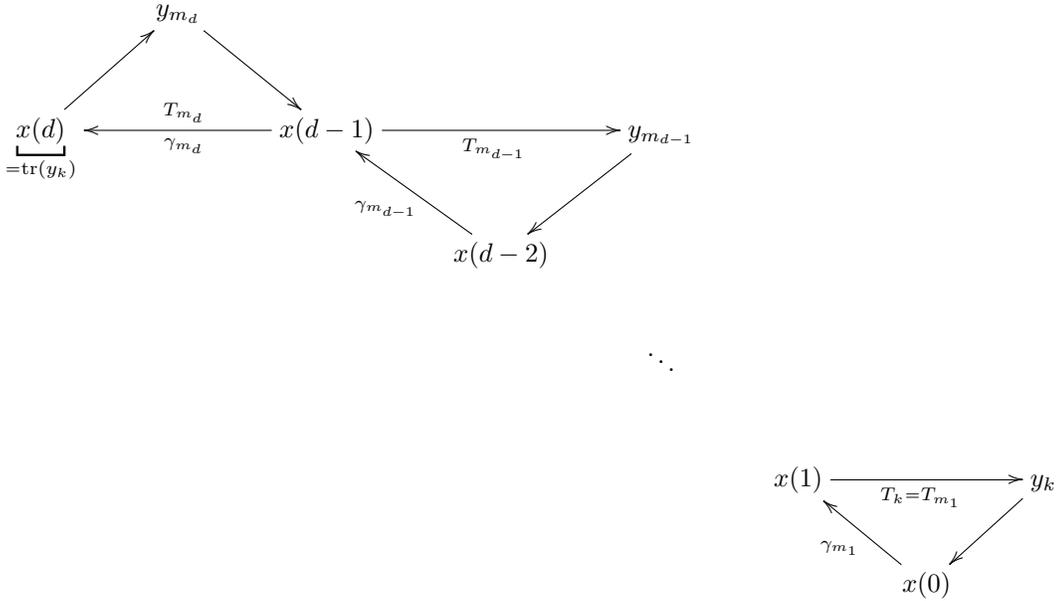

\begin{definition}\label{transportDEF}
For any vertex $v$ of $Q$ which can be expressed as $v=y_k$, i.e. as a point of some signed 3-cycle $T_k$ of $\mathcal{Q}$, we define the {\bf transport} of $y_k$ by the following procedure.  We will denote the image of the transport as $\text{tr}(v)$.  Consider the full subquiver of $\mathcal{Q}$ on the vertices of the signed $3$-cycles $T_1$,$T_2$,$\dots$,$T_k$, which we denote by $\mathcal{Q}_k$.  Inside this subquiver,  %Consider the full subquiver utilizing 3-cycles $T_1, T_2, \ldots, T_k$.

\begin{itemize}
\item[$i)$]  move from $y_k$ along $\beta_k$ to $t(\beta_k)$,
\item[$ii)$] move from $t(\beta_k)$ along the sequence of arrows $\gamma_{m_1}, \gamma_{m_2}, \ldots, \gamma_{m_d}$ of maximal length to $t(\gamma_{m_d})$ where the integers $\{m_i\}_{i \in [d]}$ are those defined by the signed 3-cycle $T_k$ (see Definition~\ref{specialT}),
%& \text{(note that this sequence is uniquely defined since each vertex $i$ of $\mathcal{Q}$ is of the form $v = $}
\item[$iii)$] if possible, move from $t(\gamma_{m_d})$ to $\text{tr}(y_k) := t(\beta_{k_s})$ along the sequence of arrows of the form shown in (\ref{alphabetaseq}) each of which belongs to a signed 3-cycle $T_i$ for some $i < k$, under the assumption that the
%%
%%. This sequence must have the}\\ & \text{properties that the subsequence %%$\alpha_{k_\ell}, \alpha_{k_{\ell+1}}$ appears as late as possible,
%%
subsequences $A_1$ and $A_2$ are of maximal length, and $A_2$ must be nonempty.  If no such sequence exists of this form, we instead define $\text{tr}(y_k) := t(\gamma_{m_d})$.
\end{itemize}

\begin{eqnarray}\text{$\underbracket{\alpha_{k_1},\beta_{k_1},\alpha_{k_2},\beta_{k_2}, \ldots, \alpha_{k_{\ell-1}}, \beta_{k_{\ell-1}}}_{A_1}, \alpha_{k_\ell},\underbracket{\alpha_{k_{\ell+1}}, \beta_{k_{\ell+1}}, \alpha_{k_{\ell+2}}, \beta_{k_{\ell+2}}, \ldots, \alpha_{k_{s}}, \beta_{k_s}}_{A_2}$} \label{alphabetaseq} \end{eqnarray}

\noindent See Figures~\ref{special}, \ref{st1}, and \ref{st2}.
\end{definition}

\begin{figure}
$$\begin{xy} 0;<1pt,0pt>:<0pt,-1pt>:: 
(0,30) *+{x({d})} ="0",
(30,0) *+{y_{k_1}} ="1",
(60,30) *+{x{(d-1)}} ="2",
(240,90) *+{y_{k_{\ell+1}}} ="3",
(270,120) *+{x_{k_{\ell+2}}} ="4",
(120,30) *+{y_{k_2}} ="5",
(90,60) *+{x({d-2})} ="6",
(120,90) *+{\ddots} ="7",
(270,240) *+{x(0)} ="8",
(240,210) *+{x(1)} ="9",
(300,210) *+{y_{k}} ="10",
(330,120) *+{y_{{k_{\ell+2}}}} ="11",
(300,150) *+{x_{{k_{\ell+3}}}} ="12",
(330,180) *+{\ddots} ="13",
(390,240) *+{z_{k_s}} ="14",
(360,210) *+{x_{k_s}} ="15",
(420,210) *+{y_{k_s}} ="16",
(210,120) *+{x_{k_{\ell+1}}} ="17",
(150,120) *+{x(i)} ="18",
(180,150) *+{x(i-1)} ="19",
(210,180) *+{\ddots} ="20",
(390,250) *+{\equalto{}{}} ="21",
(390,260) *+{\text{tr}(y_k)} ="22",
"0", {\ar^{\alpha_{k_1}}"1"},
"2", {\ar^{\gamma_{m_d}}_{T_{m_d}=T_{k_1}}"0"},
"1", {\ar^{\beta_{k_1}}"2"},
"2", {\ar^{\alpha_{k_2}}_{T_{m_{d-1}}}"5"},
"6", {\ar^{\gamma_{m_{d-1}}}"2"},
"3", {\ar^{\beta_{k_{\ell+1}}}"4"},
"17", {\ar^{\alpha_{k_{\ell+1}}}"3"},
"4", {\ar^{\alpha_{k_{\ell+2}}}_{T_{{k_{\ell+2}}}}"11"},
"12", {\ar"4"},
"4", {\ar_{T_{k_{\ell+1}}}"17"},
"5", {\ar^{\beta_{k_2}}"6"},
"8", {\ar^{\gamma_{m_1}}"9"},
"10", {\ar^{\beta_k}"8"},
"9", {\ar_{T_{k} = T_{m_1}}"10"},
"11", {\ar^{\beta_{k_{\ell+2}}}"12"},
"14", {\ar"15"},
"16", {\ar^{\beta_{k_{s}}}"14"},
"15", {\ar^{\alpha_{k_{s}}}_{T_{k_s}}"16"},
"18", {\ar^{\alpha_{k_\ell}}_{T_{m_i} = T_{k_\ell}}"17"},
"17", {\ar"19"},
"19", {\ar^{\gamma_{m_i}}"18"},
\end{xy}$$
\caption{The sequence of arrows one follows to compute the transport of $y_k$. Note that in this case, the sequence $A_1$ is non-empty.}
\label{st1}
\end{figure}
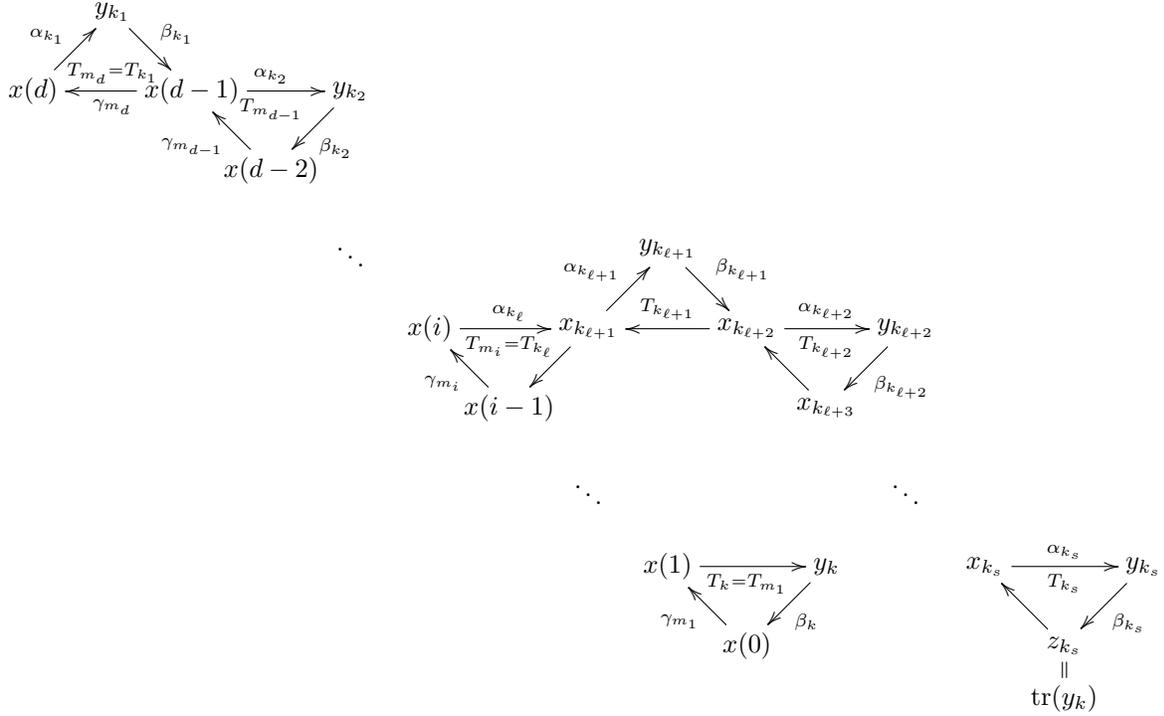

\begin{figure}[t]
$$\begin{xy} 0;<1pt,0pt>:<0pt,-1pt>:: 
(-10,65) *+{x(d)} ="0",
(30,30) *+{x_{k_2}} ="1",
(70,65) *+{x(d-1)} ="2",
(75,-10) *+{y_{k_2}} ="3",
(120,30) *+{x_{k_3}} ="4",
(140,65) *+{y_{m_{d-1}}} ="5",
(105,100) *+{x({d-2})} ="6",
(110,120) *+{\vdots} ="7",
(125,150) *+{x({1})} ="8",
(195,150) *+{y_{k}} ="9",
(160,185) *+{x(0)} ="10",
(210,30) *+{y_{k_3}} ="11",
(165,70) *+{x_{k_4}} ="12",
(190,90) *+{\ddots} ="13",
(210,120) *+{x_{k_s}} ="14",
(300,120) *+{y_{k_s}} ="15",
(255,160) *+{z_{k_s}} ="16",
(255,170) *+{\equalto{}{}} ="21",
(255,180) *+{\text{tr}(y_k)} ="22",
"0", {\ar^{\alpha_{k_1}}"1"},
"2", {\ar^{\gamma_{m_d}}_{T_{m_d} = T_{k_1}}"0"},
"1", {\ar"2"},
"1", {\ar^{\alpha_{k_2}}"3"},
"4", {\ar_{T_{k_2}}"1"},
"2", {\ar_{T_{m_{d-1}}}"5"},
"6", {\ar^{\gamma_{m_{d-1}}}"2"},
"3", {\ar^{\beta_{k_2}}"4"},
"4", {\ar^{\alpha_{k_3}}_{T_{k_3}}"11"},
"12", {\ar"4"},
"5", {\ar"6"},
"8", {\ar_{T_{k}=T_{m_1}}"9"},
"10", {\ar^{\gamma_{m_1}}"8"},
"9", {\ar^{\beta_k}"10"},
"11", {\ar^{\beta_{k_3}}"12"},
"14", {\ar^{\alpha_{k_s}}_{T_{k_s}}"15"},
"16", {\ar"14"},
"15", {\ar^{\beta_{k_s}}"16"},
\end{xy}$$
\caption{The sequence of arrows one follows to compute the transport of $y_k$. Note that in this case, the sequence $A_1$ is empty.}
\label{st2}
\end{figure}
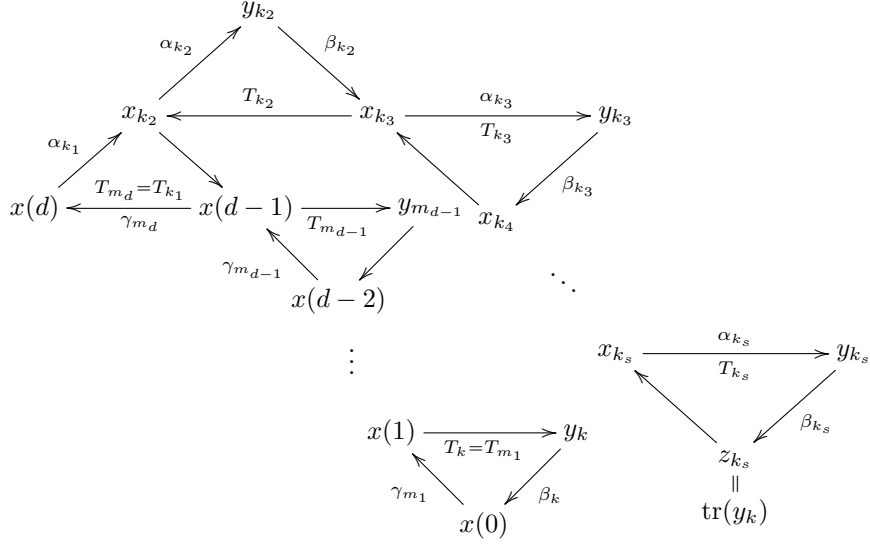

We now use the above notation to define the \textbf{associated mutation sequence} of $\mathcal{Q}$.

\begin{definition}
Let $\mathcal{Q} = (Q,S,\{T_i\}_{i \in [n]})$ be a signed irreducible type $\mathbb{A}$ quiver. Define $\underline{\mu}_0 :=\mu_{x_1}.$ For each $k \in [n]$ we define a sequence of mutations, denoted $\underline{\mu}_k$, as follows. Note that when we write $\emptyset$ below we mean the empty mutation sequence. We define

$$\underline{\mu}_k := \underline{\mu}_A\circ \underline{\mu}_B\circ \underline{\mu}_C\circ \underline{\mu}_D$$ where $\underline{\mu}_A, \underline{\mu}_B, \underline{\mu}_C,$ and $\underline{\mu}_D$ are mutation sequences defined in the following way
$$\begin{array}{rrlll}
\underline{\mu}_D & := & \mu_{y_{k}} \nonumber \\
\underline{\mu}_C & := & \mu_{x({{d-1}})}\circ\cdots\circ\mu_{x({1})}\circ\mu_{x({0})}
\\
\underline{\mu}_B & := & \left\{
            \begin{array}{lrl}
            \mu_{\text{tr}(x(d))}& : & \text{if $x(d) \neq x_1$}\\
            \emptyset  & : & \text{if $x(d) = x_1$}\\
             \end{array}\right. \nonumber \\
\underline{\mu}_A & := & \mu_{\text{tr}(y_k)}.
\end{array}$$
%$\underline{\mu}_D$ are mutation sequences defined in the following way
%$$\begin{array}{rrlll}
%\underline{\mu}_D & := & \mu_{y_{k}} \nonumber \\
%\underline{\mu}_C & := & \mu_{x({i_{d-1}})}\circ\cdots\circ\mu_{x({i_1})}
%\\
%{\color{red}(CUT)}\underline{\mu}_C & := & \left\{
%            \begin{array}{lrl}
%            \mu_{x({i_{d-1}})}\circ\cdots\circ\mu_{x({i_1})}& : & \text{{\color{blue}if} $\text{sgn}(S_k) = -$}\\
%            \emptyset  & : & \text{{\color{blue}if} $\text{sgn}(S_k) = +$} \\
%             \end{array}\right. \nonumber \\
%\underline{\mu}_B & := & \left\{
%            \begin{array}{lrl}
%            \mu_{tr(x(i_d))}& : & \text{if $x(i_d) \neq x_1$}\\
%            \emptyset  & : & \text{if $x(i_d) = x_1$}\\
%             \end{array}\right. \nonumber \\
%\underline{\mu}_A & := & \mu_{tr(y_k)}.
%\end{array}$$
Note that $x(d)=x_1$ or $y_{m_d-1}$ so the transport $\text{tr}(x(d))$ in $\underline{\mu}_B$ is well-defined.
Now define the \textbf{associated mutation sequence} of $\mathcal{Q}$ to be $\underline{\mu}:= \underline{\mu}_n\circ \cdots \circ \underline{\mu}_1\circ\underline{\mu}_0.$ We will denote the associated mutation sequence of $\mathcal{Q}$ by $\underline{\mu}$ or by ${\underline{\mu}}^{\mathcal{Q}}$ if it is not clear from context which signed irreducible type $\mathbb{A}$ quiver defines $\underline{\mu}$. At times it will be useful to write $\underline{\mu}_k = \underline{\mu}_{A(k)}\circ \underline{\mu}_{B(k)}\circ \underline{\mu}_{C(k)}\circ \underline{\mu}_{D(k)}$.
\end{definition}

%%PERHAPS THIS CAN BE DELETED
\begin{figure}
$$\begin{xy} 0;<1pt,0pt>:<0pt,-1pt>:: 
(50,50) *+{3} ="0",
(125,0) *+{6} ="1",
(200,50) *+{8} ="2",
(225,100) *+{14} ="3",
(150,50) *+{7} ="4",
(175,100) *+{9} ="5",
(200,150) *+{15} ="6",
(225,200) *+{17} ="7",
(250,150) *+{16} ="8",
(275,200) *+{18} ="9",
(25,0) *+{2} ="10",
(225,0) *+{10} ="11",
(250,50) *+{11} ="12",
(300,150) *+{20} ="13",
(325,200) *+{21} ="14",
(75,100) *+{5} ="15",
(0,50) *+{1} ="16",
(250,250) *+{19} ="17",
(100,50) *+{4} ="18",
(275,100) *+{13} ="19",
(300,50) *+{12} ="20",
(325,100) *+{22} ="21",
(350,150) *+{23} ="22",
(350,50) *+{24} ="23",
(375,100) *+{25} ="24",
(425,100) *+{26} ="25",
(400,150) *+{27} ="26",
(375,200) *+{30} = "27",
(350, 250) *+{31} = "28",
(425, 200) *+{29} = "29",
(450, 150) *+{28} = "30",
"10", {\ar"0"},
"15", {\ar"0"},
"0", {\ar_{T_1}"16"},
"0", {\ar_{T_2}"18"},
"1", {\ar"4"},
"18", {\ar"1"},
"4", {\ar_{T_4}"2"},
"2", {\ar"5"},
"2", {\ar"11"},
"12", {\ar_{T_5}"2"},
"5", {\ar_{T_7}"3"},
"3", {\ar"6"},
"5", {\ar"4"},
"4", {\ar_{T_3}"18"},
"6", {\ar"5"},
"7", {\ar"6"},
"6", {\ar_{T_8}"8"},
"8", {\ar"7"},
"7", {\ar_{T_9}"9"},
"17", {\ar"7"},
"9", {\ar"13"},
"14", {\ar_{T_{10}}"9"},
"9", {\ar"17"},
"16", {\ar"10"},
"11", {\ar"12"},
"19", {\ar"12"},
"12", {\ar_{T_6}"20"},
"13", {\ar"14"},
"13", {\ar"21"},
"22", {\ar_{T_{11}}"13"},
"18", {\ar"15"},
"20", {\ar"19"},
"21", {\ar"22"},
"21", {\ar"23"},
"24", {\ar_{T_{12}}"21"},
"23", {\ar"24"},
"24", {\ar_{T_{13}}"25"},
"26", {\ar"24"},
"25", {\ar"26"},
"14", {\ar_{T_{15}}"27"},
"27", {\ar"28"},
"28", {\ar"14"},
"29", {\ar"26"},
"26", {\ar_{T_{14}}"30"},
"30", {\ar"29"},
\end{xy}$$
\caption{The signed irreducible type $\mathbb{A}_{31}$ quiver described in Example~\ref{assocmutexamples}.}
\label{assocmutquiver}
\end{figure}
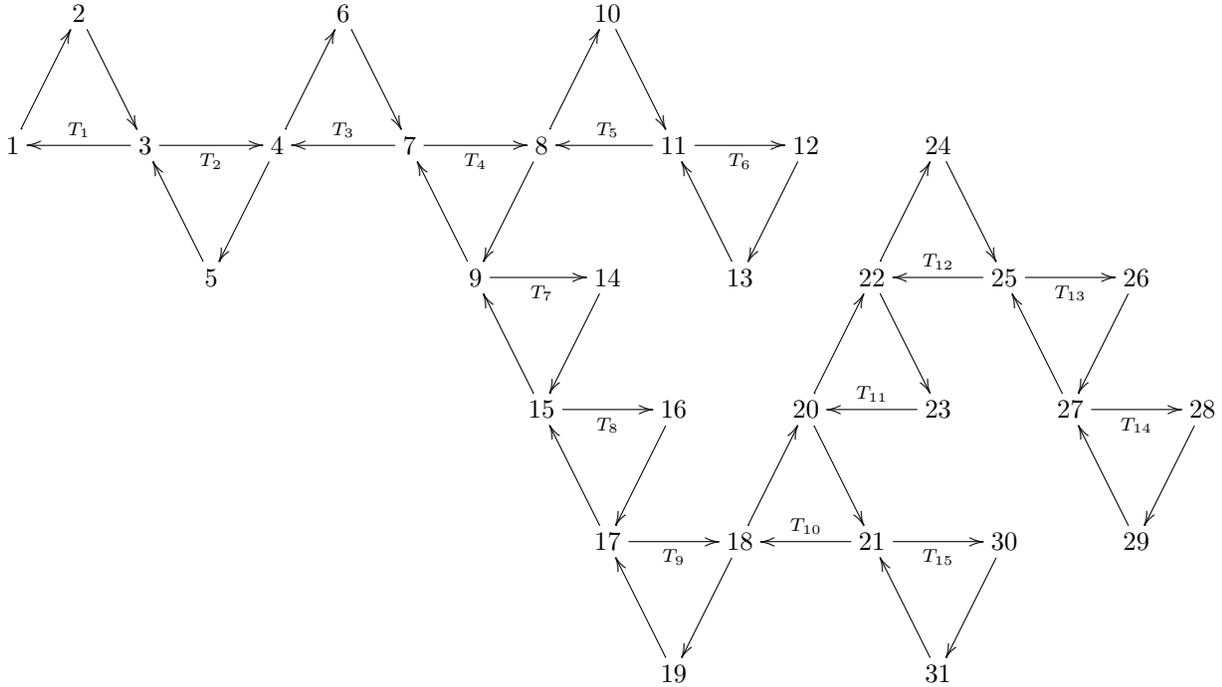

\begin{example}\label{assocmutexamples}
Let $\mathcal{Q}$ denote the signed irreducible type $\mathbb{A}_{31}$ quiver appearing in Figure~\ref{assocmutquiver}. In the table in Figure~\ref{assocmutatseq31}, we 
describe $\underline{\mu}_i$ for each $0 \leq i \leq 15$. Thus, the associated mutation sequence defined by $\mathcal{Q}$ is $\underline{\mu}_{15}\circ\underline{\mu}_{14}\circ \cdots \circ \underline{\mu}_{1}\circ \underline{\mu}_{0}$.

\begin{figure}
$$\begin{tabular}{llcll} 
$i$ & $\underline{\mu}_i$ & ~ & $i$ & $\underline{\mu}_i$\\
\hline
0  & $\mu_1$ & ~ & 8  & $\mu_{13}\circ\mu_{1}\circ\mu_{7}\circ\mu_9\circ\mu_{15}\circ\mu_{17}\circ\mu_{16}$ \\
1  & $\mu_1\circ \mu_3\circ \mu_2$ & ~ & 9 & $\mu_{13}\circ\mu_{1}\circ\mu_{7}\circ\mu_9\circ\mu_{15}\circ\mu_{17}\circ\mu_{19}\circ\mu_{18}$ \\
2  & $\mu_1\circ\mu_{3}\circ\mu_5\circ\mu_4$ & ~ & 10 & $\mu_{18}\circ\mu_{13}\circ\mu_{21}\circ\mu_{20}$ \\
3  & $\mu_4\circ\mu_{1}\circ\mu_7\circ\mu_6$ & ~ & 11 & $\mu_{20}\circ\mu_{18}\circ\mu_{23}\circ\mu_{22}$ \\
4  & $\mu_4\circ\mu_{1}\circ\mu_{7}\circ\mu_9\circ\mu_8$ & ~ & 12 & $\mu_{22}\circ\mu_{20}\circ\mu_{25}\circ\mu_{24}$ \\
5  & $\mu_8\circ\mu_{4}\circ\mu_{11}\circ\mu_{10}$ & ~ & 13 & $\mu_{22}\circ\mu_{20}\circ\mu_{25}\circ\mu_{27}\circ\mu_{26}$\\
6  & $\mu_8\circ\mu_{4}\circ\mu_{11}\circ\mu_{13}\circ\mu_{12}$ & ~ & 14 & $\mu_{22}\circ\mu_{20}\circ\mu_{25}\circ\mu_{27}\circ\mu_{29}\circ\mu_{28}$ \\
7  & $\mu_{13}\circ\mu_{1}\circ\mu_{7}\circ\mu_9\circ\mu_{15}\circ\mu_{14}$ & ~ & 15 & $\mu_{23} \circ \mu_{13} \circ \mu_{21}\circ\mu_{31}\circ\mu_{30}$
\end{tabular}$$\caption{The associated mutation of the signed irreducible type $\mathbb{A}_{31}$ quiver in Figure~\ref{assocmutquiver}.}\label{assocmutatseq31}\end{figure}

\end{example}

We now arrive at the main result of this paper.

\begin{theorem}\label{main2}
If $\mathcal{Q} = (Q,S,\{T_i\}_{i \in [n]})$ is a signed irreducible type $\mathbb{A}$ quiver with associated mutation sequence $\underline{\mu}$, then we have $\underline{\mu} \in \text{green}\left(Q\right)$.
\end{theorem}

We present the proof Theorem~\ref{main2} in the next section, as the argument requires some additional tools.

\begin{remark}\label{mubarlength}
For a given irreducible type $\mathbb{A}$ quiver with at least one 3-cycle, the length of $\underline{\mu}$ can vary depending on the choice of leaf 3-cycle. Let $Q$ denote the irreducible type $\mathbb{A}_7$ quiver shown in Figure~\ref{A7}. By choosing the 3-cycle 1,2,3 (resp. 5,6,7) to be the root 3-cycle, one obtains the signed irreducible type $\mathbb{A}$ quiver $\mathcal{Q}_1$ (resp. $\mathcal{Q}_2$) shown in Figure~\ref{A7signed}. Then the associated mutations of $\mathcal{Q}_1$ and $\mathcal{Q}_2$ are \begin{eqnarray}
\underline{\mu}^{\mathcal{Q}_1} & = & \mu_1\circ\mu_3\circ\mu_5\circ\mu_7\circ\mu_6\circ\mu_1\circ\mu_3\circ\mu_5\circ\mu_4\circ\mu_1\circ\mu_3\circ\mu_2\circ\mu_1 \nonumber \\
\underline{\mu}^{\mathcal{Q}_2} & = & \mu_3\circ\mu_6\circ\mu_2\circ\mu_1\circ\mu_6\circ\mu_5\circ\mu_4\circ\mu_3\circ\mu_6\circ\mu_5\circ\mu_7\circ\mu_6. \nonumber
\end{eqnarray}

\noindent Furthermore, the maximal green sequence produced by Theorem~\ref{main2}, i.e. the associated mutation sequence of a each signed irreducible type $\mathbb{A}$ quiver associated to $Q$, is not necessarily a minimal length maximal green sequence. For example, it is easy to check that $\underline{\nu} = \mu_3\circ\mu_1\circ\mu_4\circ\mu_3\circ\mu_7\circ\mu_6\circ\mu_2\circ\mu_5\circ\mu_1\circ\mu_4\circ\mu_7$ is a maximal green sequence of $Q$, which is of length less than that of $\underline{\mu}^{\mathcal{Q}_1}$ or $\underline{\mu}^{\mathcal{Q}_2}$. 
\end{remark}

\begin{remark}\label{cormierresult1}
While we were revising this paper, Cormier, Dillery, Resh, Serhiyenko, and Whelan \cite{CDRSW} found a construction of minimal length maximal green sequences for type $\mathbb{A}$ quivers.  Therein, they construct a maximal green sequence for any irreducible type $\mathbb{A}$ quiver $Q$ with at least one 3-cycle by mutating first at all leaf 3-cycles of $Q$, then mutating at the 3-cycles connected to the leaf 3-cycles of $Q$, continuing this process, and then mutating a subsequence of the vertices in reverse.  This contrasts with the maximal green sequences we construct in this paper, which involve some extraneous steps but whose process can be defined locally and inductively, akin to writing down the reduced word for a permutation using bubble sort.
\end{remark}

\begin{figure}[h]
$$\begin{array}{rcl} \raisebox{-.2in}{$Q$} & \raisebox{-.2in}{=} & \begin{xy} 0;<1pt,0pt>:<0pt,-1pt>:: 
(20,0) *+{2} ="0",
(60,0) *+{4} ="1",
(100,0) *+{6} ="2",
(0,20) *+{1} ="3",
(40,20) *+{3} ="4",
(80,20) *+{5} ="5",
(120,20) *+{7.} ="6",
"3", {\ar"0"},
"0", {\ar"4"},
"4", {\ar"1"},
"1", {\ar"5"},
"5", {\ar"2"},
"2", {\ar"6"},
"4", {\ar"3"},
"5", {\ar"4"},
"6", {\ar"5"},
\end{xy}\end{array}$$
\caption{}
\label{A7}
\end{figure}

\begin{figure}[h]
$$\begin{xy} 0;<1pt,0pt>:<0pt,-1pt>:: 
(20,0) *+{2} ="0",
(80,20) *+{4} ="1",
(100,40) *+{6} ="2",
(0,20) *+{1} ="3",
(-20,20) *+{=} ="14",
(-40, 20) *+{\mathcal{Q}_1} ="15",
(40,20) *+{3} ="4",
(60,40) *+{5} ="5",
(80,60) *+{7} ="6",
(120,20) *+{6} ="7",
(140,0) *+{7} ="8",
(160,20) *+{5} ="9",
(200,20) *+{3} ="10",
(180,40) *+{4} ="11",
(220,0) *+{1} ="12",
(240,20) *+{2} ="13",
(260,20) *+{=} ="16",
(280, 20) *+{\mathcal{Q}_2} ="17",
"3", {\ar"0"},
"0", {\ar"4"},
"4", {\ar"1"},
"1", {\ar"5"},
"5", {\ar"2"},
"2", {\ar"6"},
"4", {\ar"3"},
"5", {\ar"4"},
"6", {\ar"5"},
"7", {\ar"8"},
"9", {\ar"7"},
"8", {\ar"9"},
"9", {\ar"10"},
"11", {\ar"9"},
"10", {\ar"11"},
"10", {\ar"12"},
"13", {\ar"10"},
"12", {\ar"13"},
\end{xy}$$
\caption{The two signed irreducible type $\mathbb{A}$ quivers that can be obtained from $Q$.}
\label{A7signed}
\end{figure}
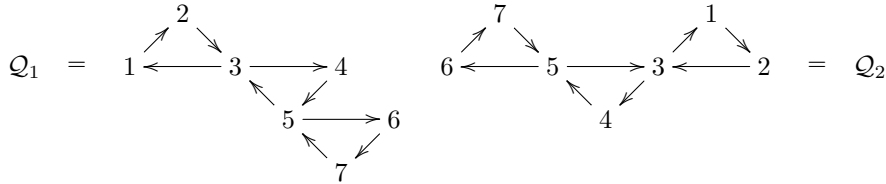

We conclude this section by using Theorem~\ref{main2} to show that any type $\mathbb{A}$ quiver has at least one maximal green sequence.

\begin{corollary}\label{anytypeA}
Let $Q \in \text{Mut}(1 \to 2 \to \cdots \to n).$ Then $Q$ has a maximal green sequence.
\end{corollary}
\begin{proof}
By Corollary~\ref{cor:tsurf}, $Q$ can be expressed as a direct sum of irreducible type $\mathbb{A}$ quivers $\{Q_1, Q_2, \ldots, Q_k\}.$ In other words, 
$$Q = Q_1 \oplus_{(a_{(1,1)},a_{(2,1)},\dots,a_{(d_1,1)})}^{(b_{(1,1)},b_{(2,1)},\dots, b_{(d_1,1)})}Q_2^\prime \mathrm{~where~} Q_j^\prime = Q_j \oplus_{(a_{(1,j)},a_{(2,j)},\dots,a_{(d_j,j)})}^{(b_{(1,j)},b_{(2,j)},\dots, b_{(d_j,j)})}Q_{j+1}^\prime \mathrm{~for~} 2 \leq j \leq k-1, \mathrm{~and~}Q_{k}^\prime = Q_k.$$

If $Q_i$ is of type $\mathbb{A}_1$ and $a_i$ denotes the unique vertex of $Q_i$, then $\underline{\mu}^{(i)} := \mu_{a_i}$ is a maximal green sequence of $Q_i$. {If $Q_i$ is not of type $\mathbb{A}_1$, then we form a signed irreducible type $\mathbb{A}$ quiver, $\mathcal{Q}^{(i)}$, associated to $Q_i$ by picking a leaf $3$-cycle.} Now by Theorem~\ref{main2}, the associated mutation sequence of $\mathcal{Q}^{(i)}$, denoted $\underline{\mu}^{(i)}$, is a maximal green sequence of $Q_i$. By applying Proposition \ref{tcolordirsum} iteratively, we obtain $\underline{\mu} = \underline{\mu}^{(k)}\circ \cdots \circ \underline{\mu}^{(2)}\circ \underline{\mu}^{(1)}$ is a maximal green sequence of $\widehat{Q}$.
\end{proof}

\section{Proof of Theorem~\ref{main2}} \label{Sec:Prep}

In this section, we work with a fixed signed irreducible type $\mathbb{A}$ quiver $\mathcal{Q} = (Q,S,\{T_i\}_{i \in [n]})$ with $N$ vertices. We write $\underline{\mu} = \underline{\mu}_n \circ \cdots \circ \underline{\mu}_1 \circ \underline{\mu}_0$ for the associated mutation sequence of $\mathcal{Q}$. %\textcolor{red}{Comment saying we identify admissible sequences with mutation sequences (SEE NEW FOOTNOTES ON PAGE 4. WITH THESE FOOTNOTES, CAN WE REMOVE THIS SENTENCE?).}

\begin{definition} \label{def:associatedperm} For each $\underline{\mu}_i$ appearing in $\underline{\mu}$ we define a permutation $\tau_i\in \mathfrak{S}_{(\mathcal{Q})_0}\cong \mathfrak{S}_N$  where $\mathfrak{S}_{(\mathcal{Q})_0}$ denotes the symmetric group on the vertices of $\mathcal{Q}$. In the special case where $i = 0$, we define $\tau_0$ to be the identity permutation. Then for $i \in [n]$ where $\underline{\mu}_i = \mu_{i_d}\circ\cdots\circ\mu_{i_1}$ we define $\tau_i := (i_2, \ldots, i_d)$ in cycle notation (i.e. $i_j\cdot \tau_i = i_{j+1}$ for $j \in [d-1]$ and $i_d \cdot \tau_i = i_2$).  Note that $i_1 = y_i$. We also define
\begin{eqnarray}
\sigma_i & := & \tau_i\cdots\tau_1\tau_0 \nonumber \\
& = & \tau_i \cdots \tau_1 \nonumber
\end{eqnarray}
where the last equality holds since $\tau_0$ is the identity permutation.  We say that $\sigma_n$ is the \textbf{associated permutation} corresponding to $\mathcal{Q}$.
\end{definition}

Theorem~\ref{main2} will imply that the associated permutation $\sigma_n$ is exactly the permutation induced by $\underline{\mu}$ (see the last paragraph of Section~\ref{Sec:Prelim}).%We call $\tau_i$ the \textbf{associated permutation} of $\underline{\mu}_i$, and we call $\sigma_i$ the \textbf{associated permutation} of $\underline{\mu}_i\circ \cdots\circ\underline{\mu}_0$. 

Let $T_k$ and $T_t$ where $k \leq t$ be signed 3-cycles of $\mathcal{Q}$. Let $\mathcal{Q}_{k,t}$ denote the full subquiver of $\mathcal{Q}$ on the vertices of $T_1, \ldots, T_k$ and the vertices of $T_{m_1}, \ldots, T_{m_d}$ where the integers $m_1, \ldots, m_d \in [n]$ are those defined by $T_t$ as in Definition~\ref{specialT}. For example, $\mathcal{Q}_{k,k}$ is the full subquiver of $\mathcal{Q}$ on the vertices of the signed 3-cycles $T_1, \ldots, T_k$. By convention, we also define $\mathcal{Q}_{0,0}$ to be the full subquiver of $\mathcal{Q}$ consisting of only the vertex $x_1$. Now define $\text{tr}|_{{k,t}}$ to be the restriction of the transport to $\mathcal{Q}_{k,t}$.

\begin{lemma}\label{newmainlemma}
For each $k \in [n]$ there is an ice quiver $\overline{R}_k$ that is a full subquiver of $\underline{\mu}_{k-1}\circ \cdots \circ \underline{\mu}_1\circ \underline{\mu}_0(\widehat{Q})$ of the form shown in Figure~\ref{importantfullsub1} (resp. Figure~\ref{importantfullsub2}) where the vertices $z_k = x(0), x(1), \ldots, x(d-1), \text{tr}(x(d))$, and $\text{tr}(y_k)$ (resp. $z_k = x(0), x(1), \ldots, x(d-1),$ and $\text{tr}(y_k)$) are those appearing in the mutation sequence $\underline{\mu}_{A(k)}\circ \underline{\mu}_{B(k)}\circ \underline{\mu}_{C(k)}$  and the integers $m_1, m_2, \ldots, m_d$ are those defined by $T_k$ in Definition~\ref{specialT}. Recall that we only mutate at $\text{tr}(x(d))$ if $x(d) \neq x_1$. Furthermore, the ice quiver $\overline{R}_k$ has the following properties: 
\begin{itemize}\item $\overline{R}_k$ includes every frozen vertex that is connected to a mutable vertex appearing in Figure~\ref{importantfullsub1} (resp. Figure~\ref{importantfullsub2}) by at least one arrow in $\underline{\mu}_{k-1}\circ \cdots \circ \underline{\mu}_1 \circ \underline{\mu}_0(\widehat{Q})$ where $\widetilde{x}(1) := z^\prime_{m_d-1},\ \widetilde{\text{tr}}(x(d)) := x^\prime_{m_2}, \widetilde{\text{tr}}(y_k) := x^\prime_{m_1}$ and 
$\widetilde{x}(s) := x^\prime_{m_{d-s+2}}$ for\footnote{If $d = 1$, then $\widetilde{\text{tr}}(x(d)) = x^\prime_{m_1} = x_k$.} $s \in [2,d-1]$ (resp. $\widetilde{\text{tr}}(y_k) := x^\prime_{m_1}$ and $\widetilde{x}(s) := x^\prime_{m_{d-s+1}}$ for\footnote{If $d = 1$, then $\widetilde{\text{tr}}(x(d)) = x^\prime_{m_1} = x_k$. Furthermore, $d=1$, in this case, if and only if $k = 1$.} $s \in [1,d-1]$), \item vertices $y_m, y^\prime_m, z_m,$ and $z_m^\prime$ appear in $\overline{R}_k$ if and only if $\deg(y_k) =4$ in $Q$, 
\item vertices $y_\ell, y^\prime_\ell, z_\ell,$ and $z_\ell^\prime$ appear in $\overline{R}_k$ if and only if $\deg(z_k) =4$ in $Q$,
\item vertices $y_t$ and $y_t^\prime$ appear in $\overline{R}_k$ if and only if there exists a signed 3-cycle $T_t$ in $\mathcal{Q}$ with $k < t$ such that $\text{tr}|_{k,t}(y_t) = z_k$ and such that in $\underline{\mu}_{k-1}\circ \cdots \circ \underline{\mu}_1\circ \underline{\mu}_0(\widehat{Q})$ the vertex $x_t$ has been mutated exactly once\footnote{Note that this can only happen if there exists $j < k$ such that $z_j=x_t$ and $x(d,k)=y_j$ as in Definition \ref{specialT}.}, and
\item in Figure~\ref{importantfullsub1} (resp. Figure~\ref{importantfullsub2}) $C_{1,k} := x_{m_d -1}\cdot\sigma_{k-1}^{-1}$, $\widetilde{C_{1,k}} := x^\prime_{m_d -1},$ $C_{s,k} := y_{m_j}\cdot\sigma_{k-1}^{-1},$ and $\widetilde{C_{s,k}} := y_{m_j}^\prime$ for $s \in [2,d]$ and $j = d-s+2$ (resp. $C_{s,k} := y_{m_j}\cdot \sigma_{k-1}^{-1}$ and $\widetilde{C_{s,k}} := y_{m_j}^\prime$ for $s \in [1,d-1]$ and $j = d-s+2$).\end{itemize}

\noindent {Additionally, for each $k \in [n]$ we have $\underline{\mu}_k \circ \cdots \circ \underline{\mu}_1\circ \underline{\mu}_0(\widehat{\mathcal{Q}_{k,k}}) = \widecheck{\mathcal{Q}_{k,k}}\cdot \sigma_k.$}
\end{lemma}

We will prove Lemma~\ref{newmainlemma} in the case where the vertex $\text{tr}(x(d))$ appears in the mutation sequence $\underline{\mu}_k$ (i.e. when $x(d) \neq x_1$). Under this assumption, the following lemma will allow us to prove Lemma~\ref{newmainlemma} inductively. The proof of Lemma~\ref{newmainlemma} when $\text{tr}(x(d))$ does not appear in $\underline{\mu}_k$ is very similar so we omit it.

%%%%%%%
%vertices at ends of horizontal arrows were two units apart from each other in the applet with grid size 10
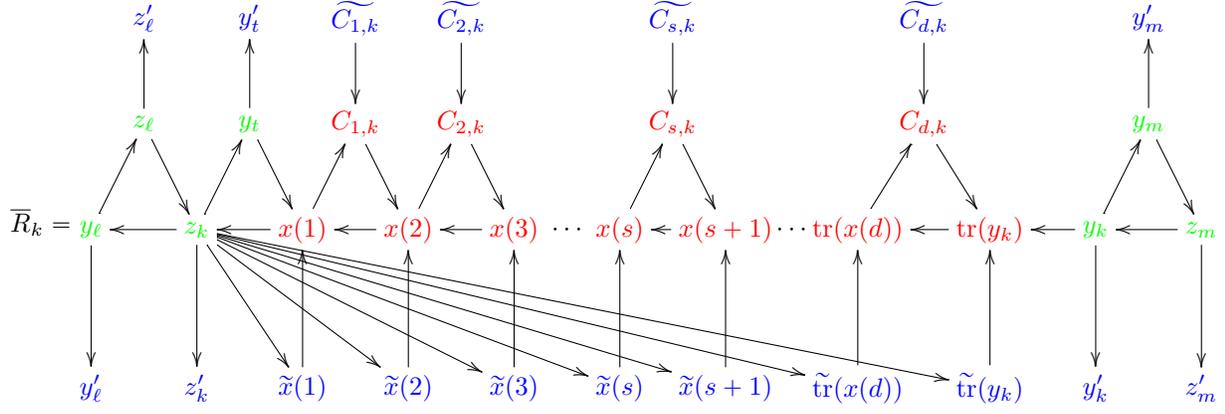
\begin{figure}
$$\raisebox{-.53in}{$\overline{R}_k =$} \begin{xy} 0;<1pt,0pt>:<0pt,-1pt>:: 
(0,40) *+{\textcolor{green}{y_{\ell}}} ="0",
(40,40) *+{\textcolor{green}{z_k}} ="1",
(20,0) *+{\textcolor{green}{z_{\ell}}} ="2",
(60,0) *+{\textcolor{green}{y_t}} ="3",
(80,40) *+{\textcolor{red}{x(1)}} ="4",
(100,0) *+{\textcolor{red}{C_{1,k}}} ="5",
(120,40) *+{\textcolor{red}{x(2)}} ="6",
(140,0) *+{\textcolor{red}{C_{2,k}}} ="7",
(160,40) *+{\textcolor{red}{x(3)}} ="8",
(200,40) *+{\textcolor{red}{x(s)}} ="9",
(220,0) *+{\textcolor{red}{C_{s,k}}} ="10",
(240,40) *+{\textcolor{red}{x(s+1)}} ="11",
(180,40) *+{\cdots} ="12",
(265,40) *+{\cdots} ="13",
(290,40) *+{\textcolor{red}{\text{tr}(x(d))}} ="14",
(315,0) *+{\textcolor{red}{C_{d,k}}} ="15",
(340,40) *+{\textcolor{red}{\text{tr}(y_k)}} ="16",
(380,40) *+{\textcolor{green}{y_k}} ="17",
(400,0) *+{\textcolor{green}{y_m}} ="18",
(420,40) *+{\textcolor{green}{z_m}} ="19",
(40,100) *+{\textcolor{blue}{z_k^\prime}} ="20",
(80,100) *+{\textcolor{blue}{\widetilde{x}(1)}} ="21",
(120,100) *+{\textcolor{blue}{\widetilde{x}(2)}} ="22",
(160,100) *+{\textcolor{blue}{\widetilde{x}(3)}} ="23",
(200,100) *+{\textcolor{blue}{\widetilde{x}(s)}} ="24",
(240,100) *+{\textcolor{blue}{\widetilde{x}(s+1)}} ="25",
(290,100) *+{\textcolor{blue}{\widetilde{\text{tr}}(x(d))}} ="26",
(340,100) *+{\textcolor{blue}{\widetilde{\text{tr}}(y_k)}} ="27",
(380,100) *+{\textcolor{blue}{y_k^\prime}} ="28",
(420,100) *+{\textcolor{blue}{z_m^\prime}} ="29",
(0,100) *+{\textcolor{blue}{y_\ell^\prime}} ="30",
(20,-40) *+{\textcolor{blue}{z_\ell^\prime}} ="31",
(400,-40) *+{\textcolor{blue}{y_m^\prime}} ="32",
(100,-40) *+{\textcolor{blue}{\widetilde{C_{1,k}}}} = "33",
(140,-40) *+{\textcolor{blue}{\widetilde{C_{2,k}}}} = "34",
(220,-40) *+{\textcolor{blue}{\widetilde{C_{s,k}}}} = "35",
(60,-40) *+{\textcolor{blue}{y_{t}^\prime}} = "36",
(315,-40) *+{\textcolor{blue}{\widetilde{C_{d,k}}}} = "37",
"1", {\ar"0"},
"0", {\ar"2"},
"0", {\ar"30"},
"2", {\ar"1"},
"1", {\ar"3"},
"4", {\ar"1"},
"1", {\ar"20"},
"1", {\ar"21"},
"1", {\ar"22"},
"1", {\ar"23"},
"1", {\ar"24"},
"1", {\ar"25"},
"1", {\ar"26"},
"1", {\ar"27"},
%"1", {\ar"28"},
%"1", {\ar"29"},
"2", {\ar"31"},
"3", {\ar"4"},
"4", {\ar"5"},
"6", {\ar"4"},
"21", {\ar"4"},
"5", {\ar"6"},
"6", {\ar"7"},
"8", {\ar"6"},
"22", {\ar"6"},
"7", {\ar"8"},
"23", {\ar"8"},
"9", {\ar"10"},
"11", {\ar"9"},
"24", {\ar"9"},
"10", {\ar"11"},
"25", {\ar"11"},
"14", {\ar"15"},
"16", {\ar"14"},
"26", {\ar"14"},
"15", {\ar"16"},
"17", {\ar"16"},
"27", {\ar"16"},
"17", {\ar"18"},
"19", {\ar"17"},
"17", {\ar"28"},
"18", {\ar"19"},
"18", {\ar"32"},
"19", {\ar"29"},
"33", {\ar"5"},
"34", {\ar"7"},
"35", {\ar"10"},
"3", {\ar"36"},
"37", {\ar"15"},
\end{xy}$$
\caption{The local configuration around $y_k$ and $z_k$ just before $\underline{\mu}_k$ is applied.}
\label{importantfullsub1}
\end{figure}

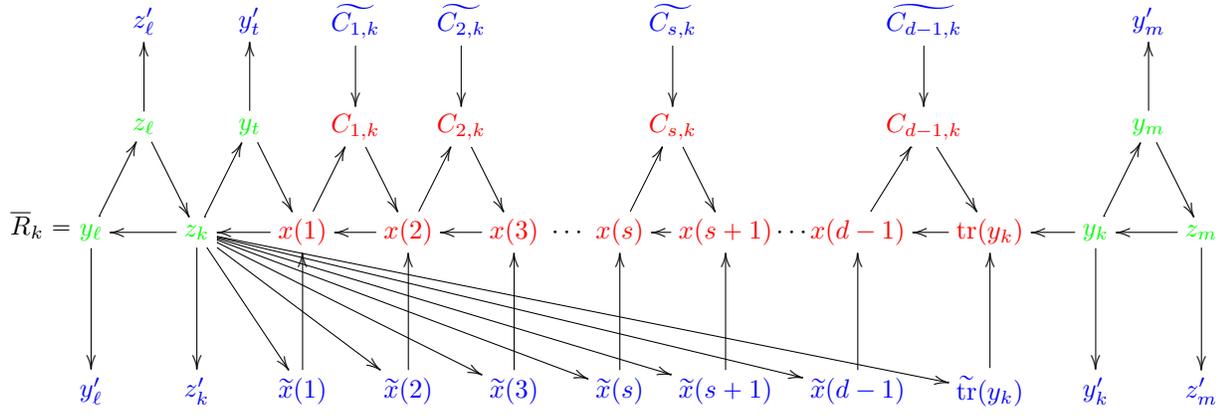
\begin{figure}
$$\raisebox{-.53in}{$\overline{R}_k =$} \begin{xy} 0;<1pt,0pt>:<0pt,-1pt>:: 
(0,40) *+{\textcolor{green}{y_{\ell}}} ="0",
(40,40) *+{\textcolor{green}{z_k}} ="1",
(20,0) *+{\textcolor{green}{z_{\ell}}} ="2",
(60,0) *+{\textcolor{green}{y_t}} ="3",
(80,40) *+{\textcolor{red}{x(1)}} ="4",
(100,0) *+{\textcolor{red}{C_{1,k}}} ="5",
(120,40) *+{\textcolor{red}{x(2)}} ="6",
(140,0) *+{\textcolor{red}{C_{2,k}}} ="7",
(160,40) *+{\textcolor{red}{x(3)}} ="8",
(200,40) *+{\textcolor{red}{x(s)}} ="9",
(220,0) *+{\textcolor{red}{C_{s,k}}} ="10",
(240,40) *+{\textcolor{red}{x(s+1)}} ="11",
(180,40) *+{\cdots} ="12",
(265,40) *+{\cdots} ="13",
(290,40) *+{\textcolor{red}{x(d-1)}} ="14",
(315,0) *+{\textcolor{red}{C_{d-1,k}}} ="15",
(340,40) *+{\textcolor{red}{\text{tr}(y_k)}} ="16",
(380,40) *+{\textcolor{green}{y_k}} ="17",
(400,0) *+{\textcolor{green}{y_m}} ="18",
(420,40) *+{\textcolor{green}{z_m}} ="19",
(40,100) *+{\textcolor{blue}{z_k^\prime}} ="20",
(80,100) *+{\textcolor{blue}{\widetilde{x}(1)}} ="21",
(120,100) *+{\textcolor{blue}{\widetilde{x}(2)}} ="22",
(160,100) *+{\textcolor{blue}{\widetilde{x}(3)}} ="23",
(200,100) *+{\textcolor{blue}{\widetilde{x}(s)}} ="24",
(240,100) *+{\textcolor{blue}{\widetilde{x}(s+1)}} ="25",
(290,100) *+{\textcolor{blue}{\widetilde{x}(d-1)}} ="26",
(340,100) *+{\textcolor{blue}{\widetilde{\text{tr}}(y_k)}} ="27",
(380,100) *+{\textcolor{blue}{y_k^\prime}} ="28",
(420,100) *+{\textcolor{blue}{z_m^\prime}} ="29",
(0,100) *+{\textcolor{blue}{y_\ell^\prime}} ="30",
(20,-40) *+{\textcolor{blue}{z_\ell^\prime}} ="31",
(400,-40) *+{\textcolor{blue}{y_m^\prime}} ="32",
(100,-40) *+{\textcolor{blue}{\widetilde{C_{1,k}}}} = "33",
(140,-40) *+{\textcolor{blue}{\widetilde{C_{2,k}}}} = "34",
(220,-40) *+{\textcolor{blue}{\widetilde{C_{s,k}}}} = "35",
(60,-40) *+{\textcolor{blue}{y_{t}^\prime}} = "36",
(315,-40) *+{\textcolor{blue}{\widetilde{C_{d-1,k}}}} = "37",
"1", {\ar"0"},
"0", {\ar"2"},
"0", {\ar"30"},
"2", {\ar"1"},
"1", {\ar"3"},
"4", {\ar"1"},
"1", {\ar"20"},
"1", {\ar"21"},
"1", {\ar"22"},
"1", {\ar"23"},
"1", {\ar"24"},
"1", {\ar"25"},
"1", {\ar"26"},
"1", {\ar"27"},
%"1", {\ar"28"},
%"1", {\ar"29"},
"2", {\ar"31"},
"3", {\ar"4"},
"4", {\ar"5"},
"6", {\ar"4"},
"21", {\ar"4"},
"5", {\ar"6"},
"6", {\ar"7"},
"8", {\ar"6"},
"22", {\ar"6"},
"7", {\ar"8"},
"23", {\ar"8"},
"9", {\ar"10"},
"11", {\ar"9"},
"24", {\ar"9"},
"10", {\ar"11"},
"25", {\ar"11"},
"14", {\ar"15"},
"16", {\ar"14"},
"26", {\ar"14"},
"15", {\ar"16"},
"17", {\ar"16"},
"27", {\ar"16"},
"17", {\ar"18"},
"19", {\ar"17"},
"17", {\ar"28"},
"18", {\ar"19"},
"18", {\ar"32"},
"19", {\ar"29"},
"33", {\ar"5"},
"34", {\ar"7"},
"35", {\ar"10"},
"3", {\ar"36"},
"37", {\ar"15"},
\end{xy}$$
\caption{The local configuration in the special case when $x(d)=x_1$ since $\text{tr}(x_1)$ is not defined.}
\label{importantfullsub2}
\end{figure}

\begin{lemma}\label{Lemma:mukR}
Let $k \in [n]$ be given and let $\overline{R}_k$ be the ice quiver described in Lemma~\ref{newmainlemma}.  (See Figure \ref{importantfullsub1}.) Then \begin{itemize} \item$\underline{\mu}_k(\overline{R}_k)$ has the form shown in Figure~\ref{mukR-reorganized}  (here, the vertices $y_m, y_m^\prime, z_m, z_m^\prime, y_\ell, y_\ell^\prime, z_\ell, z_\ell^\prime, y_t,$ and $y_t^\prime$ appear in $\underline{\mu}_k(\overline{R}_k)$ if and only if they appear in $\overline{R}_k$), %then the corresponding vertex does not appear in $\underline{\mu}_k({R})$,  
\item $\underline{\mu}_k(\overline{R}_k)$ is a full subquiver of $\underline{\mu}_k\circ \cdots \circ \underline{\mu}_1 \circ \underline{\mu}_0(\widehat{Q}),$
\item as one mutates $\overline{R}_k$ along $\underline{\mu}_k$, one does so only at green vertices,
\item $\underline{\mu}_k(\overline{R}_k)$ includes every frozen vertex that is connected to a mutable vertex appearing in Figure~\ref{mukR-reorganized} by at least one arrow in $\underline{\mu}_k\circ \cdots \circ \underline{\mu}_1 \circ \underline{\mu}_0(\widehat{Q})$,
\item the full subquiver of $\underline{\mu}_{k-1}\circ \cdots \circ \underline{\mu}_1\circ \underline{\mu}_0(\widehat{Q})$ on the vertices $(\widehat{Q})_0\backslash(\overline{R}_k)_0$ is unchanged by the mutation sequence $\underline{\mu}_k$.
\item the vertices $z_\ell$ and $z_m$ (rather than $z_k$) are the only mutable vertices in $\underline\mu_k(\overline{R}_k)$ that are incident to multiple frozen vertices.  %\textcolor{blue}{Furthermore, the frozen vertices appearing between $z_\ell'$ and $y_\ell'$ have been cycled around. [REWORD??]}
\end{itemize}

{Additionally, for each $k \in [n]$, the full subquiver of $\underline{\mu}_k \circ \cdots \circ \underline{\mu}_1\circ \underline{\mu}_0(\widehat{Q})$ restricted to the green mutable vertices outside of $(\overline{R}_k)_0$, as well as the incident frozen vertices, equals the original framed quiver $\widehat{Q}$ restricted to those vertices.}

\end{lemma}

\begin{figure}
$$\raisebox{-.53in}{$\underline{\mu}_k(\overline{R}_k) =$} \begin{xy} 0;<1pt,0pt>:<0pt,-1pt>:: 
(360,0) *+{\textcolor{green}{y_{\ell}}} ="0",
(40,40) *+{\textcolor{red}{z_k}} ="1",
(0,40) *+{\textcolor{green}{z_{\ell}}} ="2",
(60,0) *+{\textcolor{green}{y_t}} ="3",
(80,40) *+{\textcolor{red}{x(1)}} ="4",
(100,0) *+{\textcolor{red}{C_{1,k}}} ="5",
(120,40) *+{\textcolor{red}{x(2)}} ="6",
(140,0) *+{\textcolor{red}{C_{2,k}}} ="7",
(160,40) *+{\textcolor{red}{x(s-1)}} ="8",
(200,40) *+{\textcolor{red}{x(s)}} ="9",
(220,0) *+{\textcolor{red}{C_{s,k}}} ="10",
(240,40) *+{\textcolor{red}{x(d-1)}} ="11",
(160,20) *+{\cdots} ="12",
(240,20) *+{\cdots} ="13",
(290,40) *+{\textcolor{red}{\text{tr}(x(d))}} ="14",
(315,0) *+{\textcolor{red}{C_{d,k}}} ="15",
(340,70) *+{\textcolor{red}{\text{tr}(y_k)}} ="16",
(380,40) *+{\textcolor{red}{y_k}} ="17",
(400,0) *+{\textcolor{green}{y_m}} ="18",
(420,40) *+{\textcolor{green}{z_m}} ="19",
(40,100) *+{\textcolor{blue}{z_k^\prime}} ="20",
(80,100) *+{\textcolor{blue}{\widetilde{x}(1)}} ="21",
(120,100) *+{\textcolor{blue}{\widetilde{x}(2)}} ="22",
(160,100) *+{\textcolor{blue}{\widetilde{x}(3)}} ="23",
(200,100) *+{\textcolor{blue}{\widetilde{x}(s)}} ="24",
(240,100) *+{\textcolor{blue}{\widetilde{x}(s+1)}} ="25",
(290,100) *+{\textcolor{blue}{\widetilde{\text{tr}}(x(d))}} ="26",
(340,100) *+{\textcolor{blue}{\widetilde{\text{tr}}(y_k)}} ="27",
(380,100) *+{\textcolor{blue}{y_k^\prime}} ="28",
(420,100) *+{\textcolor{blue}{z_m^\prime}} ="29",
(0,100) *+{\textcolor{blue}{y_\ell^\prime}} ="30",
(0,-40) *+{\textcolor{blue}{z_\ell^\prime}} ="31",
(400,-40) *+{\textcolor{blue}{y_m^\prime}} ="32",
(100,-40) *+{\textcolor{blue}{\widetilde{C_{1,k}}}} = "33",
(140,-40) *+{\textcolor{blue}{\widetilde{C_{2,k}}}} = "34",
(220,-40) *+{\textcolor{blue}{\widetilde{C_{s,k}}}} = "35",
(60,-40) *+{\textcolor{blue}{y_{t}^\prime}} = "36",
(315,-40) *+{\textcolor{blue}{\widetilde{C_{d,k}}}} = "37",
%"1", {\ar"0"},
%"0", {\ar"2"},
"0", {\ar"30"},
"0", {\ar"16"},
"1", {\ar"2"},
"1", {\ar"5"},
"3", {\ar"1"},
"4", {\ar"1"},
%"1", {\ar"20"},
"2", {\ar"21"},
"2", {\ar"22"},
"2", {\ar"23"},
"2", {\ar"24"},
"2", {\ar"25"},
"2", {\ar"26"},
"2", {\ar"27"},
%"1", {\ar"28"},
%"1", {\ar"29"},
"2", {\ar"3"},
"2", {\ar"20"},
"2", {\ar"31"},
%"3", {\ar"4"},
"4", {\ar"7"},
"5", {\ar"4"},
"6", {\ar"4"},
"21", {\ar"1"},
%"21", {\ar"4"},
%"5", {\ar"6"},
"7", {\ar"6"},
"8", {\ar"10"},
"14", {\ar"11"},
"14", {\ar"17"},
"16", {\ar"19"},
"19", {\ar"0"},
"20", {\ar"16"},
"22", {\ar"4"},
%"7", {\ar"8"},
"23", {\ar"6"},
"10", {\ar"9"},
"9", {\ar"8"},
%"11", {\ar"9"},
"11", {\ar"15"},
"24", {\ar"8"},
%"10", {\ar"11"},
"25", {\ar"9"},
"15", {\ar"14"},
"16", {\ar"14"},
"26", {\ar"11"},
%"15", {\ar"16"},
"17", {\ar"16"},
"27", {\ar"14"},
"18", {\ar"17"},
"19", {\ar"20"},
"28", {\ar"17"},
%"18", {\ar"19"},
"18", {\ar"32"},
"19", {\ar"28"},
"19", {\ar"29"},
"33", {\ar"5"},
"34", {\ar"7"},
"35", {\ar"10"},
"3", {\ar"36"},
"37", {\ar"15"},
\end{xy}$$
\caption{The quiver $\underline{\mu}_k(\overline{R}_k)$ before rearrangement.}
\label{mukR}
\end{figure}
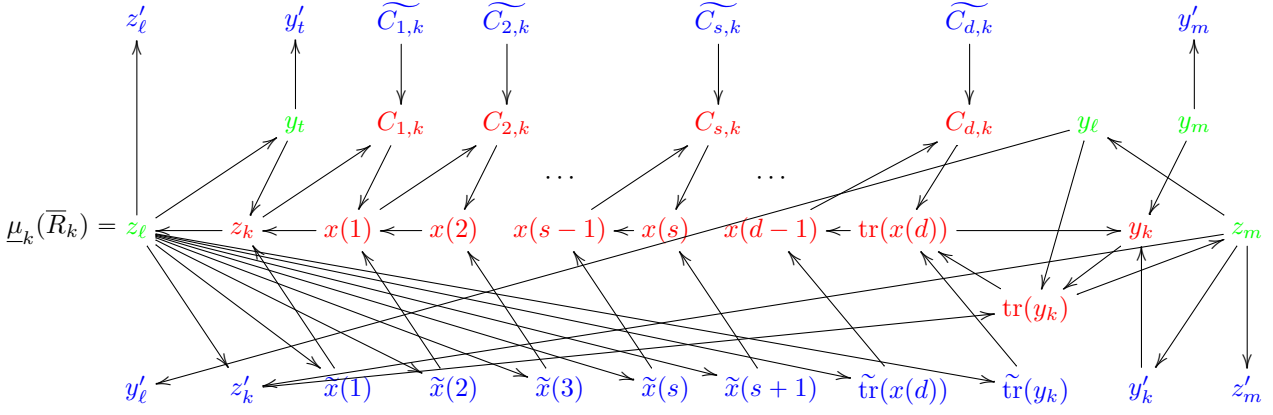

\begin{figure}
$$\raisebox{-.53in}{$\underline{\mu}_k(\overline{R}_k) =$} \begin{xy} 0;<1pt,0pt>:<0pt,-1pt>:: 
(400,40) *+{\textcolor{green}{y_{\ell}}} ="0",
(40,40) *+{\textcolor{red}{z_k}} ="1",
(0,40) *+{\textcolor{green}{z_{\ell}}} ="2",
(20,0) *+{\textcolor{green}{y_t}} ="3",
(80,40) *+{\textcolor{red}{x(1,k)}} ="4",
(60,0) *+{\textcolor{red}{C_{1,k}}} ="5",
%(120,40) *+{\textcolor{red}{x(2)}} ="6",
%(100,0) *+{\textcolor{red}{C_2}} ="7",
(140,40) *+{\textcolor{red}{x(s-1,k)}} ="8",
(190,40) *+{\textcolor{red}{x(s,k)}} ="9",
(165,0) *+{\textcolor{red}{C_{s,k}}} ="10",
(250,40) *+{\textcolor{red}{x(d-1,k)}} ="11",
(105,40) *+{\cdots} ="12",
(215,40) *+{\cdots} ="13",
(310,40) *+{\textcolor{red}{\text{tr}(x(d,k))}} ="14",
(280,0) *+{\textcolor{red}{C_{d,k}}} ="15",
(360,40) *+{\textcolor{red}{\text{tr}(y_k)}} ="16",
(335,0) *+{\textcolor{red}{y_k}} ="17",
(360,0) *+{\textcolor{green}{y_m}} ="18",
(380,0) *+{\textcolor{green}{z_m}} ="19",
(360,100) *+{\textcolor{blue}{z_k^\prime}} ="20",
(40,100) *+{\textcolor{blue}{\widetilde{x}(1,k)}} ="21",
(80,100) *+{\textcolor{blue}{\widetilde{x}(2,k)}} ="22",
(140,100) *+{\textcolor{blue}{\widetilde{x}(s,k)}} ="24",
(190,100) *+{\textcolor{blue}{\widetilde{x}(s+1,k)}} ="25",
(250,100) *+{\textcolor{blue}{\widetilde{\text{tr}}(x(d-1,k))}} ="26",
(310,100) *+{\textcolor{blue}{\widetilde{\text{tr}}(y_k)}} ="27",
(335,-40) *+{\textcolor{blue}{y_k^\prime}} ="28",
(380,-40) *+{\textcolor{blue}{z_m^\prime}} ="29",
(400,100) *+{\textcolor{blue}{y_\ell^\prime}} ="30",
(0,100) *+{\textcolor{blue}{z_\ell^\prime}} ="31",
(360,-40) *+{\textcolor{blue}{y_m^\prime}} ="32",
(60,-40) *+{\textcolor{blue}{\widetilde{C_{1,k}}}} = "33",
(165,-40) *+{\textcolor{blue}{\widetilde{C_{s,k}}}} = "35",
(20,-40) *+{\textcolor{blue}{y_{t}^\prime}} = "36",
(280,-40) *+{\textcolor{blue}{\widetilde{C_{d,k}}}} = "37",
"0", {\ar"30"},
"0", {\ar"16"},
"1", {\ar"2"},
"1", {\ar"5"},
"3", {\ar"1"},
"4", {\ar"1"},
"2", {\ar"21"},
"2", {\ar"22"},
%"2", {\ar"23"},
"2", {\ar"24"},
"2", {\ar"25"},
"2", {\ar"26"},
"2", {\ar"27"},
"2", {\ar"3"},
"2", {\ar"20"},
"2", {\ar"31"},
%"4", {\ar"7"},
"5", {\ar"4"},
%"6", {\ar"4"},
"21", {\ar"1"},
%"7", {\ar"6"},
"8", {\ar"10"},
"14", {\ar"11"},
"14", {\ar"17"},
"16", {\ar"19"},
"19", {\ar"0"},
"20", {\ar"16"},
"22", {\ar"4"},
%"23", {\ar"6"},
"10", {\ar"9"},
"9", {\ar"8"},
"11", {\ar"15"},
"24", {\ar"8"},
"25", {\ar"9"},
"15", {\ar"14"},
"16", {\ar"14"},
"26", {\ar"11"},
"17", {\ar"16"},
"27", {\ar"14"},
"18", {\ar"17"},
"19", {\ar"20"},
"28", {\ar"17"},
"18", {\ar"32"},
"19", {\ar"28"},
"19", {\ar"29"},
"33", {\ar"5"},
%"34", {\ar"7"},
"35", {\ar"10"},
"3", {\ar"36"},
"37", {\ar"15"},
\end{xy}$$
\caption{The quiver $\underline{\mu}_k(\overline{R}_k)$ rearranged to look more like $\overline{R}_{k+1}$.}
\label{mukR-reorganized}
\end{figure}
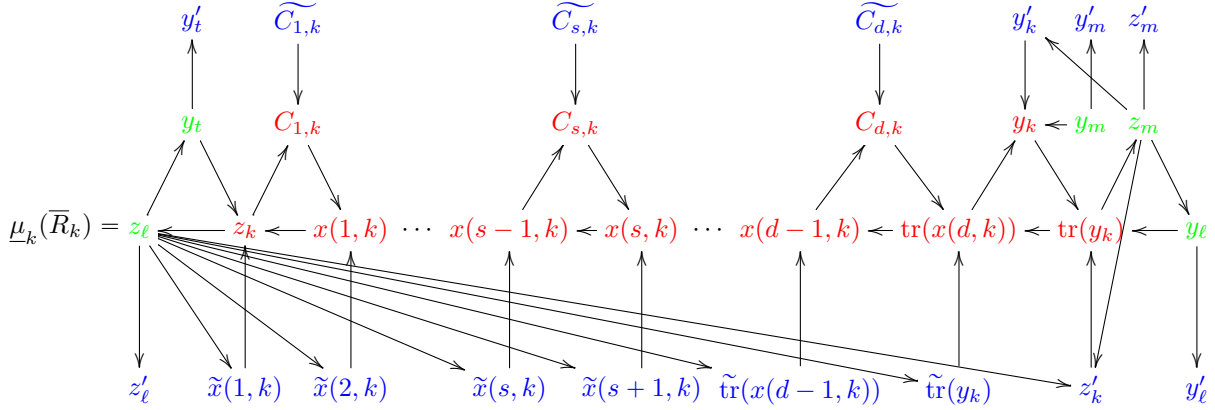

\begin{proof}[Proof of Theorem~\ref{main2}]
By the third assertion in Lemma~\ref{Lemma:mukR}, the associated mutation sequence $\underline{\mu} = \underline{\mu}_n\circ \cdots \underline{\mu}_1\circ \underline{\mu}_0$ of $\mathcal{Q}$ is a green {mutation} sequence of $Q$. By Lemma~\ref{newmainlemma}, $$\underline{\mu}_n \circ \cdots \circ \underline{\mu}_1\circ \underline{\mu}_0(\widehat{Q}) = \underline{\mu}_n \circ \cdots \circ \underline{\mu}_1\circ \underline{\mu}_0(\widehat{\mathcal{Q}}_{n,n}) = \widecheck{\mathcal{Q}}_{n,n}\cdot \sigma_n = \widecheck{Q}\cdot\sigma_n$$ and so every mutable vertex of $\underline{\mu}_n \circ \cdots \circ \underline{\mu}_1\circ \underline{\mu}_0(\widehat{{Q}})$ is red. Thus $\underline{\mu} = \underline{\mu}_n\circ \cdots \underline{\mu}_1\circ \underline{\mu}_0 \in \text{green}(Q).$
\end{proof}

\begin{remark}\label{Lemma:Only4}
It follows from Lemma~\ref{BV} that as one mutates $\overline{R}_k$ along $\underline{\mu}_k = \mu_{i_r}\circ \cdots \circ \mu_{i_1}$, we have that $i_j$ in $\mu_{i_{j-1}}\circ \cdots \circ \mu_{i_1}(\overline{R}_k)$ is incident to at most four other mutable vertices. \end{remark}

\begin{proof}[Proof of Lemma~\ref{Lemma:mukR}]
The first assertion follows inductively by mutating the vertices of $\overline{R}_k$ in the specified order $\underline{\mu}_k = \mu_{\text{tr}(y_k)} \circ \mu_{\text{tr}(x(d))}\circ \mu_{x(d-1)}\circ \mu_{x(d-2)}\circ \cdots \mu_{x(1)}\circ\mu_{x(0)}\circ \mu_{y_k}$, reading right-to-left.  In particular, as this mutation sequence is applied to $\overline{R}_k$, Remark~\ref{Lemma:Only4} shows that the mutable vertices incident to $y_\ell$ are located further and further to the right in Figure 19 until we see that they are $\text{tr}(y_k)$ and $z_m$ at the end of the sequence. In fact, we observe after mutating $\overline{R}_k$ at $y_k$ that $z_k$ is the unique green vertex of $\overline{R}_k$ (with the exception of the vertices $y_\ell$, $z_\ell$, $y_t$, $y_m$ and $z_m$, if they appear in $\overline{R}_k$). As we continue to mutate $\mu_{y_k}(\overline{R}_k)$ along the remaining mutations in $\underline{\mu}_k$, the unique green vertex is $x(s)$ for some $s \in [0,d-1]$ or as $\text{tr}(x(d))$ or $\text{tr}(y_k)$ (as before, with the exception of the vertices $y_\ell$, $z_\ell$, $y_t$, $y_m$ and $z_m$).  Iteratively mutating at this unique green vertex exactly corresponds to performing the mutation sequence $\underline{\mu}_{A(k)}\circ\underline{\mu}_{B(k)}\circ\underline{\mu}_{C(k)}$ on $\underline{\mu}_{D(k)}(\overline{R}_k)$.

The second assertion, $\underline{\mu}_k(\overline{R}_k)$ is a full subquiver of $\underline{\mu}_k\circ \cdots \circ \underline{\mu}_1 \circ \underline{\mu}_0(\widehat{Q})$, follows since the vertices of $Q$ at which one mutates when applying $\underline{\mu}_k$ are all vertices of $\overline{R}_k$. One can see that the third assertion follows from the above observation that a unique vertex becomes green as we iteratively mutate. The fourth assertion holds for $\underline{\mu}_k(\overline{R}_k)$ since it holds for $\overline{R}_k$. The fifth assertion follows since the vertices in the support of $\underline{\mu}_k$ are all disconnected from the vertices in $(\underline{\mu}_{k-1}\circ \cdots \circ \underline{\mu}_1 \circ \underline{\mu}_0(\widehat{Q}))_0\setminus (\overline{R}_k)_0$. Further, the sixth assertion is demonstrated inductively as we mutate $x(s)$ for $s \in [0,d-1]$.  Lastly, by restricting to the green mutable vertices outside of $(\overline{R}_k)_0$ and the incident frozen vertices, it is clear that the mutation sequence $\underline{\mu}_k$ leaves this full subquiver unaffected.\end{proof}

\begin{proof}[Proof of Lemma~\ref{newmainlemma}]
\begin{figure}
$$\raisebox{-.53in}{${\overline{R}} =$} \begin{xy} 0;<1pt,0pt>:<0pt,-1pt>:: 
(0,40) *+{\textcolor{green}{y_{2}}} ="0",
(40,40) *+{\textcolor{green}{z_1}} ="1",
(20,0) *+{\textcolor{green}{z_{2}}} ="2",
(80,40) *+{\textcolor{red}{x_1}} ="4",
(120,40) *+{\textcolor{green}{y_1}} ="6",
(40,100) *+{\textcolor{blue}{z_1^\prime}} ="20",
(80,100) *+{\textcolor{blue}{x_1^\prime}} ="21",
(120,100) *+{\textcolor{blue}{y_{1}^\prime}} ="22",
(0,100) *+{\textcolor{blue}{y_2^\prime}} ="30",
(20,-40) *+{\textcolor{blue}{z_2^\prime}} ="31",
"1", {\ar"0"},
"0", {\ar"2"},
"0", {\ar"30"},
"2", {\ar"1"},
"4", {\ar"1"},
"1", {\ar"20"},
"1", {\ar"21"},
"2", {\ar"31"},
"6", {\ar"4"},
"21", {\ar"4"},
"6", {\ar"22"},
\end{xy}$$
\caption{The subquiver $\overline{R} = \overline{R}_1$ of $\underline{\mu}_0(\widehat{Q}).$}
\label{mu0sub}
\end{figure}

We prove the lemma by induction. For $k=1$, observe that $\underline{\mu}_0(\widehat{Q})$ has the full subquiver $\overline{R}$ shown in Figure~\ref{mu0sub} where we assume that $n > 1$. We show that $\overline{R}$ has all of the properties that $\overline{R}_1$ must satisfy. Note that $\text{tr}(y_1) = x_1$ and for $k = 1$ one has that $x(1)= x_{m_1} = x_1$.  Since $\text{deg}(y_1) = 2$, no vertices $y_m, y_m^\prime, z_m,$ and $z_m^\prime$ appear in $\overline{R}$, as desired. Since only vertex $x_1$ has been mutated to obtain $\underline{\mu}_0(\widehat{Q})$, no arrows between vertices of a signed 3-cycle $T_t$ with $1 < t$ and vertices of signed 3-cycle $T_i$ with $i \le 1$ have been created. Furthermore, there is no signed 3-cycle $T_t$ of $\mathcal{Q}$ with $1 < t$ where $\text{tr}|_{1,t}(y_t) = z_1$. Note that in this degenerate case, $\text{tr}(y_1)=x(1)$ and so no $C_{i,1}$'s or $\widetilde{C_{i,1}}$'s appear in $\overline{R}_1$, and $\widetilde{x}(1)=\widetilde{\text{tr}}(y_1)=x_1'$. 
Thus the quiver $\overline{R}$ satisfies all of the properties that $\overline{R}_1$ must satisfy.  % so we put $\overline{R}_1 = R$.
Further, in this special case $\widehat{\mathcal{Q}_{0,0}}$ contains only the vertex $x_1$ and $x_1^\prime$ and $\sigma_0$ is the identity permutation.  Thus $\underline{\mu}_0\widehat{Q_{0,0}}$ indeed equals 
$\widecheck{Q_{0,0}}\cdot \sigma_0$.

Now assume that $k>1$ and that $\underline{\mu}_{k-1}\circ \cdots \circ \underline{\mu}_1\circ \underline{\mu}_0(\widehat{Q})$ has a full subquiver $\overline{R}_k$ with the properties in the statement of the lemma. To show that $\underline{\mu}_{k}\circ \cdots \circ \underline{\mu}_1\circ \underline{\mu}_0(\widehat{Q})$ has the desired full subquiver $\overline{R}_{k+1}$, we consider four cases: \begin{itemize}\item[i)]$\text{deg}(y_k) = 2$ and $\text{deg}(z_k) = 4,$
\item[ii)] $\text{deg}(y_k) = 4$ and $\text{deg}(z_k) = 2,$
\item[iii)] $\text{deg}(y_k) = 4$ and $\text{deg}(z_k) = 4,$ and
\item[iv)] $\text{deg}(y_k) = 2$ and $\text{deg}(z_k) = 2.$

\end{itemize}

Suppose that we are in Case i). By the properties of the ice quiver $\overline{R}_k$, this means that vertices $y_m, y_m^\prime, z_m,$ and $z_m^\prime$ do not appear in $\overline{R}_k$. This also implies that $\ell = k+1$. Now Lemma~\ref{Lemma:mukR} implies that $\underline{\mu}_k(\overline{R}_k)$ has the form shown in Figure~\ref{mukRcaseI} where the vertices $y_t$ and $y_t^\prime$ appear $\underline{\mu}_k(\overline{R}_k)$ if and only if they appear in $\overline{R}_k$. Note that the quiver in Figure~\ref{mukRcaseI} is the same as the quiver in Figure \ref{mukR-reorganized} with the notation updated accordingly.
In particular, the integers $m_1^{(k+1)}, m_2^{(k+1)}, \ldots, m_{d+1}^{(k+1)} \in [n]$ and the vertices $x(1,k+1), \ldots, x(d+1, k+1) \in (Q)_0$ are those defined by $T_{k+1}$ following Definition \ref{specialT}. 
Since the signed 3-cycles $T_k$ and $T_{k+1}$ share the vertex $z_k$ (i.e. $z_k=x_{k+1}$), we have that \begin{equation} \label{label-update} m_1^{(k+1)} = k+1,\ m_2^{(k+1)} = m_1,\ \ldots,\ m_j^{(k+1)} = m_{j-1},\ \ldots,\ m_{d+1}^{(k+1)} = m_d\end{equation} and $$x(1,k+1) = z_k,\ x(2,k+1) = x(1),\ \ldots,\ x(s,k+1) = x(s-1),\ \ldots, x(d+1, k+1) = x(d).$$ This implies that $\text{tr}(x(d+1,k+1)) = \text{tr}(x(d,k))$ and $\text{tr}(y_{k+1}) = \text{tr}(y_k).$ Now we also obtain that $$\widetilde{x}(1,k+1) = z^\prime_{m_{d+1}^{(k+1)}-1} = z^\prime_{m_d-1} = \widetilde{x}(1,k) \ \text{and} \ \widetilde{x}(s,k+1) = x^\prime_{m^{(k+1)}_{d+1-s+2}}= x^\prime_{m_j^{(k+1)}} = x^\prime_{m_{j-1}} = \widetilde{x}(s,k)$$ for $s \in [2,d]$ where $j = (d+1)-s+2$ and that $$\widetilde{\text{tr}}(x(d+1,k+1)) = x^\prime_{m_2^{(k+1)}} = x_{m_1}^\prime = \widetilde{\text{tr}}(y_k) \ \text{and} \ \widetilde{\text{tr}}(y_{k+1}) = x^\prime_{m_1^{(k+1)}} = x^\prime_{k+1} = z_k^\prime$$ where the last equality follows from the fact that $T_k$ and $T_{k+1}$ share the vertex $z_k$. Thus we have labeled the vertices of $\underline{\mu}_k(\overline{R}_k)$ accordingly in Figure~\ref{mukRcaseI}. Furthermore, that the signed 3-cycles $T_k$ and $T_{k+1}$ share the vertex $z_k$ implies that $z_{k+1} = \text{tr}|_{k+1,t}(y_t)$ if and only if $z_k = \text{tr}|_{k,t}(y_t)$.

Next, observe that for any $s \in [d]$ we have $C_{s,k}  \cdot \tau_{k}^{-1} = C_{s,k}$ since we do not mutate $C_{s,k}$ when applying $\underline{\mu}_k$. Additionally 
$x_{m^{(k+1)}_{d+1}-1} = x_{m_d-1}$ and $y_{m^{(k+1)}_{j+1}} = y_{m_j}$ (for $j = (d+1)-s+2$ where $s \in [2,d]$) follows from (\ref{label-update}).  Comparing with the fifth bullet point of Lemma \ref{newmainlemma}, we obtain 
$C_{s,k} = C_{s,k}\cdot \tau_k^{-1} = C_{s,k+1}$
and $\widetilde{C_{s,k}} = \widetilde{C_{s,k}}\cdot \tau_k^{-1} = \widetilde{C_{s,k+1}}$ for any $s \in [d]$.

Now let $C_{d+1,k+1} = y_k$ and $\widetilde{C_{d+1,k+1}}= y_k^\prime$. Note that $y_k \cdot \sigma_{k-1}^{-1} = y_k$ since $y_k$ has not been mutated in $\underline{\mu}_{k-1}\circ \cdots \circ \underline{\mu}_0(\widehat{Q})$. Furthermore, $y_k \cdot \sigma_{k-1}^{-1}\tau_k^{-1} = y_k \cdot \tau_{k}^{-1} = y_k$ by the definition of $\tau_k$ so $C_{d+1,k+1} = y_k\cdot \sigma_{k}^{-1},$ as desired.

We now construct an ice quiver $\overline{R}$ that is the full subquiver of $\underline{\mu}_k\circ \cdots \circ \underline{\mu}_1\circ\underline{\mu}_0(\widehat{Q})$ on the vertices of $\underline{\mu}_k(\overline{R}_k)$, as well as the vertices $x_r, y_r, z _r$ and corresponding frozen vertices $x^\prime_r, y^\prime_r, z^\prime_r$ of any signed 3-cycles $T_r$ of $\mathcal{Q}$ where $k<r$ and $z_{k+1}$ or $y_{k+1}$ is a vertex if $T_r$.  Comparing this construction of $\overline{R}$ to the quiver $\overline{R}_{k+1}$ appearing in Figure~\ref{importantfullsub1}, we verify that $\overline{R}$ indeed equals $\overline{R}_{k+1}$ and satisfies the five properties listed as bullet points in Lemma \ref{newmainlemma}.

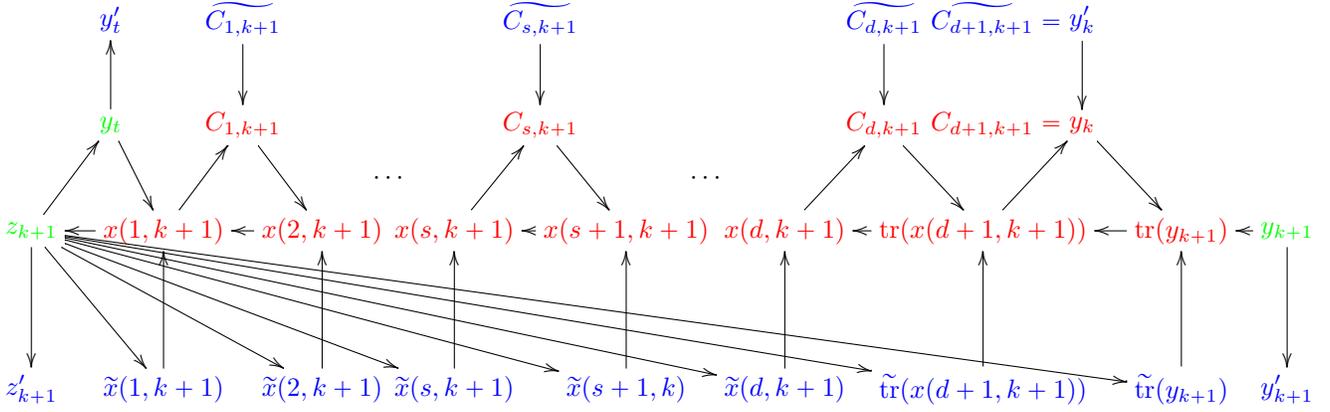
\begin{figure}
$\begin{xy} 0;<1pt,0pt>:<0pt,-1pt>:: 
(465,40) *+{\textcolor{green}{y_{k+1}}} ="0",
(40,40) *+{\textcolor{red}{x(1,k+1)}} ="1",
(-10,40) *+{\textcolor{green}{z_{k+1}}} ="2",
(20,0) *+{\textcolor{green}{y_t}} ="3",
(100,40) *+{\textcolor{red}{x(2,k+1)}} ="4",
(70,0) *+{\textcolor{red}{C_{1,k+1}}} ="5",
(150,40) *+{\textcolor{red}{x(s,k+1)}} ="8",
(215,40) *+{\textcolor{red}{x(s+1,k+1)}} ="9",
(182.5,0) *+{\textcolor{red}{C_{s,k+1}}} ="10",
(275,40) *+{\textcolor{red}{x(d,k+1)}} ="11",
(125,20) *+{\cdots} ="12",
(245,20) *+{\cdots} ="13",
(350,40) *+{\textcolor{red}{\text{tr}(x(d+1,k+1))}} ="14",
(312.5,0) *+{\textcolor{red}{C_{d,k+1}}} ="15",
(425,40) *+{\textcolor{red}{\text{tr}(y_{k+1})}} ="16",
(387.5,0) *+{\textcolor{red}{y_k}} ="17",
(355,0) *+{\textcolor{red}{{C_{d+1,k+1}}=}} ="18",
%(420,0) *+{\textcolor{green}{z_m}} ="19",
(425,100) *+{\textcolor{blue}{\widetilde{\text{tr}}(y_{k+1})}} ="20",
(40,100) *+{\textcolor{blue}{\widetilde{x}(1,k+1)}} ="21",
(100,100) *+{\textcolor{blue}{\widetilde{x}(2,k+1)}} ="22",
(150,100) *+{\textcolor{blue}{\widetilde{x}(s,k+1)}} ="24",
(215,100) *+{\textcolor{blue}{\widetilde{x}(s+1,k)}} ="25",
(275,100) *+{\textcolor{blue}{\widetilde{x}(d,k+1)}} ="26",
(350,100) *+{\textcolor{blue}{\widetilde{\text{tr}}(x(d+1,k+1))}} ="27",
(387.5,-40) *+{\textcolor{blue}{y_k^\prime}} ="28",
%(420,-40) *+{\textcolor{blue}{z_m^\prime}} ="29",
(465,100) *+{\textcolor{blue}{y_{k+1}^\prime}} ="30",
(-10,100) *+{\textcolor{blue}{z_{k+1}^\prime}} ="31",
(355,-40) *+{\textcolor{blue}{{\widetilde{C_{d+1,k+1}}=}}} ="32",
(70,-40) *+{\textcolor{blue}{\widetilde{C_{1,k+1}}}} = "33",
(182.5,-40) *+{\textcolor{blue}{\widetilde{C_{s,k+1}}}} = "35",
(20,-40) *+{\textcolor{blue}{y_{t}^\prime}} = "36",
(312.5,-40) *+{\textcolor{blue}{\widetilde{C_{d,k+1}}}} = "37",
"0", {\ar"30"},
"0", {\ar"16"},
"1", {\ar"2"},
"1", {\ar"5"},
"3", {\ar"1"},
"4", {\ar"1"},
"2", {\ar"21"},
"2", {\ar"22"},
"2", {\ar"24"},
"2", {\ar"25"},
"2", {\ar"26"},
"2", {\ar"27"},
"2", {\ar"3"},
"2", {\ar"20"},
"2", {\ar"31"},
"5", {\ar"4"},
"21", {\ar"1"},
"8", {\ar"10"},
"14", {\ar"11"},
"14", {\ar"17"},
%"16", {\ar"19"},
%"19", {\ar"0"},
"20", {\ar"16"},
"22", {\ar"4"},
"10", {\ar"9"},
"9", {\ar"8"},
"11", {\ar"15"},
"24", {\ar"8"},
"25", {\ar"9"},
"15", {\ar"14"},
"16", {\ar"14"},
"26", {\ar"11"},
"17", {\ar"16"},
"27", {\ar"14"},
%"18", {\ar"17"},
%"19", {\ar"20"},
"28", {\ar"17"},
%"18", {\ar"32"},
%"19", {\ar"28"},
%"19", {\ar"29"},
"33", {\ar"5"},
"35", {\ar"10"},
"3", {\ar"36"},
"37", {\ar"15"},
\end{xy}$
\caption{The quiver $\underline{\mu}_k(\overline{R}_k)$ obtained by mutating $\overline{R}_k$ in Case i).}
\label{mukRcaseI}
\end{figure}

Next, suppose that we are in Case ii). %
%NOTE FOR NOW: THIS CASE WILL LEAD TO $\overline{R}_{k+1}$ WITH NO $y_t$.
%
In this situation, we have that $m = k+1$ and the vertices $y_\ell, y_\ell^\prime, z_\ell,$ and $z_\ell^\prime$ do not belong to $\overline{R}_k$. Now Lemma~\ref{Lemma:mukR} implies that $\underline{\mu}_k(\overline{R}_k)$ has the form shown in Figure~\ref{mukRii}.  We let $T_p$ (resp. $T_q$) be the signed $3$-cycle not equal to $T_{k+1}$ that contains $z_{k+1}$ (resp. $y_{k+1}$), if they exist.  Define $\overline{R}$ to be the ice quiver that is a full subquiver of $\underline{\mu}_k \circ \cdots \circ \underline{\mu}_1\circ \underline{\mu}_0(\widehat{Q})$ on the vertices $$y_{k+1}, y_{k+1}^\prime, z_{k+1}, z_{k+1}^\prime, \text{tr}(y_k), z_k^\prime, y_k, y_k^\prime, \text{tr}(x(d)), x_k^\prime, y_{p}, y_{p}^\prime, z_{p}, z_{p}^\prime, y_q, y_q^\prime, z_q, z_q^\prime$$ where we include $y_p$ and $y_p^\prime$ (resp. $z_p$, $z_p^\prime$, $y_q, y_q^\prime, z_q,$ and $z_q^\prime$) in $\overline{R}$ if and only if $y_p$ and $y_p^\prime$ (resp. $z_q$, $z_q^\prime$, $y_q, y_q^\prime, z_q,$ and $z_q^\prime$) appear in $\underline{\mu}_k(\overline{R}_k)$, i.e. depending on if $\text{deg}(y_{k+1})=4$ and if $\text{deg}(z_{k+1}) = 4$.  See Figure \ref{mu0subII}.

\begin{figure}
$$\raisebox{-.53in}{$\underline{\mu}_k(\overline{R}_k) =$} \begin{xy} 0;<1pt,0pt>:<0pt,-1pt>:: 
(40,40) *+{\textcolor{red}{z_k}} ="1",
(80,40) *+{\textcolor{red}{x(1,k)}} ="4",
(60,0) *+{\textcolor{red}{C_{1,k}}} ="5",
(140,40) *+{\textcolor{red}{x(s-1,k)}} ="8",
(190,40) *+{\textcolor{red}{x(s,k)}} ="9",
(165,0) *+{\textcolor{red}{C_{s,k}}} ="10",
(250,40) *+{\textcolor{red}{x(d-1,k)}} ="11",
(105,40) *+{\cdots} ="12",
(215,40) *+{\cdots} ="13",
(310,40) *+{\textcolor{red}{\text{tr}(x(d,k))}} ="14",
(280,0) *+{\textcolor{red}{C_{d,k}}} ="15",
(360,40) *+{\textcolor{red}{\text{tr}(y_k)}} ="16",
(335,0) *+{\textcolor{red}{y_k}} ="17",
(360,0) *+{\textcolor{green}{y_{k+1}}} ="18",
(410,40) *+{\textcolor{green}{z_{k+1}}} ="19",
(360,100) *+{\textcolor{blue}{z_k^\prime}} ="20",
(40,100) *+{\textcolor{blue}{\widetilde{x}(1,k)}} ="21",
(80,100) *+{\textcolor{blue}{\widetilde{x}(2,k)}} ="22",
(140,100) *+{\textcolor{blue}{\widetilde{x}(s,k)}} ="24",
(190,100) *+{\textcolor{blue}{\widetilde{x}(s+1,k)}} ="25",
(250,100) *+{\textcolor{blue}{\widetilde{\text{tr}}(x(d,k))}} ="26",
(310,100) *+{\textcolor{blue}{\widetilde{\text{tr}}(y_k)}} ="27",
(335,-40) *+{\textcolor{blue}{y_k^\prime}} ="28",
(410,-40) *+{\textcolor{blue}{z_{k+1}^\prime}} ="29",
%(400,100) *+{\textcolor{blue}{y_\ell^\prime}} ="30",
%(0,100) *+{\textcolor{blue}{z_\ell^\prime}} ="31",
(360,-40) *+{\textcolor{blue}{y_{k+1}^\prime}} ="32",
(60,-40) *+{\textcolor{blue}{\widetilde{C_{1,k}}}} = "33",
(165,-40) *+{\textcolor{blue}{\widetilde{C_{s,k}}}} = "35",
%(20,-40) *+{\textcolor{blue}{y_{t}^\prime}} = "36",
(280,-40) *+{\textcolor{blue}{\widetilde{C_{d,k}}}} = "37",
%"0", {\ar"30"},
%"0", {\ar"16"},
%"1", {\ar"2"},
"1", {\ar"5"},
%"3", {\ar"1"},
"4", {\ar"1"},
%"2", {\ar"21"},
%"2", {\ar"22"},
%"2", {\ar"24"},
%"2", {\ar"25"},
%"2", {\ar"26"},
%"2", {\ar"27"},
%"2", {\ar"3"},
%"2", {\ar"20"},
%"2", {\ar"31"},
"5", {\ar"4"},
"21", {\ar"1"},
"8", {\ar"10"},
"14", {\ar"11"},
"14", {\ar"17"},
"16", {\ar"19"},
%"19", {\ar"0"},
"20", {\ar"16"},
"22", {\ar"4"},
"10", {\ar"9"},
"9", {\ar"8"},
"11", {\ar"15"},
"24", {\ar"8"},
"25", {\ar"9"},
"15", {\ar"14"},
"16", {\ar"14"},
"26", {\ar"11"},
"17", {\ar"16"},
"27", {\ar"14"},
"18", {\ar"17"},
"19", {\ar"20"},
"28", {\ar"17"},
"18", {\ar"32"},
"19", {\ar"28"},
"19", {\ar"29"},
"33", {\ar"5"},
"35", {\ar"10"},
%"3", {\ar"36"},
"37", {\ar"15"},
\end{xy}$$
\caption{The quiver obtained by mutating $\overline{R}_k$ in Case ii). }
\label{mukRii}
\end{figure}
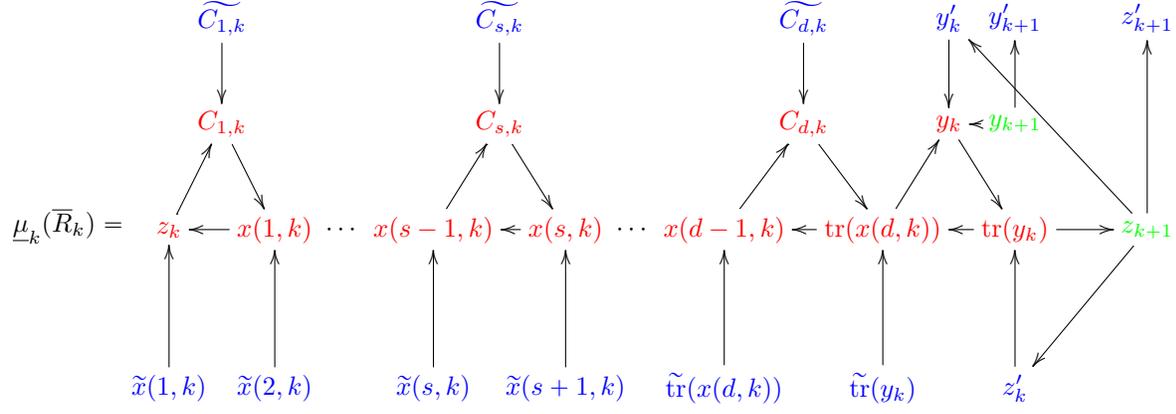

\begin{figure}
$$\raisebox{-.53in}{$\overline{R} =$} \begin{xy} 0;<1pt,0pt>:<0pt,-1pt>:: 
(-20,40) *+{\textcolor{green}{z_{k+1}}} ="0",
(-60,40) *+{\textcolor{green}{y_{p}}} ="3",
(-40,0) *+{\textcolor{green}{z_{p}}} ="7",
(40,40) *+{\textcolor{red}{\text{tr}(x(1,k+1))}} ="1",
(120,40) *+{\textcolor{red}{\text{tr}{(y_{k+1})}}} ="4",
(80,0) *+{\textcolor{red}{x_k\cdot\sigma_{k}^{-1}}} ="5",%was \text{tr}(x(d))
(40,0) *+{\textcolor{red}{C_{1,k+1} =}} ="39",
(50,-40) *+{\textcolor{blue}{\widetilde{C_{1,k+1}} =}} ="40",
(170,40) *+{\textcolor{green}{y_{k+1}}} ="6",
(40,100) *+{\textcolor{blue}{z_k^\prime}} ="20",
(120,100) *+{\textcolor{blue}{y^\prime_{k}}} ="21",
(170,100) *+{\textcolor{blue}{y_{k+1}^\prime}} ="22",
(-20,100) *+{\textcolor{blue}{z_{k+1}^\prime}} ="30",
(-60,100) *+{\textcolor{blue}{y_{p}^\prime}} ="33",
(-40,-40) *+{\textcolor{blue}{z_p^\prime}} ="34",
(80,-40) *+{\textcolor{blue}{x_k^\prime}} ="32",
(190,-40) *+{\textcolor{blue}{y_{q}^\prime}} ="35",
(190,0) *+{\textcolor{green}{y_{q}}} ="36",
(210,40) *+{\textcolor{green}{z_{q}}} ="37",
(210,100) *+{\textcolor{blue}{z_{q}^\prime}} ="38",
"1", {\ar"0"},
"0", {\ar"30"},
"4", {\ar"1"},
"20", {\ar"1"},
"0", {\ar"21"},
"0", {\ar"20"},
"6", {\ar"4"},
"6", {\ar"36"},
"21", {\ar"4"},
"6", {\ar"22"},
"1", {\ar"5"},
"7", {\ar"34"},
"7", {\ar"0"},
"0", {\ar"3"},
"3", {\ar"33"},
"3", {\ar"7"},
"5", {\ar"4"},
"32", {\ar"5"},
"36", {\ar"37"},
"36", {\ar"35"},
"37", {\ar"6"},
"37", {\ar"38"},
\end{xy}$$
\caption{The quiver $\overline{R} = \overline{R}_{k+1}$ that we obtain in Case ii).}
\label{mu0subII}
\end{figure}
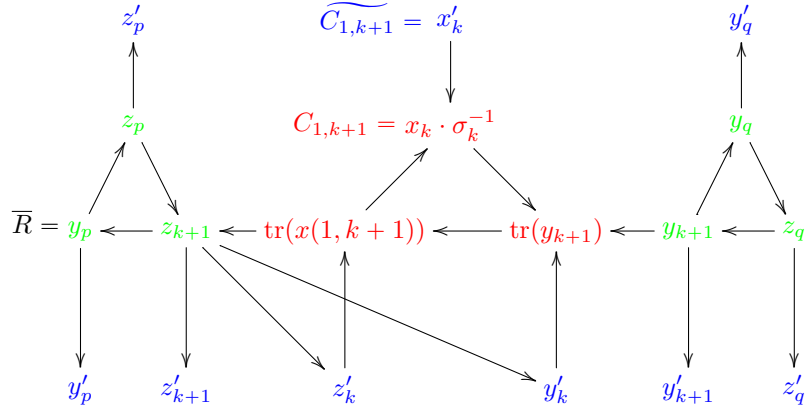

Just as above, we claim that the ice quiver $\overline{R}$ equals $\overline{R}_{k+1}$ and satifies the five bullet points in the statement of  Lemma \ref{newmainlemma}. It is easy to see that $\overline{R}$ is a full subquiver of $\underline{\mu}_k \circ \cdots \circ \underline{\mu}_1\circ \underline{\mu}_0(\widehat{Q})$ that includes every frozen vertex that is connected to a mutable vertex appearing in Figure~\ref{mu0subII} by at least one arrow in $\underline{\mu}_{k}\circ \cdots \circ \underline{\mu}_1 \circ \underline{\mu}_0(\widehat{Q})$. In particular, $\underline{\mu}_k(\overline{R}_k)$ has this property and no vertices of $T_p$ or $T_q$ and neither $y_{k+1}$ nor $z_{k+1}$ have been mutated in $\underline{\mu}_{k}\circ \cdots \circ \underline{\mu}_1 \circ \underline{\mu}_0(\widehat{Q})$. Furthermore, defining $m_1^{(k+1)} \in [n]$, $x(0,k+1)$, and  $x(1,k+1) \in (\widehat{Q})_0$ just as we did in Case i), following Definition~\ref{specialT}, and using the fact that $\text{sgn}(T_{k+1}) = +$, we have $m_1^{(k+1)} = k+1$, $x(0,k+1) = z_{k+1}$, and $x(1,k+1) = x_{k+1}$. Hence we obtain that $$\text{tr}(x(1,k+1)) = \text{tr}(x_{k+1})=\text{tr}(y_k) \ \text{and} \ z^\prime_{m_1^{(k+1)}-1} = z_k^\prime,$$  as desired. Additionally, the fact that $\text{sgn}(T_{k+1}) = +$ also implies that $$\text{tr}(y_{k+1}) = x_{k+1} = y_k \ \text{and} \ x_{m_1^{(k+1)}}^\prime = x_k^\prime = y_k^\prime,$$ as desired. These calculations are reflected in the quiver $\overline{R}$ shown in Figure~\ref{mu0subII}, thus verifying the first three bullet points of Lemma~\ref{newmainlemma}.

Furthermore, since $\text{deg}(z_{k}) = 2,$ there is no signed 3-cycle $T_t$ with $k+1 < t$ such that $\text{tr}|_{k+1,t}(y_t) = z_{k+1}$ in $\underline{\mu}_k\circ \cdots \circ \underline{\mu}_1\circ \underline{\mu}_0(\widehat{Q})$ vertex $x_t$ has been mutated exactly once. The fourth bullet point follows. Now observe that $\widetilde{\text{tr}}(y_k) = x_{m_1}^\prime = x_{k}^\prime$. Since we have applied a maximal green sequence to $\mathcal{Q}_k$ and since $\text{tr}(x(d,k))$ is only connected to the frozen vertex $x_k^\prime$, Proposition 2.10 of \cite{BDP} implies that $\text{tr}(x(d,k)) = x_k\cdot \sigma_{k}^{-1}$. We thus have the fifth bullet point.

%\textcolor{blue}{NOTE FOR NOW: CASE III: creates a $y_t$.}
Case iii) is similar to Case ii), but with some key differences. In this situation, we again have that $m = k+1$ but this time both $y_\ell$ and $z_\ell$ are relevant. Now Lemma~\ref{Lemma:mukR} implies that $\underline{\mu}_k(\overline{R}_k)$ has the form shown in Figure~\ref{mukRiii}.  We let $T_p$ (resp. $T_q$) be the signed $3$-cycles incident to $z_{k+1}$ (resp. $y_{k+1}$) if they exist.  Define $\overline{R}$ to be the ice quiver that is a full subquiver of $\underline{\mu}_k \circ \cdots \circ \underline{\mu}_1\circ \underline{\mu}_0(\widehat{Q})$ on the vertices $$y_{k+1}, y_{k+1}^\prime, z_{k+1}, z_{k+1}^\prime, \text{tr}(y_k), z_k^\prime, y_k, y_k^\prime, \text{tr}(x(d)), x_k^\prime, y_\ell, y_\ell^\prime, y_{p}, y_{p}^\prime, z_{p}, z_{p}^\prime, y_q, y_q^\prime, z_q, z_q^\prime$$ where we include $y_p$ and $y_p^\prime$ (resp. $z_p$, $z_p^\prime$, $y_q, y_q^\prime, z_q,$ and $z_q^\prime$) in $\overline{R}$ if and only if $y_p$ and $y_p^\prime$ (resp. $z_q$, $z_q^\prime$, $y_q, y_q^\prime, z_q,$ and $z_q^\prime$) appear in $\underline{\mu}_k(\overline{R}_k)$, i.e. depending on if $\text{deg}(y_{k+1})=4$ and if $\text{deg}(z_{k+1}) = 4$.  See Figure \ref{mu0subIII}. %we include $y_\ell$ and $y_\ell^\prime$ for sure, and the other vertices if and only if they appear in $\underline{\mu}_k(\overline{R}_k)$

We claim that the ice quiver $\overline{R}$ has the properties in the statement of Lemma~\ref{newmainlemma}. It is easy to see that $\overline{R}$ is a full subquiver of $\underline{\mu}_k \circ \cdots \circ \underline{\mu}_1\circ \underline{\mu}_0(\widehat{Q}).$ That $\overline{R}$ includes every frozen vertex that is connected to a mutable vertex appearing in Figure~\ref{mu0subIII} by at least one arrow in $\underline{\mu}_{k}\circ \cdots \circ \underline{\mu}_1 \circ \underline{\mu}_0(\widehat{Q})$ follows from the fact that $\underline{\mu}_k(\overline{R}_k)$ has this property and from the fact that no vertices of $T_p$ or $T_q$  and neither $y_{k+1}$ nor $z_{k+1}$ have been mutated in  $\underline{\mu}_{k}\circ \cdots \circ \underline{\mu}_1 \circ \underline{\mu}_0(\widehat{Q})$. Now observe that $\widetilde{\text{tr}}(y_k) = x_{m_1}^\prime = x_{k}^\prime$. As in Case ii), Proposition 2.10 of \cite{BDP} implies that $\text{tr}(x(d,k)) = x_k\cdot \sigma_{k}^{-1}$.

%\textcolor{red}{NOTE: In this paragraph, switching back and forth between $\overline{R}_k$ and $\overline{R}$.  Is this a typo or on purpose?  Either way, Rephrase exposition??}

%Now, by Remark \ref{remark:restrict_mgs} and Proposition 2.10 of \cite{BDP}, we have that the same associated permutation is relevant for $\underline{\mu}_k$ and $\underline{\mu}_{k+1}$ and by following this permutation we see that $\text{tr}(x(d)) = x_k\cdot \sigma_{k}^{-1}$.  \textcolor{blue}{REPHRASE THIS??}%that no vertices of $T_{k+1}$ or $T_q$ have been mutated in

Let $m_1^{(k+1)} \in [n]$ be the integer from the definition of $\underline{\mu}_{k+1}$, and let $x(0,k+1), x(1,k+1) \in (Q)_0$ be the vertices from the definition of $\underline{\mu}_{k+1}.$ As in Case ii), we are using the fact that $\text{sgn}(T_{k+1}) = +$. Now notice that $m_1^{(k+1)} = m_d^{(k+1)} = k+1$, $x(0,k+1) = z_{k+1}$, and $x(1,k+1) = x_{k+1}$. We now obtain that $$\text{tr}(x(d,k+1)) = \text{tr}(x_{k+1})=\text{tr}(y_k),$$ as desired. The fact that $\text{sgn}(T_{k+1}) = +$ implies that $$\text{tr}(y_{k+1}) = x_{k+1} = y_k.$$

Since $\text{deg}(z_{k+1})=4$, the vertices $y_\ell$ and $y_\ell^\prime$ both appear in $\overline{R}$. Now it is clear that $\text{deg}(z_{k+1}) = 4$ if and only if $\text{tr}|_{k+1,\ell}(y_\ell) = z_{k+1}$ and the signed 3-cycle $T_\ell$ has the property that vertex $x_\ell$ has been mutated exactly once in $\underline{\mu}_{k}\circ \cdots \circ \underline{\mu}_1\circ \underline{\mu}_0(\widehat{Q}).$  Hence we see that the vertex $y_\ell$ is positioned in $\overline{R}$ exactly where $y_t$ is positioned in $\overline{R}_{k+1}$, see Figure \ref{importantfullsub1}. These calculations are reflected in the quiver $\overline{R}$ shown in Figure~\ref{mu0subIII}, and we see that this quiver has the properties that the desired quiver $\overline{R}_{k+1}$ should have. The proof of the five bullet points of Lemma \ref{newmainlemma} in Case iii) concludes in the same way as the proof for Case ii).

%\textcolor{red}{REPHRASE? Also, $y_\ell$ and $y_\ell^\prime$ appear in $\overline{R}$ since $\text{deg}(z_{k+1}) = 4$.}

In addition, we illustrate how in Case iii), for each $c \in [n]$ satisfying $k<c \le \ell$ there is an ice quiver $\overline{R}_{c,\ell}$ that is isomorphic to $\overline{R}_{\ell}$ and that appears as a full subquiver of $\underline{\mu}_{c-1}\circ \cdots \circ \underline{\mu}_1\circ \underline{\mu}_0(\widehat{Q})$. Furthermore, we show that $\overline{R}_{\ell,\ell} = \overline{R}_\ell.$ This analysis will be used in the argument for Case iv), which is given below.

%\textcolor{red}{(IMPROVE THIS PARAGRAPH'S EXPOSITION??)}
As we are in Case iii), we know that both vertices $y_k$ and $z_k$ are of degree $4$ and the signed 3-cycles $T_k$, $T_{k+1}$, and $T_\ell$ appear in a full subquiver of $\mathcal{Q}$ of the form shown in Figure~\ref{st1-relabel} or \ref{st2-relabel}. It follows that $y_\ell$ and $z_\ell$, which are incident to $z_k$ in $\underline{\mu}_{k-1}\circ \cdots \circ \underline{\mu}_1\circ \underline{\mu}_0(\widehat{{Q}})$, will not be mutated until after applying the mutation sequences $\underline{\mu}_{k}, \underline{\mu}_{k+1}, \dots, \underline{\mu}_{r}$ where $k \le r < \ell$ (see Figure~\ref{scrshot0}). To be precise, the quiver in Figure~\ref{scrshot0} is a full subquiver of $\underline{\mu}_{k-1}\circ \cdots \circ \underline{\mu}_1\circ \underline{\mu}_0(\widehat{Q})$, which we define as follows.  Letting $k < r < \ell$ be the integer such that $z_r = \text{tr}(y_\ell)$ and $e$ such that $x(e,r) = x_{k+1}=y_k$, this full subquiver includes the vertices of $\overline{R}_{k-1}$ as well as the mutable vertices of the signed 3-cycles $T_{m^{(r)}_e} = T_{k+1}, T_{m^{(r)}_{e-1}}, \ldots, T_{m^{(r)}_2}, T_{m^{(r)}_1} = T_r$, as in Definition~\ref{specialT}, and their corresponding frozen vertices.

We now mutate the quiver shown in Figure~\ref{scrshot0} along $\underline{\mu}_k$. By Lemma~\ref{Lemma:mukR}, this does not affect the full subquiver of $\underline{\mu}_{k-1}\circ \cdots \circ \underline{\mu}_1\circ \underline{\mu}_0(\widehat{Q})$ on the vertices $(\widehat{Q})_0\backslash(\overline{R}_k)_0$. Thus we conclude that $\underline{\mu}_{k}\circ \cdots \circ \underline{\mu}_1\circ \underline{\mu}_0(\widehat{Q})$ has the quiver shown in Figure~\ref{scrshot-aftermu} as a full subquiver. We observe that the permutation $\sigma_{k-1}^{-1}$ has the vertices $y_k$ and $z_k$ as fixed points. However, $\tau_k^{-1}$ maps $z_k \mapsto \text{tr}(y_k)$ and fixes $y_k$.  These equalities are illustrated in Figure~\ref{scrshot-aftermu}.

Next, we relabel the vertices of the quiver in Figure~\ref{scrshot-aftermu} to obtain the quiver shown in Figure~\ref{scrshot2}. In particular, since  $\text{sgn}(T_\ell) = -$ with $x_\ell = z_k$, note that $z_k = x(1,\ell)$, $x(s,k) = x(s+1,\ell)$, and $\text{tr}(y_k) = \text{tr}|_{k,\ell}(y_\ell)$. Define $\overline{R}_{k+1,\ell}$ to be the full subquiver of $\underline{\mu}_{k}\circ \cdots \circ \underline{\mu}_1\circ \underline{\mu}_0(\widehat{Q})$ on the red vertices appearing in Figure~\ref{scrshot2}, the neighbors of $y_\ell$ and $z_\ell$, as well as the frozen vertices to which these all are connected. One observes that $\overline{R}_{k+1,\ell}$ and $\overline{R}_\ell$ are isomorphic as ice quivers. Furthermore, we will see that $\overline{R}_{k+1,\ell}$ has the same vertices as $\overline{R}_\ell$ with the exceptions of $y_k$ and $\text{tr}|_{k,\ell}(y_\ell).$

For $k < c \leq \ell$, we define $\overline{R}_{c,\ell}$ analogously as the full subquiver of $\underline{\mu}_{c-1}\circ \cdots \circ \underline{\mu}_1\circ \underline{\mu}_0(\widehat{Q})$ on the set of vertices $(\overline{R}_{k+1,\ell})_0\cdot \tau_{k+1}^{-1} \tau_{k+2}^{-1}\cdots  \tau_{c-1}^{-1}$. With this definition, we observe that $\overline{R}_{c+1,\ell}$ is identical to $\overline{R}_{c,\ell}$ except possibly at two vertices.  In particular, for $k < c \leq \ell$, if $T_c$ does not appear in Figure \ref{st1-relabel} (resp. Figure \ref{st2-relabel}), then the mutation sequence $\underline{\mu}_c$ does not involve any vertices that appear in $\overline{R}_{c,\ell}$.  Consequently, after mutation by $\underline{\mu}_c$, we obtain $\overline{R}_{c+1,\ell} = \overline{R}_{c,\ell}$
%do not change can be ignored when constructing $\overline{R}_\ell$.

%\textcolor{red}{(IMPROVE THIS PARAGRAPH'S EXPOSITION??)}
On the other hand, when $T_c$, for $k < c \leq \ell$, i.e. $c=m_{s}^{(r)}$ for some $s$, does appear in Figure \ref{st1-relabel} (resp. Figure \ref{st2-relabel}), then the mutation sequence $\underline{\mu}_c$, as indicated by bold arrows in Figures \ref{scrshot2} and \ref{scrshot6}, involves vertices $y_{k}\cdot \sigma^{-1}_{c-1}$ and $z_{k}\cdot \sigma^{-1}_{c-1}$.  In this case, $\overline{R}_{c+1,\ell} \cong \overline{R}_{c,\ell}$ with the relabeling $y_{k}\cdot \sigma^{-1}_{c-1} \mapsto y_k\cdot \sigma^{-1}_{c}$ and $z_k\cdot \sigma^{-1}_{c-1} \mapsto z_k\cdot \sigma^{-1}_{c}$ since each application of $\underline{\mu}_{c}$ permutes these two vertices by $\tau_{c}^{-1}$.  This isomorphism of full subquivers follows from Lemma \ref{Lemma:mukR}.

We obtain the identity $z_k\cdot \sigma^{-1}_{c} = \text{tr}|_{m^{(r)}_{s},\ell}(y_\ell)$ for $m^{(r)}_{s}\le c < m^{(r)}_{s-1}$ when $s \in [2,e]$ or $r = m^{(r)}_{1}\le c < \ell$ when $s=1$, which is implicit in Figure~\ref{scrshot6}, by Lemma~\ref{TRsigma_c}. We leave this argument until after completing the proof of Lemma~\ref{newmainlemma} (see below). We also observe, by the specialization $c=\ell-1$, that $y_k \cdot \sigma^{-1}_{\ell-1} = C_{d+1,\ell}$ and $z_k\cdot \sigma^{-1}_{\ell-1} = \text{tr}(y_\ell)$.  Consequently, we eventually arrive at the configuration in Figure~\ref{scrshot8} with configurations of the form as in Figure  \ref{scrshot6} as intermediate steps.  In summary, we conclude that $\overline{R}_{\ell,\ell} = \overline{R}_\ell$ as desired.

\begin{figure}
$$\raisebox{-.53in}{$\underline{\mu}_k(\overline{R}_k) =$} \begin{xy} 0;<1pt,0pt>:<0pt,-1pt>:: 
(400,40) *+{\textcolor{green}{y_{\ell}}} ="0",
(40,40) *+{\textcolor{red}{z_k}} ="1",
(0,40) *+{\textcolor{green}{z_{\ell}}} ="2",
(20,0) *+{\textcolor{green}{y_t}} ="3",
(80,40) *+{\textcolor{red}{x(1,k)}} ="4",
(60,0) *+{\textcolor{red}{C_{1,k}}} ="5",
%(120,40) *+{\textcolor{red}{x(2)}} ="6",
%(100,0) *+{\textcolor{red}{C_2}} ="7",
(140,40) *+{\textcolor{red}{x(s-1,k)}} ="8",
(190,40) *+{\textcolor{red}{x(s,k)}} ="9",
(165,0) *+{\textcolor{red}{C_{s,k}}} ="10",
(250,40) *+{\textcolor{red}{x(d-1,k)}} ="11",
(105,40) *+{\cdots} ="12",
(215,40) *+{\cdots} ="13",
(310,40) *+{\textcolor{red}{\text{tr}(x(d,k))}} ="14",
(280,0) *+{\textcolor{red}{C_{d,k}}} ="15",
(360,40) *+{\textcolor{red}{\text{tr}(y_k)}} ="16",
(335,0) *+{\textcolor{red}{y_k}} ="17",
(360,0) *+{\textcolor{green}{y_m}} ="18",
(380,0) *+{\textcolor{green}{z_m}} ="19",
(360,100) *+{\textcolor{blue}{z_k^\prime}} ="20",
(40,100) *+{\textcolor{blue}{\widetilde{x}(1,k)}} ="21",
(80,100) *+{\textcolor{blue}{\widetilde{x}(2,k)}} ="22",
%(160,100) *+{\textcolor{blue}{\widetilde{x}(3)}} ="23",
(140,100) *+{\textcolor{blue}{\widetilde{x}(s,k)}} ="24",
(190,100) *+{\textcolor{blue}{\widetilde{x}(s+1,k)}} ="25",
(250,100) *+{\textcolor{blue}{\widetilde{\text{tr}}(x(d,k))}} ="26",
(310,100) *+{\textcolor{blue}{\widetilde{\text{tr}}(y_k)}} ="27",
(335,-40) *+{\textcolor{blue}{y_k^\prime}} ="28",
(380,-40) *+{\textcolor{blue}{z_m^\prime}} ="29",
(400,100) *+{\textcolor{blue}{y_\ell^\prime}} ="30",
(0,100) *+{\textcolor{blue}{z_\ell^\prime}} ="31",
(360,-40) *+{\textcolor{blue}{y_m^\prime}} ="32",
(60,-40) *+{\textcolor{blue}{\widetilde{C_{1,k}}}} = "33",
%(100,-40) *+{\textcolor{blue}{\widetilde{C_2}}} = "34",
(165,-40) *+{\textcolor{blue}{\widetilde{C_{s,k}}}} = "35",
(20,-40) *+{\textcolor{blue}{y_{t}^\prime}} = "36",
(280,-40) *+{\textcolor{blue}{\widetilde{C_{d,k}}}} = "37",
"0", {\ar"30"},
"0", {\ar"16"},
"1", {\ar"2"},
"1", {\ar"5"},
"3", {\ar"1"},
"4", {\ar"1"},
"2", {\ar"21"},
"2", {\ar"22"},
%"2", {\ar"23"},
"2", {\ar"24"},
"2", {\ar"25"},
"2", {\ar"26"},
"2", {\ar"27"},
"2", {\ar"3"},
"2", {\ar"20"},
"2", {\ar"31"},
%"4", {\ar"7"},
"5", {\ar"4"},
%"6", {\ar"4"},
"21", {\ar"1"},
%"7", {\ar"6"},
"8", {\ar"10"},
"14", {\ar"11"},
"14", {\ar"17"},
"16", {\ar"19"},
"19", {\ar"0"},
"20", {\ar"16"},
"22", {\ar"4"},
%"23", {\ar"6"},
"10", {\ar"9"},
"9", {\ar"8"},
"11", {\ar"15"},
"24", {\ar"8"},
"25", {\ar"9"},
"15", {\ar"14"},
"16", {\ar"14"},
"26", {\ar"11"},
"17", {\ar"16"},
"27", {\ar"14"},
"18", {\ar"17"},
"19", {\ar"20"},
"28", {\ar"17"},
"18", {\ar"32"},
"19", {\ar"28"},
"19", {\ar"29"},
"33", {\ar"5"},
%"34", {\ar"7"},
"35", {\ar"10"},
"3", {\ar"36"},
"37", {\ar"15"},
\end{xy}$$
\caption{The quiver obtained by mutating $\overline{R}_k$ in Case iii).}
\label{mukRiii}
\end{figure}
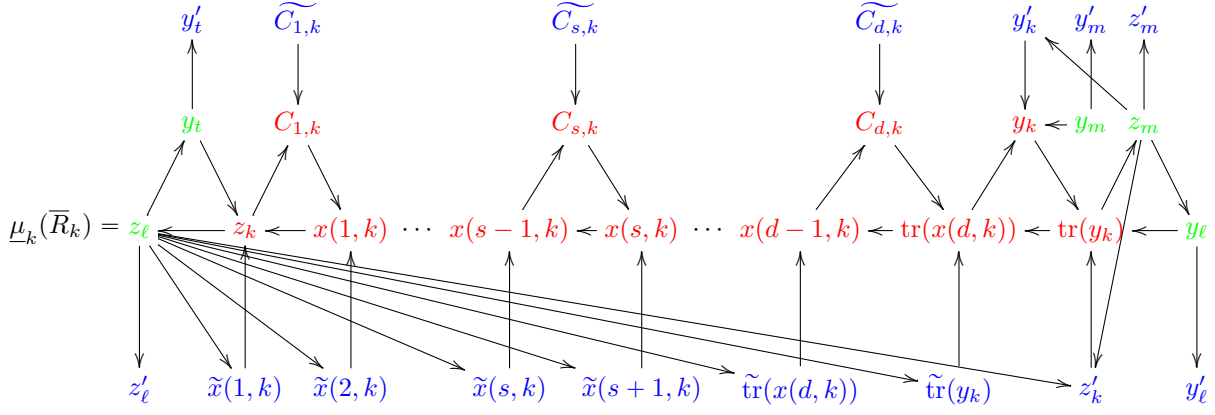

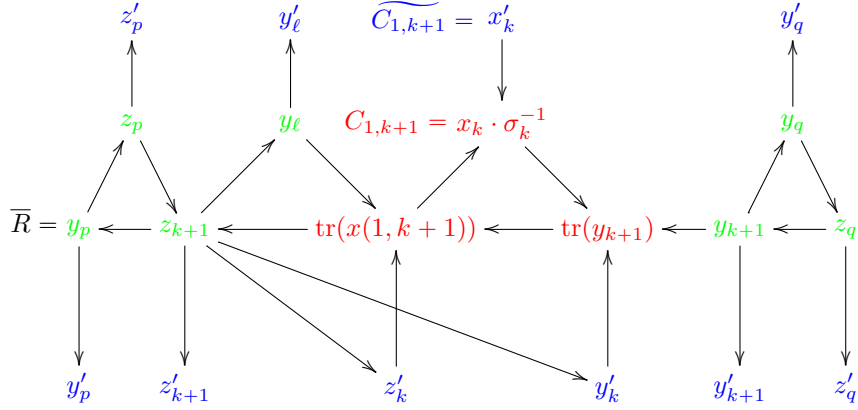
\begin{figure}
$$\raisebox{-.53in}{$\overline{R} =$} \begin{xy} 0;<1pt,0pt>:<0pt,-1pt>:: 
(-40,40) *+{\textcolor{green}{z_{k+1}}} ="0",
(0,0) *+{\textcolor{green}{y_{\ell}}} ="42",
(0,-40) *+{\textcolor{blue}{y_{\ell}^\prime}} ="41",
(-80,40) *+{\textcolor{green}{y_{p}}} ="3",
(-60,0) *+{\textcolor{green}{z_{p}}} ="7",
(40,40) *+{\textcolor{red}{\text{tr}(x(1,k+1))}} ="1",
(120,40) *+{\textcolor{red}{\text{tr}{(y_{k+1})}}} ="4",
(80,0) *+{\textcolor{red}{x_k\cdot\sigma_{k}^{-1}}} ="5",%was \text{tr}(x(d))
(40,0) *+{\textcolor{red}{C_{1,k+1} =}} ="39",
(50,-40) *+{\textcolor{blue}{\widetilde{C_{1,k+1}} =}} ="40",
(170,40) *+{\textcolor{green}{y_{k+1}}} ="6",
(40,100) *+{\textcolor{blue}{z_k^\prime}} ="20",
(120,100) *+{\textcolor{blue}{y^\prime_{k}}} ="21",
(170,100) *+{\textcolor{blue}{y_{k+1}^\prime}} ="22",
(-40,100) *+{\textcolor{blue}{z_{k+1}^\prime}} ="30",
(-80,100) *+{\textcolor{blue}{y_{p}^\prime}} ="33",
(-60,-40) *+{\textcolor{blue}{z_p^\prime}} ="34",
(80,-40) *+{\textcolor{blue}{x_k^\prime}} ="32",
(190,-40) *+{\textcolor{blue}{y_{q}^\prime}} ="35",
(190,0) *+{\textcolor{green}{y_{q}}} ="36",
(210,40) *+{\textcolor{green}{z_{q}}} ="37",
(210,100) *+{\textcolor{blue}{z_{q}^\prime}} ="38",
"1", {\ar"0"},
"0", {\ar"30"},
"0", {\ar"42"},
"4", {\ar"1"},
"20", {\ar"1"},
"0", {\ar"21"},
"0", {\ar"20"},
"6", {\ar"4"},
"6", {\ar"36"},
"21", {\ar"4"},
"6", {\ar"22"},
"1", {\ar"5"},
"7", {\ar"34"},
"7", {\ar"0"},
"0", {\ar"3"},
"3", {\ar"33"},
"3", {\ar"7"},
"5", {\ar"4"},
"32", {\ar"5"},
"36", {\ar"37"},
"36", {\ar"35"},
"37", {\ar"6"},
"37", {\ar"38"},
"42", {\ar"41"},
"42", {\ar"1"},
\end{xy}$$
\caption{The quiver $\overline{R} = \overline{R}_{k+1}$ that we obtain in Case iii).}%WILL ILLUSTRATE CASE III INSTEAD
\label{mu0subIII}
\end{figure}

Next, suppose we are in Case iv).  Since deg$(z_k)=2$, this case is similar to Case i).  However, here we have deg$(y_k)=2$ as well, and so the quiver $\underline{\mu}_k(\overline{R}_k)$ looks like Figure~\ref{mukR-reorganized}, but without $y_{\ell}, y_{\ell}^\prime$, $z_{\ell}$, $z_{\ell}^\prime$, $y_m, y_m^\prime, z_m,$ nor $z_m^\prime$.  The green vertex $y_t$ and $y_t^\prime$ may or may not appear in the quiver $\underline{\mu}_k(\overline{R}_k)$. In the latter case, $k = n$ and we have applied the entire mutation sequence $\underline{\mu}$ to $\widehat{Q}$. In the former case, we see that $t = k+1$, and $T_t$ can be realized as a signed 3-cycle $T_\ell$ appearing in one of Figures~\ref{st1-relabel} or \ref{st2-relabel}. Now by the argument at the end of Case iii), $\overline{R}_{t,t} = \overline{R}_{t}$ is indeed a full subquiver of Figure~\ref{scrshot8} with the desired properties. The five bullet points of Lemma \ref{newmainlemma} follow immediately. %\textcolor{red}{??We note that  $\underline{\mu}_k(\overline{R}_k)$ is at this point isomorphic to a full subquiver of $\widecheck{Q}$?? HOW ARE WE USING THIS?}.

Lastly, for all four cases, we wish to describe the quiver obtained by 
$\underline{\mu}_{k} \circ \cdots \circ \underline{\mu}_1\circ \underline{\mu}_0(\widehat{\mathcal{Q}_{k,k}})$.  To this end, we decompose the vertices of 
$\widehat{\mathcal{Q}_{k,k}}$ into two sets:
(1) $(\widehat{\mathcal{Q}_{k,k}})_0 \setminus (\overline{R}_k)_0$ and (2) $(\overline{R}_k)_0 \cap (\widehat{\mathcal{Q}_{k,k}})_0$.  
By induction, we have 
$$\underline{\mu}_{k-1} \circ \cdots \circ \underline{\mu}_1\circ \underline{\mu}_0(\widehat{\mathcal{Q}_{k-1,k-1}}) = \widecheck{\mathcal{Q}_{k-1,k-1}}\cdot \sigma_{k-1}$$ and we observe that 
$(\widehat{\mathcal{Q}_{k,k}})_0 \setminus (\overline{R}_k)_0 \subset (\widehat{\mathcal{Q}_{k-1,k-1}})_0$.  The fifth bullet point of Lemma ~\ref{Lemma:mukR} implies that the quiver 
$\underline{\mu}_{k-1} \circ \cdots \circ \underline{\mu}_1\circ \underline{\mu}_0\left(\widehat{\mathcal{Q}_{k,k}}|_{(\widehat{\mathcal{Q}_{k,k}})_0\setminus (\overline{R}_k)_0}\right)\footnote{We define $\widehat{\mathcal{Q}_{k,k}}|_{(\widehat{\mathcal{Q}_{k,k}})_0\setminus(\overline{R}_k)_0}$ (resp. $\widecheck{\mathcal{Q}_{k,k}}|_{(\widehat{\mathcal{Q}_{k,k}})_0\setminus(\overline{R}_k)_0}$)  to be the ice quiver that is a full subquiver of $\widehat{\mathcal{Q}_{k,k}}$ (resp. $\widecheck{\mathcal{Q}_{k,k}}$) on the vertices of ${(\widehat{\mathcal{Q}_{k,k}})_0\setminus(\overline{R}_k)_0}$.}$
 is unchanged by the mutation sequence $\underline{\mu}_k$, and the permutation $\tau_k$ fixes all vertices in $(\widehat{\mathcal{Q}_{k,k}})_0\setminus (\overline{R}_k)_0$.  It follows that 
$$\underline{\mu}_{k} \circ \cdots \circ \underline{\mu}_1\circ \underline{\mu}_0\left(\widehat{\mathcal{Q}_{k,k}}|_{(\widehat{\mathcal{Q}_{k,k}})_0\setminus (\overline{R}_k)_0}\right) = \left(\widecheck{\mathcal{Q}_{k,k}}|_{(\widehat{\mathcal{Q}_{k,k}})_0\setminus (\overline{R}_k)_0}\right)\cdot \sigma_{k}.$$
Additionally, the first bullet point of Lemma~\ref{Lemma:mukR} indicates how the vertices of the second set, i.e. $(\overline{R}_k)_0 \cap (\widehat{\mathcal{Q}_{k,k}})_0$, are affected by $\underline{\mu}_k$.  Comparing Figures~\ref{importantfullsub1} and \ref{mukR}, we see that the vertices of $\overline{R}_{k}$ have been permuted cyclically exactly as described by $\tau_k$.  We conclude that 
$$\underline{\mu}_{k}\circ \cdots \circ \underline{\mu}_1\circ \underline{\mu}_0(\widehat{\mathcal{Q}_{k,k}}) =
\widecheck{\mathcal{Q}_{k,k}}\cdot \tau_k\sigma_{k-1}
= \widecheck{\mathcal{Q}_{k,k}}\cdot \sigma_k
$$ which completes the proof of Lemma \ref{newmainlemma}.\end{proof}

\begin{figure}[h]
$$\begin{xy} 0;<1pt,0pt>:<0pt,-1pt>:: 
(0,30) *+{x({d+1,\ell})} ="0",
(30,0) *+{y_{m_{d+1}}} ="1",
(60,30) *+{x{(d,\ell)}} ="2",
(240,90) *+{y_{k+1}} ="3",
(270,120) *+{z_{k+1}} ="4",
(120,30) *+{y_{k_2}} ="5",
(90,60) *+{x({d-1,\ell})} ="6",
(120,90) *+{\ddots} ="7",
(210,180) *+{z_\ell} ="8",
(240,150) *+{y_\ell} ="10",
(330,120) *+{y_{{m^{(r)}_{e-1}}}} ="11",
(300,150) *+{x(e-2,r)} ="12",
(330,180) *+{\ddots} ="13",
(390,240) *+{x(0,r)} ="14",
(360,210) *+{x(1,r)} ="15",
(420,210) *+{y_r} ="16",
(210,120) *+{y_{k}} ="17",
(150,120) *+{x(2,\ell)} ="18",
(180,150) *+{x(1,\ell)} ="19",
(390,250) *+{\equalto{}{}} ="21",
(390,260) *+{\text{tr}(y_\ell)} ="22",
"0", {\ar^{\alpha_{m_{d+1}}}"1"},
"2", {\ar^{\gamma_{m_{d+1}}}_{T_{m_{d+1}}}"0"},
"1", {\ar^{\beta_{m_{d+1}}}"2"},
"2", {\ar^{\alpha_{m_d}}_{T_{m_{d}}}"5"},
"6", {\ar^{\gamma_{m_{d}}}"2"},
"3", {\ar^{\beta_{k+1}}"4"},
"17", {\ar^{\alpha_{k+1}}"3"},
"4", {\ar^{\alpha_{m^{(r)}_{e-1}}}_{T_{{m^{(r)}_{e-1}}}}"11"},
"12", {\ar"4"},
"4", {\ar_{T_{k+1}}"17"},
"5", {\ar^{\beta_{m_d}}"6"},
"8", {\ar^{\gamma_{\ell}}"19"},
"10", {\ar^{\beta_\ell}"8"},
"19", {\ar_{T_{\ell}}"10"},
"11", {\ar^{\beta_{m^{(r)}_{e-1}}}"12"},
"14", {\ar"15"},
"16", {\ar^{\beta_{r}}"14"},
"15", {\ar^{\alpha_r}_{T_r}"16"},
"18", {\ar^{\alpha_{k}}_{T_{k}}"17"},
"17", {\ar"19"},
"19", {\ar^{\gamma_{k}}"18"},
\end{xy}$$
\caption{A full subquiver of $\mathcal{Q}$ showing one possible configuration of the signed 3-cycles $T_k$, $T_{k+1}$, and $T_\ell$, as described in the proof of Lemma~\ref{newmainlemma} at the end of Case iii).}
\label{st1-relabel}
\end{figure}

\begin{figure}[h]
$$\begin{xy} 0;<1pt,0pt>:<0pt,-1pt>:: 
(-10,65) *+{x(2,\ell)} ="0",
(30,30) *+{y_{k}} ="1",
(70,65) *+{x(1,\ell)} ="2",
(75,-10) *+{y_{k+1}} ="3",
(120,30) *+{z_{k+1}} ="4",
(140,65) *+{y_{\ell}} ="5",
(105,100) *+{z_\ell} ="6",
%(110,120) *+{\vdots} ="7",
%(125,150) *+{x({1})} ="8",
%(195,150) *+{y_{k}} ="9",
%(160,185) *+{x(0)} ="10",
(210,30) *+{y_{{m^{(r)}_{e-1}}}} ="11",
(165,70) *+{x(e-2,r)} ="12",
(190,90) *+{\ddots} ="13",
(210,120) *+{x(1,r)} ="14",
(300,120) *+{y_{r}} ="15",
(255,160) *+{x(0,r)} ="16",
(255,170) *+{\equalto{}{}} ="21",
(255,180) *+{\text{tr}(y_\ell)} ="22",
"0", {\ar^{\alpha_{k}}"1"},
"2", {\ar^{\gamma_{k}}_{T_{k}}"0"},
"1", {\ar"2"},
"1", {\ar^{\alpha_{k+1}}"3"},
"4", {\ar_{T_{k+1}}"1"},
"2", {\ar_{T_{\ell}}"5"},
"6", {\ar^{\gamma_{\ell}}"2"},
"3", {\ar^{\beta_{k+1}}"4"},
"4", {\ar^{\alpha_{m^{(r)}_{e-1}}}_{T_{m^{(r)}_{e-1}}}"11"},
"12", {\ar"4"},
"5", {\ar^{\beta_\ell}"6"},
%"8", {\ar_{T_{k}=T_{m_1}}"9"},
%"10", {\ar^{\gamma_{m_1}}"8"},
%"9", {\ar^{\beta_k}"10"},
"11", {\ar^{\beta_{m^{(r)}_{e-1}}}"12"},
"14", {\ar^{\alpha_{r}}_{T_{r}}"15"},
"16", {\ar"14"},
"15", {\ar^{\beta_{r}}"16"},
\end{xy}$$
\caption{A full subquiver of $\mathcal{Q}$ showing the other possible configuration of the signed 3-cycles $T_k$, $T_{k+1}$, and $T_\ell$, as described in the proof of Lemma~\ref{newmainlemma} at the end of Case iii).}
\label{st2-relabel}
\end{figure}

%(360,0)

\begin{lemma}\label{TRsigma_c}
Using the notation from the proof of Lemma~\ref{newmainlemma}, for any $s \in [2,e]$ and any $c \in [n]$ satisfying $m^{(r)}_{s}\le c < m^{(r)}_{s-1}$, one has $z_k\cdot \sigma^{-1}_{c} = \text{tr}|_{m^{(r)}_{s},\ell}(y_\ell)$ (see Figure~\ref{scrshot6}). Additionally, for any $c \in [n]$ satisfying $r = m_1^{(r)} \le c < \ell$ we have $z_k\cdot\sigma_c^{-1} = \text{tr}(y_\ell)$.
%\textcolor{blue}{I don't see why we want to allow $c = \ell$. I don't think it benefits us and I am not clear on why it is true.}\textcolor{red}{I THINK WE SHOULD ALLOW $c=\ell$ IN THESE INEQUALITIES.}
%\text{tr}|_{r,\ell}(y_\ell)
\end{lemma}
\begin{proof}
For $c = k+1$, we have 

$$\begin{array}{rcll}
z_k\cdot \sigma_{k+1}^{-1} & = & z_k\cdot\sigma_k^{-1}\tau^{-1}_{k+1}\\ & = & \text{tr}(y_k) \cdot \tau^{-1}_{k+1} & \text{(see Figure~\ref{scrshot-aftermu})}\\
& = & \text{tr}(x(1,k+1))\cdot\tau^{-1}_{k+1} & \text{(using that $\text{sgn}(T_{k+1}) = +$)}\\
& = & x(0,k+1) & \text{(by the definition of $\tau_{k+1}$)}\\ %\textcolor{blue}{~=z_k}\\
& = & z_k & \text{(by Definition~\ref{specialT})}\\
& = & \text{tr}|_{k+1,\ell}(y_\ell), & \text{(by Definition~\ref{transportDEF})}
\end{array}$$ 

\noindent as desired. Now suppose that $z_k\cdot \sigma_{c}^{-1} = \text{tr}|_{m^{(r)}_s,\ell}(y_\ell)$ where $s \in [e]$ and $m^{(r)}_{s}\le c < m^{(r)}_{s-1}$. Then for $c \in [n]$ satisfying $m^{(r)}_{s-1}\le c < m^{(r)}_{s-2}$ we have 

$$\begin{array}{rcll}
z_k\cdot \sigma_{c}^{-1} & = & z_k \cdot \sigma_{m^{(r)}_{s-1}-1}^{-1}\tau^{-1}_{{m^{(r)}_{s-1}}} \cdots\tau^{-1}_{c}\\ & = & \text{tr}|_{m_s^{(r)},\ell}(y_\ell) \cdot\tau^{-1}_{{m^{(r)}_{s-1}}} \cdots\tau^{-1}_{c} & \text{(by induction)}\\
& = & x(1,m^{(r)}_{s-1})\cdot\tau^{-1}_{{m^{(r)}_{s-1}}}\tau^{-1}_{{m^{(r)}_{s-1}}+1} \cdots\tau^{-1}_{c} & \text{(note that $x(1,m^{(r)}_{s-1}) = x(s-1,r)$)}\\
& = & x(0,m^{(r)}_{s-1})\cdot\tau^{-1}_{{m^{(r)}_{s-1}}+1} \cdots\tau^{-1}_{c} & \text{(note that $x(0,m^{(r)}_{s-1}) = x(s-2,r)$)}\\
& = & \text{tr}|_{m_{s-1}^{(r)},\ell}(y_\ell)\cdot\tau^{-1}_{{m^{(r)}_{s-1}}+1} \cdots\tau^{-1}_{c} & \text{(note that $x(0,m^{(r)}_{s-1}) = \text{tr}|_{m_{s-1}^{(r)},\ell}(y_\ell)$)}\\
& = & \text{tr}|_{m_{s-1}^{(r)},\ell}(y_\ell),
\end{array}$$ 

\noindent as desired. We remark that the last equality in the previous computation follows from observing that $\text{tr}|_{m_{s-1}^{(r)},\ell}(y_\ell)$ is not mutated in any of the mutation sequences $\underline{\mu}_{m^{(r)}_{s-1}+1}, \ldots, \underline{\mu}_c$, and thus it is unaffected by any of the permutations $\tau^{-1}_{m^{(r)}_{s-1}+1}, \ldots, \tau^{-1}_c$. By induction, this completes the proof.
\end{proof}

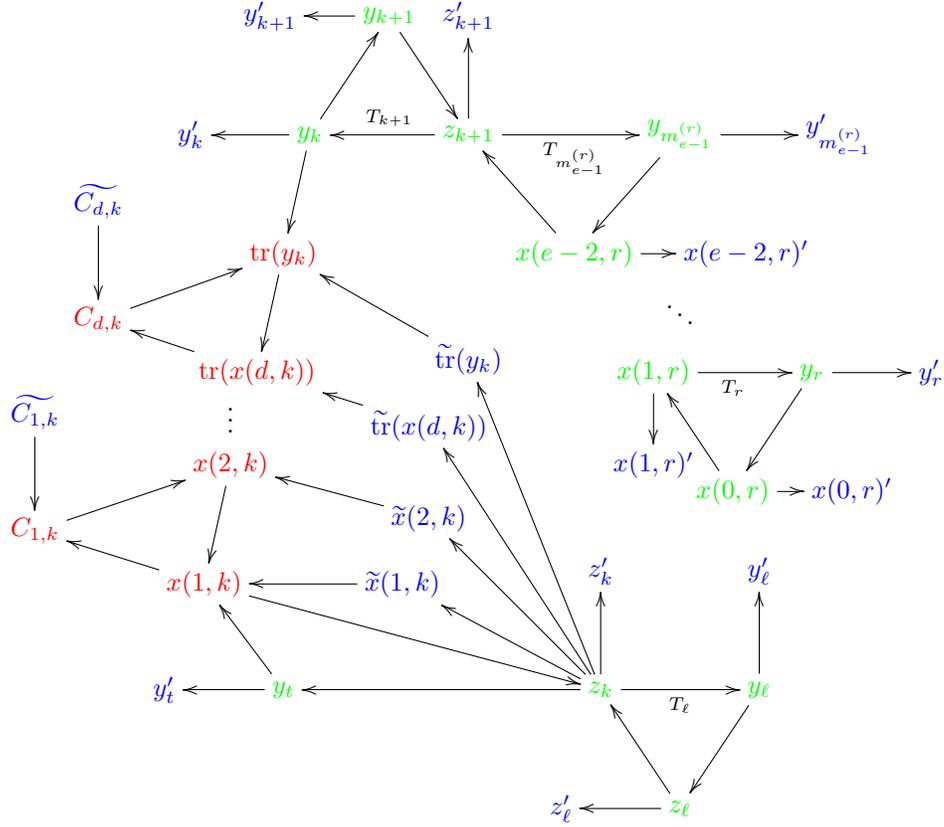
\begin{figure}
$$\begin{xy} 0;<1pt,0pt>:<0pt,-1pt>:: 
(-22,195) *+{\textcolor{red}{C_{1,k}}} ="0",
(52,170) *+{\textcolor{red}{x(2,k)}} ="1",
(62,135) *+{\textcolor{red}{\text{tr}(x(d,k))}} ="40",
(52,150) *+{\vdots} ="42",
(127,155) *+{\textcolor{blue}{\widetilde{\text{tr}}(x(d,k))}} ="41",
(72,90) *+{\textcolor{red}{\text{tr}(y_k)}} ="2",
(2,115) *+{\textcolor{red}{C_{d,k}}} ="3",
(42,215) *+{\textcolor{red}{x(1,k)}} ="4",
(112,0) *+{\textcolor{green}{y_{k+1}}} ="5",%165 y-coord
(142,45) *+{\textcolor{green}{z_{k+1}}} ="6",
(222,45) *+{\textcolor{green}{y_{m^{(r)}_{e-1}}}} ="7",
(182,90) *+{\textcolor{green}{x(e-2,r)}} ="8",
(222,110) *+{\ddots} ="9",
(212,135) *+{\textcolor{green}{x(1,r)}} ="10",
(272,135) *+{\textcolor{green}{y_r}} ="11",
(242,180) *+{\textcolor{green}{x(0,r)}} ="12",
(82,45) *+{\textcolor{green}{y_k}} ="13",
(192,255) *+{\textcolor{green}{z_k}} ="14",
(72,255) *+{\textcolor{green}{y_t}} ="48",
(27,255) *+{\textcolor{blue}{y_t^\prime}} ="49",
(-22,150) *+{\textcolor{blue}{\widetilde{C_{1,k}}}} ="15",
(127,190) *+{\textcolor{blue}{\widetilde{x}(2,k)}} ="16",
(117,215) *+{\textcolor{blue}{\widetilde{x}(1,k)}} ="17",
(2,70) *+{\textcolor{blue}{\widetilde{C_{d,k}}}} ="18",
(142,130) *+{\textcolor{blue}{\widetilde{\text{tr}}(y_k)}} ="19",
(67,0) *+{\textcolor{blue}{y_{k+1}^\prime}} ="20",
(142,0) *+{\textcolor{blue}{z_{k+1}^\prime}} ="21",
(282,45) *+{\textcolor{blue}{y_{m^{(r)}_{e-1}}^\prime}} ="22",
(247,90) *+{\textcolor{blue}{x(e-2,r)^\prime}} ="23",
%(287,255) *+{\textcolor{blue}{25}} ="24",
(212,170) *+{\textcolor{blue}{x(1,r)^\prime}} ="25",
(317,135) *+{\textcolor{blue}{y_r^\prime}} ="26",
(287,180) *+{\textcolor{blue}{x(0,r)^\prime}} ="27",
(37,45) *+{\textcolor{blue}{y_k^\prime}} ="28",
(192,210) *+{\textcolor{blue}{z_k^\prime}} ="29",
(252,255) *+{\textcolor{green}{y_\ell}} ="43",
(252,210) *+{\textcolor{blue}{y_\ell^\prime}} ="44",
(222,300) *+{\textcolor{green}{z_\ell}} ="45",
(177,300) *+{\textcolor{blue}{z_\ell^\prime}} ="46",
"0", {\ar"1"},
"4", {\ar"0"},
"15", {\ar"0"},
"2", {\ar"40"},
"40", {\ar"3"},
"1", {\ar"4"},
"16", {\ar"1"},
"3", {\ar"2"},
%"2", {\ar"6"},
"13", {\ar"2"},
"19", {\ar"2"},
"5", {\ar"6"},
"18", {\ar"3"},
"4", {\ar"14"},
"17", {\ar"4"},
"5", {\ar"20"},
"6", {\ar_{T_{m^{(r)}_{e-1}}}"7"},
"8", {\ar"6"},
"6", {\ar_{T_{k+1}}"13"},
%"6", {\ar"18"},
%"6", {\ar"19"},
"6", {\ar"21"},
"7", {\ar"8"},
"7", {\ar"22"},
"8", {\ar"23"},
"10", {\ar_{T_{r}}"11"},
"12", {\ar"10"},
"10", {\ar"25"},
"11", {\ar"12"},
"11", {\ar"26"},
"12", {\ar"27"},
"13", {\ar"28"},
"13", {\ar"5"},
"14", {\ar"16"},
"14", {\ar"17"},
"14", {\ar"19"},
"14", {\ar"29"},
"14", {\ar"41"},
"14", {\ar"48"},
"14", {\ar_{T_\ell}"43"},
"41", {\ar"40"},
"43", {\ar"44"},
"43", {\ar"45"},
"45", {\ar"46"},
"45", {\ar"14"},
"48", {\ar"4"},
"48", {\ar"49"},
\end{xy}$$
\caption{The full subquiver of $\underline{\mu}_{k-1}\circ \cdots \circ \underline{\mu}_1\circ \underline{\mu}_0(\widehat{Q})$ on the vertices and frozen vertices shown here.} % mutating along $\underline{\mu}_k
\label{scrshot0}
\end{figure}

\begin{figure}
$$\begin{xy} 0;<1pt,0pt>:<0pt,-1pt>:: 
(-22,195) *+{\textcolor{red}{C_{1,k}}} ="0",
(52,170) *+{\textcolor{red}{x(1,k)}} ="1",
(62,135) *+{\textcolor{red}{\text{tr}(x(d,k))}} ="40",
(52,150) *+{\vdots} ="42",
(127,155) *+{\textcolor{blue}{\widetilde{\text{tr}}(y_k)}} ="41",  %%%%%%% =\widetilde{\text{tr}}(x(d+1,k))}}
(72,90) *+{\textcolor{red}{\text{tr}(y_k)=z_k\cdot \sigma_{k}^{-1}}} ="2",
(2,115) *+{\textcolor{red}{y_k = y_k\cdot \sigma_{k}^{-1}}} ="3",%y_k=C_{d+1}
(42,215) *+{\textcolor{red}{z_k}} ="4",
(82,45) *+{\textcolor{green}{y_{k+1}}} ="5",
(122,45) *+{\textcolor{green}{z_{k+1}}} ="6",
(242,45) *+{\textcolor{green}{y_{m^{(r)}_{e-1}}}} ="7",
(182,90) *+{\textcolor{green}{x(e-2,r)}} ="8",
(222,110) *+{\ddots} ="9",
(212,135) *+{\textcolor{green}{x(1,r)}} ="10",
(272,135) *+{\textcolor{green}{y_r}} ="11",
(242,180) *+{\textcolor{green}{x(0,r)}} ="12",
(152,90) *+{\textcolor{green}{y_\ell}} ="13",
(192,255) *+{\textcolor{green}{z_\ell}} ="14",
(82,255) *+{\textcolor{green}{y_t}} ="50",
(37,255) *+{\textcolor{blue}{y_t^\prime}} ="51",
(-22,150) *+{\textcolor{blue}{\widetilde{C_{1,k}}}} ="15",
(127,190) *+{\textcolor{blue}{\widetilde{x}(2,k)}} ="16",
(117,215) *+{\textcolor{blue}{\widetilde{x}(1,k)}} ="17",
(2,70) *+{\textcolor{blue}{y_k'}} ="18", %y_k' = \widetilde{C_{d+1}}
(142,130) *+{\textcolor{blue}{z_k'}} ="19", %%%%%%%% \widetilde{\text{tr}}|_{k+1,\ell}(y_\ell)}}
(82,0) *+{\textcolor{blue}{y_{k+1}^\prime}} ="20",
(122,0) *+{\textcolor{blue}{z_{k+1}^\prime}} ="21",
(287,45) *+{\textcolor{blue}{y_{m^{(r)}_{e-1}}^\prime}} ="22",
(247,90) *+{\textcolor{blue}{x(e-2,r)^\prime}} ="23",
%(287,255) *+{\textcolor{blue}{25}} ="24",
(212,170) *+{\textcolor{blue}{x(1,r)^\prime}} ="25",
(317,135) *+{\textcolor{blue}{y_r^\prime}} ="26",
(287,180) *+{\textcolor{blue}{x(0,r)^\prime}} ="27",
(182,135) *+{\textcolor{blue}{y_\ell^\prime}} ="28",
(237,255) *+{\textcolor{blue}{z_\ell^\prime}} ="29",
"0", {\ar"1"},
"4", {\ar"0"},
"15", {\ar"0"},
"2", {\ar"40"},
"40", {\ar"3"},
"1", {\ar"4"},
"16", {\ar"1"},
"3", {\ar"2"},
"2", {\ar"6"},
"13", {\ar"2"},
"19", {\ar"2"},
"5", {\ar"3"},
"18", {\ar"3"},
"4", {\ar"14"},
"17", {\ar"4"},
"5", {\ar"20"},
"6", {\ar_{T_{m^{(r)}_{e-1}}}"7"},
"8", {\ar"6"},
"6", {\ar"13"},
"6", {\ar"18"},
"6", {\ar"19"},
"6", {\ar"21"},
"7", {\ar"8"},
"7", {\ar"22"},
%"8", {\ar"9"},
%"10", {\ar"8"},
"8", {\ar"23"},
%"9", {\ar"10"},
%"9", {\ar"24"},
"10", {\ar_{T_{r}}"11"},
"12", {\ar"10"},
"10", {\ar"25"},
"11", {\ar"12"},
"11", {\ar"26"},
"12", {\ar"27"},
"13", {\ar"28"},
"14", {\ar"16"},
"14", {\ar"17"},
"14", {\ar"19"},
"14", {\ar"29"},
"14", {\ar"41"},
"14", {\ar"50"},
"41", {\ar"40"},
"50", {\ar"4"},
"50", {\ar"51"},
\end{xy}$$
\caption{The full subquiver of $\underline{\mu}_{k}\circ \cdots \circ \underline{\mu}_1\circ \underline{\mu}_0(\widehat{Q})$ on the vertices and frozen vertices shown here.}%After mutation along $\underline{\mu}_k$. \textcolor{red}{$C_i$ depends on what 3-cycles have been mutated}
\label{scrshot-aftermu}
\end{figure}

\begin{figure}
$$\begin{xy} 0;<1pt,0pt>:<0pt,-1pt>:: 
(-22,195) *+{\textcolor{red}{C_{1,k}}} ="0",
(52,170) *+{\textcolor{red}{x(2,\ell)}} ="1",
(62,135) *+{\textcolor{red}{\text{tr}(x(d+1,\ell))}} ="40",
(52,150) *+{\vdots} ="42",
(127,155) *+{\textcolor{blue}{\widetilde{\text{tr}}(x(d+1,\ell))}} ="41",
(72,90) *+{\textcolor{red}{\text{tr}|_{k,\ell}(y_\ell) = z_k\cdot \sigma_k^{-1}}} ="2",%k was k+1
(2,115) *+{\textcolor{red}{y_k = y_k\cdot \sigma_k^{-1}}} ="3",%C_{d+1}
(42,215) *+{\textcolor{red}{x(1,\ell)}} ="4",
(82,45) *+{\textcolor{green}{y_{k+1}}} ="5",
(122,45) *+{\textcolor{green}{z_{k+1}}} ="6",
(242,45) *+{\textcolor{green}{y_{m^{(r)}_{e-1}}}} ="7",
(182,90) *+{\textcolor{green}{x(e-2,r)}} ="8",
(222,110) *+{\ddots} ="9",
(212,135) *+{\textcolor{green}{x(1,r)}} ="10",
(272,135) *+{\textcolor{green}{y_r}} ="11",
(242,180) *+{\textcolor{green}{x(0,r)}} ="12",
(152,90) *+{\textcolor{green}{y_\ell}} ="13",
(192,255) *+{\textcolor{green}{z_\ell}} ="14",
(82,255) *+{\textcolor{green}{y_t}} ="50",
(37,255) *+{\textcolor{blue}{y_t^\prime}} ="51",
(-22,150) *+{\textcolor{blue}{\widetilde{C_{1,k}}}} ="15",
(127,190) *+{\textcolor{blue}{\widetilde{x}(2,\ell)}} ="16",
(117,215) *+{\textcolor{blue}{\widetilde{x}(1,\ell)}} ="17",
(2,70) *+{\textcolor{blue}{y_k^\prime}} ="18",%\widetilde{C_{d+1}}
(152,130) *+{\textcolor{blue}{z_k^\prime}} ="19",%was \widetilde{\text{tr}}|_{k,\ell}(y_\ell) %k was k+1
(82,0) *+{\textcolor{blue}{y_{k+1}^\prime}} ="20",
(122,0) *+{\textcolor{blue}{z_{k+1}^\prime}} ="21",
(287,45) *+{\textcolor{blue}{y_{m^{(r)}_{e-1}}^\prime}} ="22",
(247,90) *+{\textcolor{blue}{x(e-2,r)^\prime}} ="23",
%(287,255) *+{\textcolor{blue}{25}} ="24",
(212,170) *+{\textcolor{blue}{x(1,r)^\prime}} ="25",
(317,135) *+{\textcolor{blue}{y_r^\prime}} ="26",
(287,180) *+{\textcolor{blue}{x(0,r)^\prime}} ="27",
(182,135) *+{\textcolor{blue}{y_\ell^\prime}} ="28",
(237,255) *+{\textcolor{blue}{z_\ell^\prime}} ="29",
"0", {\ar"1"},
"4", {\ar"0"},
"15", {\ar"0"},
"2", {\ar"40"},
"40", {\ar"3"},
"1", {\ar"4"},
"16", {\ar"1"},
"3", {\ar@<0.1mm>"2"},
"3", {\ar@<0.05mm>"2"},
"3", {\ar"2"},
"3", {\ar@<-0.05mm>"2"},
"3", {\ar@<-0.1mm>"2"},
"2", {\ar@<0.1mm>"6"},
"2", {\ar@<0.05mm>"6"},
"2", {\ar"6"},
"2", {\ar@<-0.05mm>"6"},
"2", {\ar@<-0.1mm>"6"},
"13", {\ar"2"},
"19", {\ar"2"},
"5", {\ar@<0.1mm>"3"},
"5", {\ar@<0.05mm>"3"},
"5", {\ar"3"},
"5", {\ar@<-0.05mm>"3"},
"5", {\ar@<-0.1mm>"3"},
"18", {\ar"3"},
"4", {\ar"14"},
"17", {\ar"4"},
"5", {\ar"20"},
"6", {\ar_{T_{m^{(r)}_{e-1}}}"7"},
"8", {\ar"6"},
"6", {\ar"13"},
"6", {\ar"18"},
"6", {\ar"19"},
"6", {\ar"21"},
"7", {\ar"8"},
"7", {\ar"22"},
%"8", {\ar"9"},
%"10", {\ar"8"},
"8", {\ar"23"},
%"9", {\ar"10"},
%"9", {\ar"24"},
"10", {\ar_{T_{r}}"11"},
"12", {\ar"10"},
"10", {\ar"25"},
"11", {\ar"12"},
"11", {\ar"26"},
"12", {\ar"27"},
"13", {\ar"28"},
"14", {\ar"16"},
"14", {\ar"17"},
"14", {\ar"19"},
"14", {\ar"29"},
"14", {\ar"41"},
"14", {\ar"50"},
"41", {\ar"40"},
"50", {\ar"4"},
"50", {\ar"51"},
\end{xy}$$
\caption{The quiver that appears in Figure~\ref{scrshot-aftermu} with its vertex labels updated so that the part of $\overline{R}_{k+1,\ell}$ that appears here looks like the corresponding part of the quiver $\overline{R}_\ell$.}
%MAKE MUTATION SEQUENCE $\mu_{k+1}$ CLEARER WITH BOLDED ARROWS?? %Based on screen shot 2. \textcolor{blue}{$C_i$ depends on what 3-cycles have been mutated}
\label{scrshot2}
\end{figure}
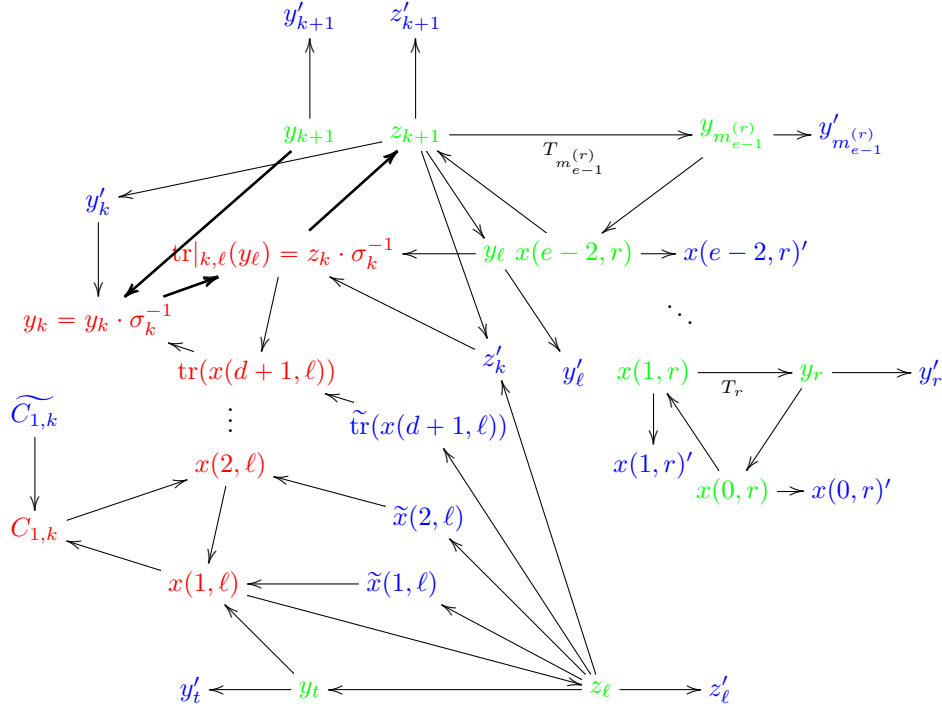

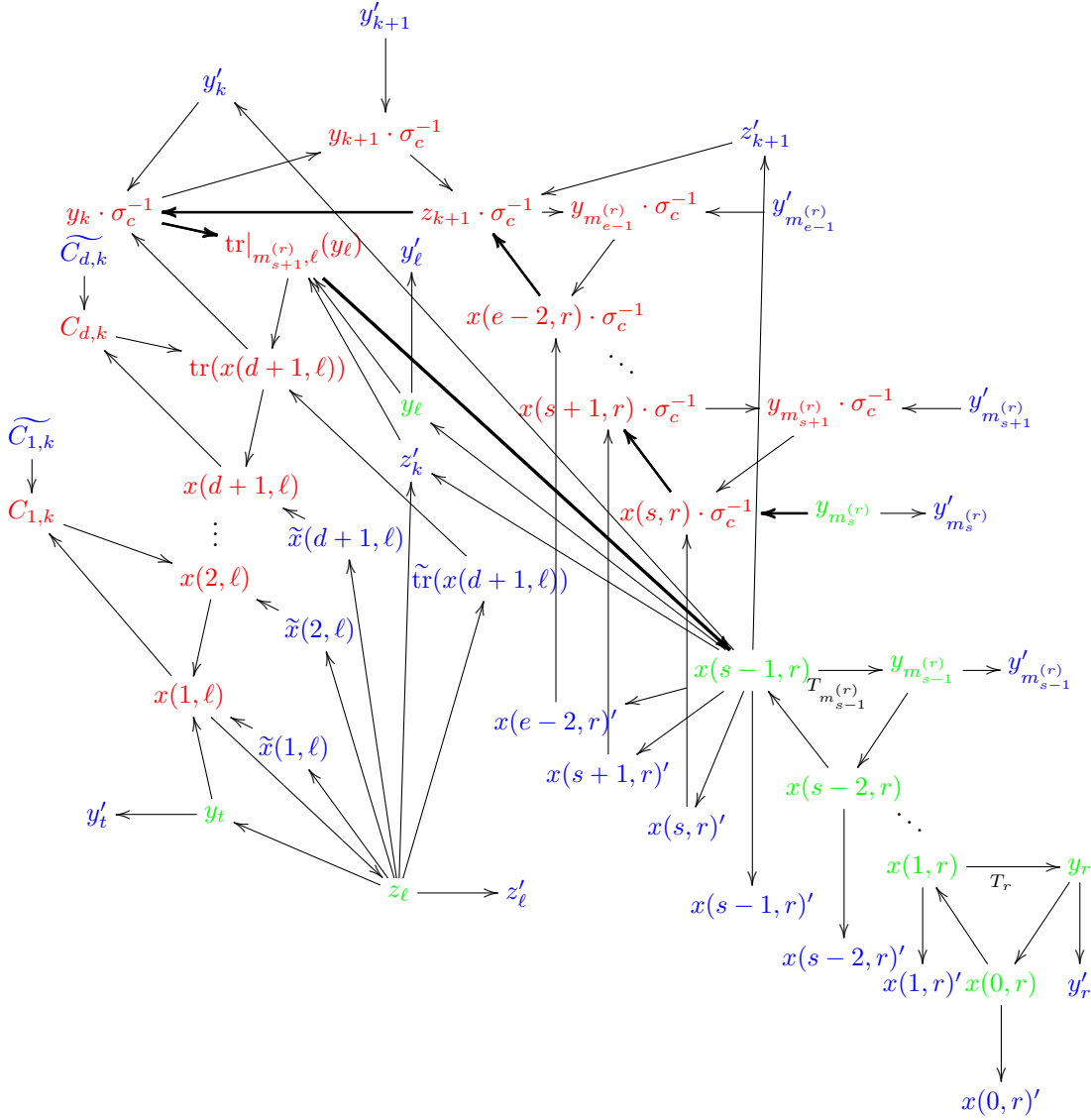
\begin{figure}
$$\begin{xy} 0;<1pt,0pt>:<0pt,-1pt>:: 
(-10,190) *+{\textcolor{red}{C_{1,k}}} ="0",%(0,145)
(60,215) *+{\textcolor{red}{x(2,\ell)}} ="1",%(70,170)
(60,195) *+{{\vdots}} ="47",
(20,75) *+{\textcolor{red}{y_k\cdot\sigma_c^{-1}}} ="46",%(20,115)
(70,180) *+{\textcolor{red}{x(d+1,\ell)}} ="60",
(10,120) *+{\textcolor{red}{C_{d,k}}} ="61",
(10,90) *+{\textcolor{blue}{\widetilde{C_{d,k}}}} ="62",
(110,200) *+{\textcolor{blue}{\widetilde{x}(d+1,\ell)}} ="63",
(60,25) *+{\textcolor{blue}{y_k^\prime}} ="48",%(20,45)
(210,150) *+{\textcolor{red}{x(s+1,r)\cdot\sigma_c^{-1}}} ="2",
(125,45) *+{\textcolor{red}{y_{k+1}\cdot\sigma_{c}^{-1}}} ="3",
(50,260) *+{\textcolor{red}{x(1,\ell)}} ="4",%(60,215)
(60,305) *+{\textcolor{green}{y_t}} ="50",
(15,305) *+{\textcolor{blue}{y_t^\prime}} ="51",
(190,115) *+{\textcolor{red}{x(e-2,r)\cdot\sigma_c^{-1}}} ="5",
(240,190) *+{\textcolor{red}{x(s,r)\cdot\sigma_c^{-1}}} ="6",
(220,75) *+{\textcolor{red}{y_{m^{(r)}_{e-1}}\cdot\sigma_{c}^{-1}}} ="7",
(160,75) *+{\textcolor{red}{z_{k+1}\cdot\sigma_{c}^{-1}}} ="8",
(215,130) *+{\ddots} ="9",
(90,90) *+{\textcolor{red}{\text{tr}|_{m^{(r)}_{s+1},\ell}(y_\ell)}} ="10",
(295,150) *+{\textcolor{red}{y_{m^{(r)}_{s+1}}\cdot\sigma_c^{-1}}} ="11",
(80,135) *+{\textcolor{red}{\text{tr}(x(d+1,\ell))}} ="12",%??\text{tr}|_{?,\ell}(y_\ell)
(135,150) *+{\textcolor{green}{y_\ell}} ="13",%(120,135)
(130,335) *+{\textcolor{green}{z_{\ell}}} ="14",%(155,315)
(-10,160) *+{\textcolor{blue}{\widetilde{C_{1,k}}}} ="15",%(0,95)
(100,235) *+{\textcolor{blue}{\widetilde{x}(2,\ell)}} ="16",%(110, 190)
(90,280) *+{\textcolor{blue}{\widetilde{x}(1,\ell)}} ="17",%(100,235)
(135,170) *+{\textcolor{blue}{z_{k}^\prime}} ="18",%(90,45)
(165,215) *+{\textcolor{blue}{\widetilde{\text{tr}}(x(d+1,\ell))}} ="19",%(155,185)
(125,0) *+{\textcolor{blue}{y_{k+1}^\prime}} ="20",
(270,45) *+{\textcolor{blue}{z_{k+1}^\prime}} ="21",%(160,45)
(285,75) *+{\textcolor{blue}{y_{m^{(r)}_{e-1}}^\prime}} ="22",
(190,270) *+{\textcolor{blue}{x(e-2,r)^\prime}} ="23",%(,190)
(210,290) *+{\textcolor{blue}{x(s+1,r)^\prime}} ="25", %(,235)
(360,150) *+{\textcolor{blue}{y_{m^{(r)}_{s+1}}^\prime}} ="26",
(240,310) *+{\textcolor{blue}{x(s,r)^\prime}} ="27",%(,275)
(135,90) *+{\textcolor{blue}{y_\ell^\prime}} ="28",%(155,170)
(175,335) *+{\textcolor{blue}{z_\ell^\prime}} ="29",
(265,250) *+{\textcolor{green}{x(s-1,r)}} ="30",
(300,295) *+{\textcolor{green}{x(s-2,r)}} ="32",
(330,250) *+{\textcolor{green}{y_{m^{(r)}_{s-1}}}} ="31",
(330,325) *+{\textcolor{green}{x(1,r)}} ="33",
(390,325) *+{\textcolor{green}{y_r}} ="34",
(360,370) *+{\textcolor{green}{x(0,r)}} ="35",
(300,190) *+{\textcolor{green}{y_{m^{(r)}_s}}} ="36",
(375,250) *+{\textcolor{blue}{y_{m^{(r)}_{s-1}}^\prime}} ="38",
(345,190) *+{\textcolor{blue}{y_{m^{(r)}_s}^\prime}} ="37",
(300,360) *+{\textcolor{blue}{x(s-2,r)^\prime}} ="39",%(,320)
(325,305) *+{\ddots} ="40",
(265,340) *+{\textcolor{blue}{x(s-1,r)^\prime}} ="41",%(,320)
(330,370) *+{\textcolor{blue}{x(1,r)^\prime}} ="42",
(390,370) *+{\textcolor{blue}{y_r^\prime}} ="43",
(360,415) *+{\textcolor{blue}{x(0,r)^\prime}} ="44",
"0", {\ar"1"},
"4", {\ar"0"},
"15", {\ar"0"},
"1", {\ar"4"},
"10", {\ar@<0.1mm>"30"},
"10", {\ar@<0.05mm>"30"},
"10", {\ar@<-0.05mm>"30"},
"10", {\ar@<-0.1mm>"30"},
"10", {\ar@<-0.15mm>"30"},
"10", {\ar@<0.15mm>"30"},
"10", {\ar"30"},
"16", {\ar"1"},
"6", {\ar@<0.1mm>"2"},
"6", {\ar@<0.05mm>"2"},
"6", {\ar@<-0.05mm>"2"},
"6", {\ar@<-0.1mm>"2"},
"6", {\ar"2"},
"2", {\ar"11"},
"25", {\ar"2"},
"3", {\ar"8"},
"20", {\ar"3"},
"4", {\ar"14"},
"17", {\ar"4"},
"7", {\ar"5"},
"5", {\ar@<0.1mm>"8"},
"5", {\ar@<0.05mm>"8"},
"5", {\ar@<-0.05mm>"8"},
"5", {\ar@<-0.1mm>"8"},
"5", {\ar"8"},
"23", {\ar"5"},
"11", {\ar"6"},
"27", {\ar"6"},
"8", {\ar"7"},
"8", {\ar@<0.1mm>"46"},
"8", {\ar@<0.05mm>"46"},
"8", {\ar@<-0.05mm>"46"},
"8", {\ar@<-0.1mm>"46"},
"8", {\ar"46"},
"22", {\ar"7"},
"21", {\ar"8"},
"10", {\ar"12"},
"18", {\ar"10"},
"26", {\ar"11"},
"12", {\ar"46"},
"12", {\ar"60"},
"13", {\ar"10"},
"19", {\ar"12"},
"13", {\ar"28"},
"14", {\ar"16"},
"14", {\ar"17"},
"14", {\ar"18"},
"14", {\ar"19"},
"14", {\ar"29"},
"14", {\ar"50"},
"14", {\ar"63"},
"30", {\ar_{T_{m^{(r)}_{s-1}}}"31"},
"30", {\ar"13"},
"30", {\ar"18"},
"30", {\ar"21"},
"30", {\ar"23"},
"30", {\ar"25"},
"30", {\ar"27"},
"30", {\ar"41"},
"30", {\ar"48"},
"31", {\ar"32"},
"31", {\ar"38"},
"32", {\ar"30"},
"32", {\ar"39"},
"33", {\ar_{T_r}"34"},
"33", {\ar"42"},
"34", {\ar"35"},
"34", {\ar"43"},
"35", {\ar"33"},
"35", {\ar"44"},
%"12", {\ar@<0.1mm>"60"},
%"12", {\ar@<0.05mm>"60"},
%"12", {\ar@<-0.05mm>"60"},
%"12", {\ar@<-0.1mm>"60"},
"36", {\ar@<0.1mm>"6"},
"36", {\ar@<0.05mm>"6"},
"36", {\ar@<-0.05mm>"6"},
"36", {\ar@<-0.1mm>"6"},
"36", {\ar"6"},
"36", {\ar"37"},
"46", {\ar@<0.1mm>"10"},
"46", {\ar@<0.05mm>"10"},
"46", {\ar@<-0.05mm>"10"},
"46", {\ar@<-0.1mm>"10"},
"46", {\ar"10"},
"46", {\ar"3"},
"48", {\ar"46"},
"50", {\ar"51"},
"50", {\ar"4"},
"60", {\ar"61"},
"62", {\ar"61"},
"63", {\ar"60"},
"61", {\ar"12"},
\end{xy}$$
\caption{The effect of applying $\underline{\mu}_c\circ \cdots \circ \underline{\mu}_{k+1}$ to $\underline{\mu}_{k}\circ \cdots \circ \underline{\mu}_1\circ \underline{\mu}_0(\widehat{Q})$ where $m^{(r)}_{s+1}\le c < m^{(r)}_s$. If $c = m^{(r)}_s-1$, the mutation sequence $\underline{\mu}_{m^{(r)}_s}$ is indicated by the bold arrows. Note that, as in the statement of Lemma~\ref{TRsigma_c}, we have that $\text{tr}|_{m^{(r)}_{s+1},\ell}(y_\ell) = z_k\cdot \sigma_c^{-1}$.} 
\label{scrshot6}
\end{figure}

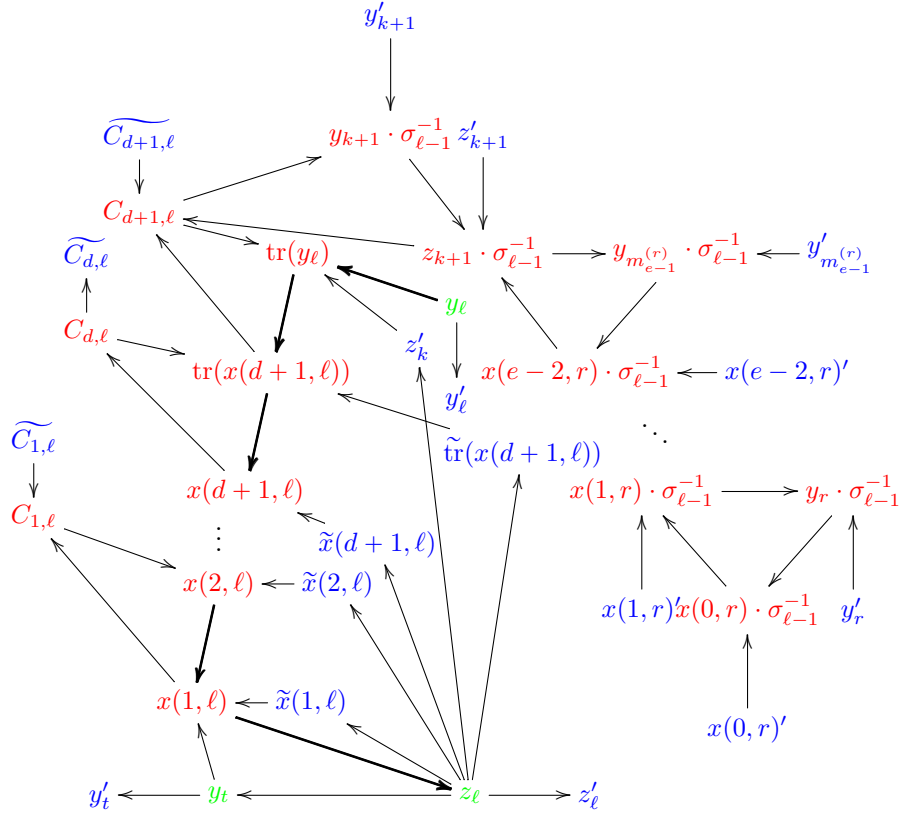
\begin{figure}
$$\begin{xy} 0;<1pt,0pt>:<0pt,-1pt>:: 
(120,200) *+{\textcolor{blue}{\widetilde{x}(d+1,\ell)}} ="63",
(10,90) *+{\textcolor{blue}{\widetilde{C_{d,\ell}}}} ="62",
(10,120) *+{\textcolor{red}{C_{d,\ell}}} ="61",
(70,180) *+{\textcolor{red}{x(d+1,\ell)}} ="60",
(-10,190) *+{\textcolor{red}{C_{1,\ell}}} ="0",%(0,195)
(60,215) *+{\textcolor{red}{x(2,\ell)}} ="1",
(60,195) *+{{\vdots}} ="47",
(30,75) *+{\textcolor{red}{C_{d+1,\ell}}} ="40",%(30,115)
(30,45) *+{\textcolor{blue}{\widetilde{C_{d+1,\ell}}}} ="48", %(30,65)
(220,180) *+{\textcolor{red}{x(1,r)\cdot\sigma_{\ell-1}^{-1}}} ="2",
(125,45) *+{\textcolor{red}{y_{k+1}\cdot\sigma_{\ell-1}^{-1}}} ="3",
(50,260) *+{\textcolor{red}{x(1,\ell)}} ="4",
(195,135) *+{\textcolor{red}{x(e-2,r)\cdot\sigma_{\ell-1}^{-1}}} ="5",
(260,225) *+{\textcolor{red}{x(0,r)\cdot\sigma_{\ell-1}^{-1}}} ="6",
(235,90) *+{\textcolor{red}{y_{m^{(r)}_{e-1}}\cdot\sigma_{\ell-1}^{-1}}} ="7",
(160,90) *+{\textcolor{red}{z_{k+1}\cdot\sigma_{\ell-1}^{-1}}} ="8",
(225,155) *+{\ddots} ="9",
(90,90) *+{\textcolor{red}{\text{tr}(y_\ell)}} ="10",
(300,180) *+{\textcolor{red}{y_r\cdot\sigma_{\ell-1}^{-1}}} ="11",
(80,135) *+{\textcolor{red}{\text{tr}(x(d+1,\ell))}} ="12",%\text{tr}(y_\ell)
(150,110) *+{\textcolor{green}{y_\ell}} ="13",
(155,295) *+{\textcolor{green}{z_{\ell}}} ="14",
(60,295) *+{\textcolor{green}{y_{t}}} ="49",
(15,295) *+{\textcolor{blue}{y_{t}^\prime}} ="50",
(-10,160) *+{\textcolor{blue}{\widetilde{C_{1,\ell}}}} ="15",%(0,145)
(105,215) *+{\textcolor{blue}{\widetilde{x}(2,\ell)}} ="16",
(95,260) *+{\textcolor{blue}{\widetilde{x}(1,\ell)}} ="17",
(135,125) *+{\textcolor{blue}{z_{k}^\prime}} ="18",%(90,45)
(175,165) *+{\textcolor{blue}{\widetilde{\text{tr}}(x(d+1,\ell))}} ="19",%(135,155)
(125,0) *+{\textcolor{blue}{y_{k+1}^\prime}} ="20",
(160,45) *+{\textcolor{blue}{z_{k+1}^\prime}} ="21",
(295,90) *+{\textcolor{blue}{y_{m^{(r)}_{e-1}}^\prime}} ="22",
(275,135) *+{\textcolor{blue}{x(e-2,r)^\prime}} ="23",
(220,225) *+{\textcolor{blue}{x(1,r)^\prime}} ="25",
(300,225) *+{\textcolor{blue}{y_r^\prime}} ="26",
(260,270) *+{\textcolor{blue}{x(0,r)^\prime}} ="27",
(150,145) *+{\textcolor{blue}{y_\ell^\prime}} ="28",
(200,295) *+{\textcolor{blue}{z_\ell^\prime}} ="29",
"0", {\ar"1"},
"4", {\ar"0"},
"15", {\ar"0"},
"1", {\ar"4"},
"1", {\ar@<0.1mm>"4"},
"1", {\ar@<0.05mm>"4"},
"1", {\ar@<-0.05mm>"4"},
"1", {\ar@<-0.1mm>"4"},
"12", {\ar"40"},
"12", {\ar"60"},
"12", {\ar@<0.1mm>"60"},
"12", {\ar@<0.05mm>"60"},
"12", {\ar@<-0.05mm>"60"},
"12", {\ar@<-0.1mm>"60"},
"16", {\ar"1"},
"6", {\ar"2"},
"2", {\ar"11"},
"25", {\ar"2"},
"3", {\ar"8"},
"20", {\ar"3"},
"4", {\ar@<0.1mm>"14"},
"4", {\ar@<0.05mm>"14"},
"4", {\ar@<-0.05mm>"14"},
"4", {\ar@<-0.1mm>"14"},
"4", {\ar"14"},%%
"17", {\ar"4"},
"7", {\ar"5"},
"5", {\ar"8"},
"23", {\ar"5"},
"11", {\ar"6"},
"27", {\ar"6"},
"8", {\ar"7"},
"22", {\ar"7"},
"8", {\ar"40"},
"21", {\ar"8"},
"10", {\ar@<0.1mm>"12"},
"10", {\ar@<0.05mm>"12"},
"10", {\ar@<-0.05mm>"12"},
"10", {\ar@<-0.1mm>"12"},
"10", {\ar"12"},
"18", {\ar"10"},
"26", {\ar"11"},
"13", {\ar@<0.1mm>"10"},
"13", {\ar@<0.05mm>"10"},
"13", {\ar@<-0.05mm>"10"},
"13", {\ar@<-0.1mm>"10"},
"13", {\ar"10"},
"19", {\ar"12"},
"13", {\ar"28"},
"14", {\ar"16"},
"14", {\ar"17"},
"14", {\ar"18"},
"14", {\ar"19"},
"14", {\ar"29"},
"14", {\ar"49"},
"14", {\ar"63"},
"40", {\ar"3"},
"63", {\ar"60"},
"48", {\ar"40"},
"40", {\ar"10"},
"49", {\ar"50"},
"49", {\ar"4"},
"60", {\ar"61"},
"61", {\ar"12"},
"61", {\ar"62"},
\end{xy}$$
\caption{The effect of applying $\underline{\mu}_{\ell-1}\circ \cdots \circ \underline{\mu}_{k+1}$ to $\underline{\mu}_{k}\circ \cdots \circ \underline{\mu}_1\circ \underline{\mu}_0(\widehat{Q})$ where $C_{d+1,\ell} = y_k\cdot\sigma_{\ell-1}^{-1}$ and $\widetilde{C_{d+1,\ell}} = y_k^\prime$, as desired. The mutation sequence $\underline{\mu}_\ell$ is indicated by the bold arrows.}%Based on screen shot 8 %\textcolor{blue}{MOVE $y_\ell'$ AND ADD ARROW $z_\ell \to z_k^\prime$}
\label{scrshot8}
\end{figure}

\section{Additional Questions and Remarks} \label{Sec:Conc}
In this section, we give an example to show how our results provide explicit maximal green sequences for quivers that are not of type $\mathbb{A}$. We also discuss ideas we have for further research.

\subsection{Maximal Green Sequences for {Quivers} Arising from Surface Triangulations} \label{Sec:OtherSurf}

The following example shows how our formulas for maximal green sequences for type $\mathbb{A}$ quivers can be used to give explicit formulas for maximal green sequences for {quivers arising from other types of triangulated surfaces}.

\begin{example}\label{apptosurf}
Consider the marked surface $(\textbf{S}, \textbf{M})$ with the triangulation given as $\textbf{T}$ shown in Figure~\ref{pantstriang} on the left. The surface \textbf{S} is a once-punctured pair of pants with triangulation $$\textbf{T} = \textbf{T}_1\sqcup \textbf{T}_2 \sqcup \{\eta, \epsilon, \zeta\}$$ where $\alpha_1,\alpha_2, \alpha_3 \in \textbf{T}_1$ and $\beta_1, \beta_2, \beta_3, \nu \in \textbf{T}_2$. We assume that the boundary arcs $b_i$ with $i \in [5]$ contain no marked points except for those shown in Figure~\ref{pantstriang}. The other boundary arcs may contain any number of marked points. As in Section~\ref{Sec:SurfTriang}, let $Q_\textbf{T}$ be the quiver determined by $\textbf{T}$ and let $v_\delta \in (Q)_0$ denote the vertex corresponding to arc $\delta \in \textbf{T}.$

\begin{figure}[h]
$$\includegraphics[scale=1.3]{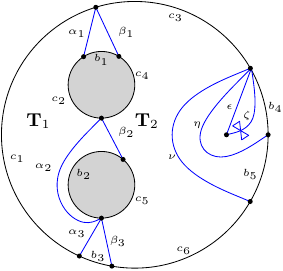} \ \ \ \includegraphics[scale=1.3]{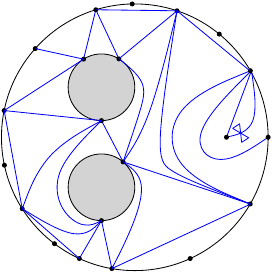}$$
\caption{}
\label{pantstriang}
\end{figure}

We can think of the marked surface $(\textbf{S}_1,\textbf{M}_1)$ determined by $c_1, \beta_1, b_1, c_2, \beta_2, b_2, \beta_3, b_3$ as an $m_1$-gon where $m_1 = \#\textbf{M}_1$ and we can think of $\textbf{T}_1$ as a triangulation of $\textbf{S}_1.$ Similarly, we can think of the marked surface $(\textbf{S}_2,\textbf{M}_2)$ determined by $\alpha_1, c_3, \eta, b_5, c_6, \alpha_3, c_5, b_2, \alpha_2, c_4, b_1$ as an $m_2$-gon where $m_2 = \#\textbf{M}_2$ and we can think of $\textbf{T}_2$ as a triangulation of $\textbf{S}_2$. Thus the quiver $Q_{\textbf{T}_i}$, determined by $\textbf{T}_i$, is a type $\mathbb{A}$ quiver for $i = 1,2.$ Furthermore, we have $$Q_{\textbf{T}} = Q_{\textbf{T}_1}\oplus_{(v_{\alpha_1},v_{\alpha_2},v_{\alpha_3})}^{(v_{\beta_1},v_{\beta_2}, v_{\beta_3})}Q_{\textbf{T}_2}\oplus_{(v_\nu)}^{(v_\eta)}R$$
where
$$\begin{array}{rclll}
\raisebox{-.41in}{$R$} & \raisebox{-.41in}{=} & \begin{xy} 0;<1pt,0pt>:<0pt,-1pt>:: 
(0,30) *+{v_\eta} ="0",
(45,60) *+{v_\epsilon.} ="1",
(45,0) *+{v_{\zeta}} ="2",
"0", {\ar"1"},
"0", {\ar"2"},
\end{xy}
\end{array}$$
By Corollary~\ref{anytypeA}, $Q_{\textbf{T}_1}$ and $Q_{\textbf{T}_2}$ each have a maximal green sequence $\underline{\mu}^{Q_{\textbf{T}_i}}$ for $i = 1,2.$ Since $R$ is acyclic, we can define $\underline{\mu}^{R}$ to be any mutation sequence of $\widehat{R}$ where each mutation occurs at a source (for instance, put $\underline{\mu}^{R} = \mu_{v_{\epsilon}}\circ\mu_{v_{\zeta}}\circ\mu_{v_{\eta}}$). Then $\underline{\mu}^R$ is clearly a maximal green sequence of $R$. Now Theorem~\ref{tcolordirsum} implies that $\underline{\mu}^R\circ\underline{\mu}^{Q_{\textbf{T}_2}}\circ\underline{\mu}^{Q_{\textbf{T}_1}}$ is a maximal green sequence of $Q_{\textbf{T}}$. %As in Sections \ref{Sec:Irr} and \ref{Sec:general},

Suppose that $\textbf{T}_1$ and $\textbf{T}_2$ are given by the triangulations shown in Figure~\ref{pantstriang} on the right. Then we have that $Q_{\textbf{T}_1}$ and $Q_{\textbf{T}_2}$ are the quivers shown in Figure~\ref{QT1QT2} where we think of the irreducible parts of $Q_{\textbf{T}_1}$ and $Q_{\textbf{T}_2}$ as signed irreducible type $\mathbb{A}$ quivers with respect to the root 3-cycles $S^{(1)}_1$ and $S^{(2)}_1$, respectively. In this situation, $Q_{\textbf{T}_1}$ and $Q_{\textbf{T}_2}$ have the maximal green sequences 
 \begin{eqnarray}
 \underline{\mu}^{Q_{\textbf{T}_1}} & = & \mu_{w_1}\circ \mu_{w_2}\circ\mu_{v_{\alpha_1}}\circ\underline{\mu}^{(1)}_3\circ\underline{\mu}^{(1)}_2\circ\underline{\mu}^{(1)}_1\circ\underline{\mu}^{(1)}_0 \nonumber \\
\underline{\mu}^{Q_{\textbf{T}_2}} & = & \mu_{w_4}\circ \mu_{w_3}\circ\underline{\mu}^{(2)}_5\circ\underline{\mu}^{(2)}_4\circ\underline{\mu}^{(2)}_3\circ\underline{\mu}^{(2)}_2\circ\underline{\mu}^{(2)}_1\circ\underline{\mu}^{(2)}_0\nonumber\end{eqnarray}
respectively where 
$$\begin{array}{rllll}
\begin{array}{rllll}
\underline{\mu}^{(1)}_0 & = & \mu_{x^{(1)}_1} \nonumber \\
\underline{\mu}^{(1)}_1 & = & \mu_{x^{(1)}_1}\circ \mu_{z^{(1)}_1}\circ \mu_{y^{(1)}_1}\nonumber \\
\underline{\mu}^{(1)}_2 & = & \mu_{x^{(1)}_1}\circ \mu_{z^{(1)}_1}\circ \mu_{z^{(1)}_2}\circ \mu_{y^{(1)}_2}\nonumber \\
\underline{\mu}^{(1)}_3 & = & \mu_{y^{(1)}_2}\circ \mu_{x^{(1)}_1}\circ \mu_{z^{(1)}_3}\circ \mu_{y^{(1)}_3}\nonumber
\end{array}
& \text{ } &  \begin{array}{rllll}
 \underline{\mu}^{(2)}_0 & = & \mu_{x^{(2)}_1} \nonumber \\
 \underline{\mu}^{(2)}_1 & = & \mu_{x^{(2)}_1}\circ \mu_{z^{(2)}_1}\circ \mu_{y^{(2)}_1}\nonumber \\
 \underline{\mu}^{(2)}_2 & = & \mu_{x^{(2)}_1}\circ \mu_{z^{(2)}_1}\circ \mu_{z^{(2)}_2}\circ \mu_{y^{(2)}_2}\nonumber \\
 \underline{\mu}^{(2)}_3 & = & \mu_{y^{(2)}_2}\circ \mu_{x^{(2)}_1}\circ \mu_{z^{(2)}_3}\circ \mu_{y^{(2)}_3}\nonumber \\
 \underline{\mu}^{(2)}_4 & = & \mu_{y^{(2)}_3}\circ \mu_{y^{(2)}_2}\circ\mu_{z^{(2)}_4}\circ\mu_{y^{(2)}_4} \nonumber \\
 \underline{\mu}^{(2)}_5 & = & \mu_{z^{(2)}_4}\circ\mu_{x^{(2)}_1}\circ\mu_{z^{(2)}_3}\circ\mu_{z^{(2)}_5}\circ\mu_{y^{(2)}_5}. \nonumber
 \end{array} 
\end{array}$$
and $\underline{\mu}^R\circ \underline{\mu}^{Q_{\textbf{T}_2}}\circ\underline{\mu}^{Q_{\textbf{T}_1}}$ is a maximal green sequence of $Q_{\textbf{T}}$. In general, if we have a quiver $Q_{\textbf{T}}$ that can be realized as a direct sum of type $\mathbb{A}$ quivers and acyclic quivers, we can write an explicit formula for a maximal green sequence of $Q_{\textbf{T}}$.

\begin{figure}[h]
$$\begin{array}{rclll}
\raisebox{-.41in}{$Q_{\textbf{T}_1}$} & \raisebox{-.41in}{=} &\begin{xy} 0;<1pt,0pt>:<0pt,-1pt>:: 
(30,0) *+{y^{(1)}_1} ="0",
(0,30) *+{x^{(1)}_1} ="1",
(60,30) *+{z^{(1)}_1} ="2",
(120,30) *+{y^{(1)}_2} ="3",
(90,60) *+{z^{(1)}_2} ="4",
(150,0) *+{y^{(1)}_3} ="5",
(180,30) *+{z^{(1)}_3} ="6",
(240,30) *+{w_1} ="7",
(300,30) *+{w_2} ="8",
(360,30) *+{v_{\alpha_1}} ="9",
"1", {\ar"0"},
"0", {\ar"2"},
"2", {\ar_{T^{(1)}_1}"1"},
"2", {\ar_{T^{(1)}_2}"3"},
"4", {\ar"2"},
"3", {\ar"4"},
"3", {\ar"5"},
"6", {\ar_{T^{(1)}_3}"3"},
"5", {\ar"6"},
"6", {\ar"7"},
"8", {\ar"7"},
"9", {\ar"8"},
\end{xy}\end{array}$$
$$\begin{array}{rclll}
\raisebox{-.84in}{$Q_{\textbf{T}_2}$} & \raisebox{-.84in}{=} &\begin{xy} 0;<1pt,0pt>:<0pt,-1pt>:: 
(30,30) *+{y^{(2)}_1} ="0",
(0,60) *+{x^{(2)}_1} ="1",
(60,60) *+{z^{(2)}_1} ="2",
(120,60) *+{y^{(2)}_2} ="3",
(90,90) *+{z^{(2)}_2} ="4",
(150,30) *+{y^{(2)}_3} ="5",
(180,60) *+{z^{(2)}_3} ="6",
(240,60) *+{y^{(2)}_5} ="7",
(150,90) *+{w_3} ="8",
(210,90) *+{z^{(2)}_5} ="9",
(270,90) *+{w_4} ="10",
(180,0) *+{y^{(2)}_4} ="11",
(210,30) *+{z^{(2)}_4} ="12",
"1", {\ar"0"},
"0", {\ar"2"},
"2", {\ar_{T^{(2)}_1}"1"},
"2", {\ar_{T^{(2)}_2}"3"},
"4", {\ar"2"},
"3", {\ar"4"},
"3", {\ar"5"},
"6", {\ar_{T^{(2)}_3}"3"},
"4", {\ar"8"},
"5", {\ar"6"},
"5", {\ar"11"},
"12", {\ar_{T^{(2)}_4}"5"},
"6", {\ar_{T^{(2)}_5}"7"},
"9", {\ar"6"},
"7", {\ar"9"},
"9", {\ar"10"},
"11", {\ar"12"},
\end{xy}\end{array}$$
\caption{}
\label{QT1QT2}
\end{figure}

\end{example}

\begin{problem}\label{clusteralgfromsurfaces}
Find explicit formulas for maximal green sequences for quivers arising from triangulations of surfaces.
\end{problem}

Using Corollary~\ref{cor:tsurf}, we can reduce Problem~\ref{clusteralgfromsurfaces} to the problem of finding explicit formulas for maximal green sequences of irreducible quivers that arise from a triangulated surface. In \cite{ACCERV}, the authors sketch an argument showing the existence of maximal green sequences for quivers arising from triangulated surfaces. However, we would like to prove the existence of maximal green sequences by giving explicit formulas for maximal green sequences of such quivers.

Some progress has already been made in answering Problem~\ref{clusteralgfromsurfaces}. In \cite{L}, Ladkani shows that quivers arising from triangulations of once-punctured closed surfaces of genus $g \ge 1$ have no maximal green sequences. In \cite{Bucher, BucherMills}, explicit formulas for maximal green sequences are given for specific triangulations of closed genus $g \ge 1$ surfaces. In \cite{CDRSW}, a formula is given for the minimal length maximal green sequences of quivers defined by polygon triangulations. It would be interesting to understand, in general, what are the possible lengths that can be achieved by maximal green sequences of a given quiver.

\subsection{Trees of Cycles}

Our study of signed irreducible type $\mathbb{A}$ quivers was made possible by the fact that such quivers are equivalent to labeled binary trees of $3$-cycles (see Lemma~\ref{binarytreelemma}). It is therefore reasonable to ask if one can find explicit formulas for maximal green sequences of quivers that are trees of cycles where each cycle has length at least $k \ge 3$. In our construction, we define a total ordering and a sign function on the set of 3-cycles of an irreducible type $\mathbb{A}$ quiver (with at least one 3-cycle), and this data was important in discovering and describing the associated mutation sequence. One could use a similar technique to construct an analogue of the associated mutation sequence for quivers that are trees of oriented cycles.

\begin{problem}
Find a construction of maximal green sequences for quivers that are trees of oriented cycles.
\end{problem}

\subsection{Enumeration of Maximal Green Sequences}
\label{Sec:Enum} 

In the process of revising this paper, the problems posed in this section have been solved. For posterity, we keep this section as it appeared in the original {\tt arXiv} version. A solution to Problem~\ref{polygonal} appears in \cite{garver2015lattice}. A solution to Problem~\ref{minlengthmgs} appears in \cite{CDRSW}.

\vspace{1em}

For a given signed irreducible type $\mathbb{A}$ quiver $\mathcal{Q}$ with root 3-cycle $T$ and with at least two 3-cycles our construction produces a maximal green sequence $\underline{\mu} = \underline{\mu}(T)$ of $\mathcal{Q}$ for each leaf 3-cycle in $\mathcal{Q}$. It would be interesting to see how many maximal green sequences of ${Q}$ can be obtained from the maximal green sequences $\underline{\mu}$ as the choice of the root 3-cycle $T$ varies.

\begin{problem}\label{polygonal}
Determine what maximal green sequences of $\mathcal{Q}$ can be obtained via commutation relations and Pentagon Identity relations applied to the maximal green sequences in $\{\underline{\mu}(T): T \ \text{is a leaf 3-cycle}\}.$
\end{problem}

Additionally, in \cite{BDP} there are several tables giving the number of maximal green sequences of certain {small rank} quivers by length. These computations may be useful for making progress on the problem of enumerating maximal green sequences of quivers.

As discussed in Remark~\ref{mubarlength}, the associated mutation sequences constructed here are not necessarily the shortest possible maximal green sequences. This motivates the following problem.

\begin{problem}\label{minlengthmgs}
Provide a construction of the maximal green sequences of minimal length, {possibly} by showing how to apply Pentagon Identity relations to the associated mutation sequences.
\end{problem}

\subsection{Further Study of Maximal Green Sequences} \label{Sec:A3}

Note that maximal green sequences of a quiver $Q$ can be thought of as maximal chains (from the unique source to the unique sink) in the \emph{oriented exchange graph} \cite[Section 2]{BDP}.  In the case that $Q$ is of type $\mathbb{A}$, the exchange graph is an orientation of the $1$-skeleton of the associahedron.  The oriented exchange graph is especially nice in the case when $Q$ is a \textbf{Dynkin quiver} (i.e. an acyclic orientation of a Dynkin diagram of type $\mathbb{A},$ $\mathbb{D}$, or $\mathbb{E}$).  For example, it is the Hasse graph of the Tamari lattice in the case $Q$ is linear and equioriented and it is the Hasse graph of a Cambrian lattice (in the sense of Reading \cite{Read}) as is noted in \cite[Section 3]{K3}.  In particular, this means that we consider the finite Coxeter group $G$ whose Dynkin Diagram is the unoriented version of $Q$ and a choice of Coxeter element $c$ compatible with the orientation of $Q$ and then maximal green sequences are in bijection with maximal chains in the Cambrian lattice, a quotient of the weak Bruhat order on $G$.  Note, that this bijection is studied further in \cite{Qiu} where each $c$-sortable word is shown to correspond to a green sequence.

To indicate the difficulty of describing the set of maximal green sequences once we consider quivers with cycles, we focus on the $A_3$ case here.  In the case where $Q$ is a $3$-cycle (with $1 \to 2$, $2 \to 3$, $3 \to 1$), there is not a corresponding Cambrian congruence that one can apply to the weak Bruhat order on the symmetric group $G = S_4$ to obtain the desired Hasse diagram.  In particular, the corresponding Cambrian lattice is constructed from the geometry of the affine $\tilde{A}_2$ root system instead of from a finite Coxeter group.  Intersecting this coarsening of the Coxeter Lattice with the Tits Cone yields 11 regions rather than the 14 we obtain in the acyclic case \cite{ReadSpey}.

Nonetheless, we can still compute maximal green sequences in this case, and see that they are indeed the set of oriented paths through a certain orientation of the $1$-skeleton of the associahedron.  There are six possible maximal green sequences of length $4$: 
$\mu_1\circ\mu_3\circ\mu_2\circ\mu_1$,~$\mu_2\circ\mu_1\circ\mu_3\circ\mu_2$, $\mu_3\circ\mu_2\circ\mu_1\circ\mu_3$,~
$\mu_3\circ\mu_1\circ\mu_2\circ\mu_1$,~ $\mu_1\circ\mu_2\circ\mu_3\circ\mu_2$,~ and $\mu_3\circ\mu_2\circ\mu_1\circ \mu_3$. We can find three more maximal green sequences of length $5$: $\mu_2\circ\mu_3\circ\mu_2\circ\mu_1\circ\mu_2$,~ $\mu_3\circ\mu_1\circ\mu_3\circ\mu_2\circ\mu_3$,~ and
$\mu_1\circ\mu_2\circ\mu_1\circ\mu_3\circ\mu_1$.  As in Figure 22 of \cite{BDP}, there are no other maximal green sequences of this quiver. To obtain these sequences of length 5, we select any of the first three maximal green sequences of length 4. We then apply the relation $\mu_{i+1} \circ \mu_{i} \sim \mu_{i+1} \circ \mu_i \circ \mu_{i+1}$ (where the arithmetic is carried out mod 3) and apply the vertex permutation $(i,i+1)$ to the vertices at which one mutates later in the sequence.

In an attempt to understand this example in terms of the Coxeter group of type $A_3$, i.e. $S_4$, we consider the presentation described in \cite{BarotMarsh} for quivers with cycles.  In this case, if we let $s_1 = (14)$, $s_2 = (24)$, $s_3 = (34)$, we obtain 
$$S_4 = \langle s_1, s_2, s_3: s_1^2=s_2^2=s_3^2 = (s_1s_2)^3 = (s_2s_3)^3 = (s_3s_1)^3 = (s_1s_2s_3s_2)^2=1\rangle.$$
Unlike the acyclic $A_3$ case where the permutation in $S_4$ corresponding to the longest word, i.e. $4321$, is the only element of $S_4$ whose length as a reduced expression, e.g. $s_1s_2s_3s_1s_2s_3$, is of length $6$, in the Barot-Marsh presentation, the permutations 
$4321$, $3412$, $2143$, $1342$, and $1432$ all have reduced expressions of maximal length, namely $4$.  

Further, if we visualize the order complex of $S_4$ under this presentation, we obtain a torus  (see Example 3.1 of \cite{BabR}) rather than a simply-connected surface like the acyclic case and there are no permutations with reduced expression of length $5$, hence reduced expressions in this presentation cannot correspond to maximal green sequences.  We thank Vic Reiner for bringing his paper with Eric Babson to our attention.  Hence, understanding the full collection of maximal green sequences for other quivers with cycles, even those of type $\mathbb{A}$, appears to require more than an understanding of the associated Coxeter groups.

\vspace{1em}

Since our original preprint, in recent work \cite{garver2015lattice} (resp. \cite{garver2015orientedflip}) of the first author and Thomas McConville, an analgoue of the weak order on $S_n$, known as the lattice of \textbf{biclosed subcatgories} (resp. \textbf{biclosed sets} of \textbf{segments}) is constructed. In \cite{garver2015lattice} and \cite{garver2015orientedflip}, it is shown that the oriented exchange graph of quivers of type $\mathbb{A}$ can be obtained as a lattice quotient of these. This lattice congruence generalizes the Cambrian congruence in type $\mathbb{A}$. Additionally, the results in \cite{garver2015lattice} and \cite{garver2015orientedflip} do require techniques from representation theory of finite dimensional algebras, which further suggests that understanding the full collection of maximal green sequences for quivers with cycles may require more than elementary techniques.

%NEW
\bibliographystyle{plain}
\bibliography{bib_mgs}

\end{document}